\definecolor{rosso}{rgb}{1,0,0}
\def\pier #1{#1}
\def\pcol #1{#1}
\newcommand{\texorpdfstring}[2]{#1}
\newcommand{\myboxed}[1]{#1}
\newcommand{\important}[1]{#1}
\newcommand{\marginnote}[1]{}
\newcommand{\AMSsubject}[2]{#1}
\numberwithin{equation}{section}
\theoremstyle{definition}
\newtheorem{defn}{Definition}
\numberwithin{defn}{section}
\newtheorem*{defn*}{Definition}
\newtheorem{prop}[defn]{Proposition}
\newtheorem{lem}[defn]{Lemma}
\newtheorem{cor}[defn]{Corollary}
\newtheorem{thrm}[defn]{Theorem}
\theoremstyle{remark}
\newtheorem{rmrk}[defn]{Remark}
\newtheoremstyle{examples}%
{}{}{}%
{}{\bfseries}%
{}{\newline}{}
\theoremstyle{examples}
\newtheorem*{exmps*}{Examples}
\newcommand{\de}{\mathrm{d}}
\newcommand{\natr}{\mathbb{N}}
\newcommand{\real}{\mathbb{R}}
\newcommand*\Laplace{\mathop{}%
  \!%
  \mathbin\bigtriangleup}
\newcommand{\norm}[1]{\lVert#1\rVert}
\DeclareMathOperator\Span{span}
\DeclareMathOperator\Sign{Sign}
\DeclareMathOperator\divergence{div}
\newcommand{\quadd}{\quad\quad}
\newcommand{\problem}[1]{{\em Problem}~(#1)}
\title{}
\author{}
\date{}
\begin{document}
\maketitle
\begin{center}
  \Large \bf Sliding mode control for a generalization of the Caginalp
  phase-field system
\end{center}
\begin{center}
  \large \sc Pierluigi Colli and Davide Manini 
\end{center}
\begin{center}
  \large \em Dipartimento di Matematica ``F. Casorati'', Universit\`a di Pavia
  
  Via Ferrata, 1, 27100 Pavia, Italy

  \vskip2mm
  \normalsize
  Email: {\tt pierluigi.colli@unipv.it\quad
    davide.manini01@universitadipavia.it}
\end{center}



\begin{abstract}
\noindent
In the present paper, we present and solve the sliding mode control
(SMC) problem for a second-order generalization of the Caginalp
phase-field system.
\pier{This} generalization, \pier{inspired by the theories developed by Green and Naghdi
on one side, and Podio-Gui\-du\-gli on the other,}
deals with the concept of thermal displacement, i.e.\pier{,} a primitive with
respect to the time of the temperature.
Two control laws are considered: the former forces the solution to
reach a sliding manifold described by a linear constraint between the
temperature and the phase variable; the latter forces the phase
variable to reach a prescribed distribution $\varphi^*$.
We prove existence, uniqueness as well as continuous dependence of the
solutions for both problems; two regularity results are also given.
We also prove that, under suitable conditions, the solutions reach the
sliding manifold within finite time.

\vskip3mm
\noindent
{\bf Key words.}\quad
phase field system,
nonlinear boundary value problems,
phase transition,
sliding mode control,
state-feedback control law.

\vskip3mm
\noindent
{\bf AMS subject classification.}\quad
\AMSsubject{35K55}{Nonlinear parabolic equations},
\AMSsubject{35B30}{Dependence of solutions on initial and boundary data, parameters},
\AMSsubject{80A22}{Stefan problems, phase changes, etc.},
\AMSsubject{34H05}{Control problems},
\AMSsubject{93B52}{Feedback control}.
\end{abstract}

\markboth{}{}

\section{Introduction}
For many years, sliding mode control (SMC) has been recognized as one
of the best approaches for the design of robust controllers
for nonlinear dynamical systems.
Nowadays, SMC is considered a standard tool for the regulation of
time-evolving systems in finite dimension
\cite{bfpu08,eff06,fji11,utkin92,young-ozguner93}.\marginnote{migliorare biblio?}

The design of feedback control
systems with sliding modes involves the construction of suitable control
functions enforcing motion along a given manifold of lower dimension,
called {\em sliding manifold}.
The main
idea is (i) to identify this manifold where the control target is
fulfilled and such that the original system restricted to this sliding
manifold has a desired behavior; (ii)  to act on the system through
a suitable control term in order to constrain the evolution on it.
This new term forces the trajectories of the system to reach
the sliding manifold and maintains them along it.

Sliding mode controls feature robustness with respect to unmodelled
dynamics and insensitivity to external disturbances.
At the same time they are relatively easy to design.
For these reasons, in the last years there has been a growing interest
in bringing these methods for finite-dimensional systems described by
ODEs \cite{levaggi02, orlov00, orlov-utkin83} to the realm of PDEs.
%
While certain early works going in this direction
\cite{orlov-utkin83,orlov-utkin87,orlov-utkin98} deal with particular
classes of PDEs, the theoretical development in a general Hilbert
space setting has gained attention only in the last years
\cite{crs11,levaggi13,xlgk13}. \pcol{We also point out the article~\cite{bcgmr}, 
mainly related to the contents of this paper, and the subsequent contributions \cite{colturato, cgmr, cem}, also dealing with systems of PDEs.} 
%

%
In this paper, the considered system describes the spatial and time fluctuations
close to a phase transition.
In order to take into account the effects of phase dissipation, Caginalp
introduced~\cite{caginalp86} a phase-field system consisting of the
following equations
\begin{alignat}{1}
  &(\vartheta+l\varphi)_t-\kappa\Laplace\vartheta
  =f,\label{eq:heat-caginalp}
  \quadd\text{ in }\Omega\times(0,T),
  \\
  &\varphi_t-\Laplace\varphi +F'(\varphi)=\gamma\vartheta,
  \quadd\text{ in }\Omega\times(0,T).
  \label{eq:phase-caginalp}
\end{alignat}
\pier{Here,} $\Omega\subset\real^N$ represents the spatial domain where the
evolution takes place and $T>0$ is the final time of the evolution.
While the case $N=3$ is the one of physical interest, we will carry
out our analysis for all $N$.
A usual choice, at least for the phase variable $\varphi$, is to
complement the equations with standard homogeneous Neumann boundary
conditions $\partial_n\varphi=\partial_n\vartheta=0$ on
$\partial\Omega\pier{{}\times(0,T)}$, plus the initial conditions
$\vartheta(\cdot,0)=\vartheta_0$ and $\varphi(\cdot,0)=\varphi_0$.

The variable $\vartheta$ represents the relative temperature,
i.e.\ the difference between the actual temperature and the fixed
critical temperature for the phase transition.
The variable $\varphi$ has the meaning of a phase parameter:
$\varphi<0$ indicates one of the two phases; $\varphi>0$ indicates the
other phase\pier{;}
$\varphi(x,t)=0$ usually indicates that position $x$ is at the interface
between the two phases at time $t$.
$F'$ is the
derivative of a double-well potential $F$.
A few examples for the double-well potential $F$
are\marginnote{aggiungerne altre}
\begin{align}
  F(r)&=\frac{1}{4}(r^2-1)^2,
  \\
  \label{eq:logar}
  F(r)&=
  \begin{cases}
    (1+r)\log(1+r)+(1-r)\log(1-r)-(c_0+1)r^2,&\text{ if }|r|<1,\\
    \pier{2\log 2 - c_0 - 1},&\pier{\text{ if }|r|=1,}\\    
    +\infty&\text{ otherwise,}
  \end{cases}
  \\
  \label{eq:potential-indicator}
  F(r)&=
  \begin{cases}
    -c_0r^2,&\text{ if }|r|\leq1,\\
    +\infty,&\text{ otherwise,}
  \end{cases}  
\end{align}
where $c_0\in\real$, $c_0>0$. \pier{Please note that the potential in 
\eqref{eq:logar} has a derivative in $(-1,1)$ becoming singular as $r$ approaches $-1$ or $1$, 
while the potential \eqref{eq:potential-indicator} is non-smooth so that part of its derivative should be replaced by the subdifferential of the indicator function of $[-1,1]$.}

%

The physical equations originating the
system~\eqref{eq:heat-caginalp}--\eqref{eq:phase-caginalp}
are\footnote{Please note that in this paper we will use both the
  notations $p_t$ and $\partial_tp$ to denote the derivative of a
  function $p$.}
\begin{align}
  &\partial_t e+\divergence\mathbf q = \tilde f,
  \label{eq:balance}
  \\&
  \partial_t \varphi+\frac{\delta \mathscr F}{\delta \varphi}=0,
  \label{eq:phase-variational}
\end{align}
where $e$ denotes the internal energy, $\mathbf q$ the thermal flux,
and $\tilde f$ the heat source.
The term $\frac{\delta \mathscr F}{\delta \varphi}$ represents the
variational derivative with respect to $\varphi$ of the following
functional
\begin{equation}
  \mathscr F(\vartheta,\varphi)=
  \int_\Omega\left(-\frac{c_0}{2}\vartheta^2-\gamma\vartheta\varphi
  +F(\varphi)+\frac{1}{2}|\nabla\varphi|^2\right),
\end{equation}
where the constants $c_0$ and $\gamma$ represent the specific heat and
the latent heat coefficient, respectively.
Note that the term $-\gamma\vartheta\varphi$ favors states having
concordant relative temperature and phase variable.
The internal energy $e$ \pier{may} be derived from the functional $\mathscr
F$, taking minus its variational derivative with respect to
$\vartheta$, i.e.,
\begin{equation}
  e=-\frac{\delta\mathscr F}{\delta \vartheta}
  =c_0\vartheta+\gamma\varphi.
\end{equation}
Equation~\eqref{eq:phase-variational} yields
equation~\eqref{eq:phase-caginalp} by standard variational derivative
taking into account the homogeneous Neumann condition for $\varphi$.
We set $l:=\gamma/c_0$ and $f:=\tilde f/c_0$.
If we assume the classic Fourier law
\begin{equation}
  \label{eq:fourier}
\mathbf q=-c_0\kappa\nabla\vartheta,
\end{equation}
equation~\eqref{eq:balance}
yields~\eqref{eq:heat-caginalp}.
The homogeneous Neumann condition for $\vartheta$ follows from the
no-flux condition $\mathbf q\cdot \mathbf n=0$ on the boundary of
$\Omega$.

A sliding-mode analysis has been carried out recently for the system described
by the equations~\eqref{eq:heat-caginalp}--\eqref{eq:phase-caginalp}
\cite{bcgmr}.
In the quoted paper three cases are taken into consideration (labeled
as Problem A--C).
In Problem A, the sliding manifold is given by a linear constraint
between $\vartheta$ and $\varphi$; in Problems B and C the phase
$\varphi$ is forced to reach a prescribed phase distribution
$\varphi^*$.
While in Problems A and B the control law is non-local in the spatial
variable, in Problem C the control term is fully local. 

In the present paper we carry out a similar sliding-mode analysis,
for modified equations where the Fourier law~\eqref{eq:fourier} is
generalized in the light of the works by Green and Naghdi
\cite{green-naghdi91,green-naghdi92,green-naghdi93} and (more
recently) by Podio-Guidugli~\cite{podio-guidugli09}\pier{, referring to Thermodynamics}.
%
These papers introduced the notion of {\em thermal displacement},
which is a primitive of the temperature, i.e.
\begin{equation}
  w(x,t)=w_0(x)+\int_0^t\vartheta(x,s)\de s,
\end{equation}
where $w_0$ represents a given datum accounting for a possible previous
thermal history of the phenomenon.
\marginnote{spigazione di $w_0$}%
Making use of this new variable, 
these authors proposed three theories for heat transmission labeled as
type I--III.
Type I theory, after suitable linearization, yields the standard Fourier law
\begin{equation}
  \mathbf q=-c_0\kappa\nabla w_t\quadd\text{ (type I),}
\end{equation}
which has been studied extensively.
Linearized versions of type II and III give the following
heat-conduction laws\marginnote{sistemare quest'eq.}
\begin{align}
  &\mathbf q=-c_0\tau\nabla w&&\text{ (type II),}
  \\&
  \mathbf q=-c_0\kappa\nabla w_t-c_0\tau\nabla w&&\text{ (type III).}
  \label{eq:type-III}
\end{align}
It is important to note that the thermal displacement $w$ becomes
necessary to describe type II and III laws, whereas type I law can be
described just in terms of the temperature $\vartheta=\partial_t w$.
The role of the primitive $w$ in type II and III theories is to
account for the past thermal history of the heat-conducting body.

This paper focuses on the most general type III theory.
In type III theory, the special $\tau=0$ case reduces to standard
type I theory; $\kappa=0$ yields type II theory.
Equation~\eqref{eq:balance}, along with type
III law~\eqref{eq:type-III}, leads to this formulation
\begin{equation}
  (w_{t}+l\varphi)_t-\kappa\Laplace w_t-\tau\Laplace w=f,
  \label{eq:heat-introduction}
  \quadd\text{ in }\Omega\times(0,T).
\end{equation}
Equation~\eqref{eq:phase-caginalp} with the substitution
$\vartheta=\partial_t w$ becomes
\begin{equation}
  \varphi_t-\Laplace\varphi +F'(\varphi)=\gamma w_t,
  \quadd\text{ in }\Omega\times(0,T).
  \label{eq:phase-introduction}
\end{equation}
The no-flux condition $\mathbf q\cdot\mathbf n = 0$ generates the
homogeneous Neumann boundary condition $\partial_n w=0$.
For the
system~\eqref{eq:heat-introduction}--\eqref{eq:phase-introduction}, well-posedness, asymptotic analysis, and convergence of the solutions
as $\tau\to0$ to the solution of the original Caginalp
system~\eqref{eq:heat-caginalp}--\eqref{eq:phase-caginalp} has
been carried out in \cite{canevari-colli12,canevari-colli13}.

In order to enable the SMC in the system above, we add a feedback term
in either equation~\eqref{eq:heat-introduction}
or~\eqref{eq:phase-introduction} \pier{in order to force} the solutions
$(w(t),\varphi(t))$ to reach the sliding manifold.
In \pier{the present paper}, we consider two cases.
\pier{As for} the first one, we adopt the following linear condition connecting
$w$ and $\varphi$
\begin{equation}
  \partial_t w(t)+\alpha\varphi(t)=\eta^*,
  \label{eq:sliding-manifold}
\end{equation}
to describe the sliding manifold, and the feedback control is added to
the left-hand side of equation~\eqref{eq:heat-introduction}.
In~\eqref{eq:sliding-manifold}, $\alpha$ is a real positive
\marginnote{perch\'e posiiva?} constant and $\eta^*$ a prescribed
function independent of time.
\pier{For} the second case, \pier{our aim is that the phase reaches} 
a prescribed phase distribution $\varphi^*$\pier{; in order to achieve that, 
we insert} the feedback term in~\eqref{eq:phase-introduction}.
The linear condition~\eqref{eq:sliding-manifold}\pier{,} as well as
the choice of the sign operator in $L^2(\Omega)$\pier{,} corresponds
to the Problem (A) in~\cite{bcgmr} for the first case studied there,
whereas the second case we investigate here corresponds to the Problem
(B) in~\cite{bcgmr}.
However, with respect to the arguments used in~\cite{bcgmr}, here we
adopt a slightly different approach, based on the simplification of
the auxiliary lemma and \pier{on the observation that} the
Moreau--Yosida regularization of the norm of a Banach space converges
uniformly. \pier{Thus, we improve some technical aspects
of~\cite{bcgmr}. Moreover, let us point out that our
system~\eqref{eq:heat-introduction}--\eqref{eq:phase-introduction} is
more difficult to handle, due to the hyperbolicity of
equation~\eqref{eq:heat-introduction}.}

This \pier{paper} is organized as follows.
The next section \pier{deals with} the common notation and the considered system
of equations;
it also \pier{contains the precise statements of results. Our theorems 
establish well-posedness, regularity  properties for the solutions, 
and in particular the existence of sliding modes for both problems. 
We note that in our argumentation the} results are grouped by problem.
The remaining sections\pier{, from \S~3 to \S~6,} are devoted to the proofs.
%


\section{Common notation and main results}
In this section, we \pier{set the notation and} present the problems
that we will solve, as well as the results concerning well-posedness
of the problems and regarding SMC.
Moreover, a few technical tools are recalled.

First of all, we require for $\Omega\subset \real^N$ to be an open,
bounded, smooth set. $\Gamma$ and $\partial_n$ represent the boundary
of $\Omega$ and the outward normal derivative on $\Gamma$,
respectively.  We set $Q_t=\Omega\times (0,t)$ for $t\in (0,T]$ and
$Q=Q_T$.

In the sequel, we will make use of techniques of convex analysis, so
we split \pier{the potential} \marginnote{dire prima cos'\`e $F$}
$F$ \pier{as the sum} $\widehat\beta+\widehat\pi$, \pier{by} requiring that
\begin{align}
  &\widehat\beta:\real\to[0,+\infty]
  \text{ is convex, proper, l.s.c.\ with }
  \widehat\beta(0)=0,\label{eq:beta-convex}\\
  &\widehat\pi:\real\to\real\text{ is $C^1$ and $\widehat\pi'$ is
    Lipschitz-continuous}.\label{eq:pi-Lipschitz}
\end{align}
We define $\beta$ and $\pi$ as the
subdifferential~\cite[\S~23]{rockafellar70} of $\widehat\beta$ and the
derivative of $\widehat\pi$, respectively.  It turns out that $\beta$
is a maximal monotone graph of $\real^2$, such that $0\in\beta(0)$.
We indicate with $\beta^\circ(r)$ the element of $\beta(r)$ having
minimum modulus. 

We make the following assumptions on the data of the problem
\begin{alignat}{1}
  &\kappa,\tau,\gamma,l\in (0,+\infty),\label{eq:datanum}\\
  &f\in L^2(Q).\label{eq:dataf}
\end{alignat}
 We introduce the following Hilbert spaces
\begin{equation*}
H:=L^2(\Omega),\quadd V:=H^1(\Omega),\quadd W:=\{v\in H^2(\Omega):
\partial_n v=0\}.
  \nonumber
\end{equation*}
On $H$ and $V$ we put the standard Hilbert norm, while we endow $W$
with the norm $\norm{u}_W^2=\norm{u}_H^2+\norm{\Laplace u}_H^2$, which
is equivalent to the norm $\norm{\cdot}_{H^2(\Omega)}$, by the smooth
boundary condition and elliptic regularity (see
e.g.~\cite[\S~9.6]{brezis11} or~\cite[\S~6.3]{evans98}).
The norm $\norm{\cdot}_H$ will also denote the norm of the space
$H^N=L^2(\Omega;\real^N)$.
The scalar product of $H$ and $H^N$ will be denoted with
$(\cdot,\cdot)$.
We define the \pier{$\Sign$} operator for the Hilbert space $H$ as the
subdifferential of the norm $\norm{\cdot}_H$,
namely\marginnote{biblio}
\begin{equation*}
\Sign(v)=
\begin{cases}
  \cfrac{v}{\norm{v}_{H}} &\text{ if } v\neq 0,\\
  B_H &\text{ if } v=0,
  \end{cases}
\end{equation*}
where $B_H$ is the closed unit ball of $H$.
The regularity hypotheses for the initial data are
\begin{gather}
  \label{eq:initial-data-hypothesis}
  \vartheta_0\in V,\quadd w_0\in W,\quadd\varphi_0\in V,\quadd
  \widehat\beta(\varphi_0)\in L^1(\Omega).
\end{gather}
A solution is a quadruplet $(w,\varphi,\xi,\sigma)$, for which we
require, at least, the following regularity 
\begin{alignat}{1}
  w&\in H^2(0,T;H)\cap W^{1,\infty}(0,T;V)\cap H^1(0,T;W),\label{eq:regw}\\
  \varphi&\in H^1(0,T;H)\cap L^\infty(0,T;V)\cap L^2(0,T;W),\\
  \xi&\in L^2(0,T;H),\\
  \sigma&\in L^\infty(0,T;H).\label{eq:regsigma}
\end{alignat}
We note that the Neumann boundary condition
$\partial_nw=\partial_n\varphi=0$ is incorporated in the definition of
the space $W$.

Given \important{a {\em sliding mode parameter}} $\rho>0$,
$\alpha\in\real$ and a {\em target function} \pier{$\eta^*$} the first problem
is to find a quadruplet $(w,\varphi,\xi,\sigma)$ satifying the
regularity conditions~\eqref{eq:regw}--\eqref{eq:regsigma} and
\begin{alignat}{2}
  &(w_t+l\varphi)_t-\kappa\Laplace w_t -\tau\Laplace w +\rho\sigma =f
  \quadd\text{ a.e.\ in }Q,
  \label{eq:heat}
  \\&
  \sigma\in\Sign(w_t+\alpha\varphi-\eta^*)
  \quadd\text{ in $H$, a.e.\ in }(0,T),
  \label{eq:sign}
  \\&
  \varphi_t-\Laplace\varphi+\xi+\pi(\varphi)=\gamma w_t
  \quadd\text{ a.e.\ in }Q,
  \label{eq:phase}
  \\&
  \xi\in\beta(\varphi)
  \quadd\text{ a.e.\ in }Q,
  \label{eq:beta}
  \\&
  \label{eq:initial-conditions}
  w_t(0)=\vartheta_0,\quadd w(0)=w_0,\quadd \varphi(0)=\varphi_0
  \quadd
  \text{ a.e.\ in }\Omega.
\end{alignat}
We label the problem above as \problem{A}.  The regularity hypothesis
for the target function $\eta^*$ is
\begin{equation}
  \eta^*\in W.
  \label{eq:target-function}
\end{equation}
\important{Let us emphasize that the feedback law is highly non-local,
  as the value of the feedback term at $(x, t)$ depends on $(w(\cdot,
  t), \varphi(\cdot, t))$ and not only on $(w(x, t), \varphi(x, t))$.
  The sliding-mode parameter $\rho > 0$ represents the strength of the
  control law and it plays a central role in this kind of analysis.
  Accordingly, we will highlight the dependence on $\rho$ in all our
  estimates.}
  
On the other hand, given $\rho>0$ and a target function $\varphi^*$,
\problem{B} consists in finding a a quadruplet
$(w,\varphi,\xi,\sigma)$ satifying the regularity
conditions~\eqref{eq:regw}--\eqref{eq:regsigma} and
\begin{alignat}{2}
  &(w_t+l\varphi)_t-\kappa\Laplace w_t-\tau\Laplace w=f
  \quadd\text{ a.e.\ in }Q,
  \label{eq:heatB}
  \\&
  \varphi_t-\Laplace\varphi+\xi+\pi(\varphi)+\rho\sigma=\gamma w_t
  \quadd\text{ a.e.\ in }Q,
  \label{eq:phaseB}
  \\&
  \sigma\in\Sign(\varphi-\varphi^*)
  \quadd\text{ in $H$, a.e.\ in }(0,T),
  \label{eq:signB}
  \\&
  \xi\in\beta(\varphi)
  \quadd\text{ a.e.\ in }Q,
  \label{eq:betaB}
  \\&
  \label{eq:initial-conditionsB}
  w_t(0)=\vartheta_0,\quadd w(0)=w_0,\quadd \varphi(0)=\varphi_0
  \quadd
  \text{ a.e.\ in }\Omega.
\end{alignat}
For this problem, the regularity required for $\varphi^*$ is
\begin{equation}
  \label{eq:target-functionB}
  \varphi^*\in W
  \quadd\text{ and }\quadd
  \pier{\beta^\circ} (\varphi^*)\in H.
\end{equation}
\paragraph{Results for \problem{A}.}

First of all we present \pier{the} well-posedness results, starting from
the existence theorem.
\begin{thrm}[Existence]\label{thrm:existence}
Assume~\eqref{eq:beta-convex}--
\pier{\eqref{eq:initial-data-hypothesis}}, and~\eqref{eq:target-function}.
\marginnote{elencone di ipotesi}
Then
there exist two constants $C_1$, $C_2>0$ such that for every $\rho>0$
the problem~\eqref{eq:heat}--\eqref{eq:initial-conditions} has at least
a solution $(w,\varphi,\xi,\sigma)$
satisfying~\eqref{eq:regw}--\eqref{eq:regsigma} and the 
\pier{estimates} 
\marginnote{attenzione ho aggiunto la stima in $W^{1,\infty}(0,T;V)$}
\begin{align}
  \label{eq:existence-bound1}
  &
  \begin{aligned}
     &\norm{w}_{ W^{1,\infty}(0,T;V)\cap H^1(0,T;W)}
     +\norm{\varphi}_{H^1(0,T;H)\cap L^\infty(0,T;V)\cap L^2(0,T;W)}
     \\%
     &+
     \rho\norm{w_t +\alpha\varphi-\eta^*}_{L^1(0,T;H)}
     +\norm{\widehat\beta(\varphi)}_{L^\infty(0,T;L^1(\Omega))}
     \\%
     &+\norm{\xi}_{L^2(0,T;H)}
     +\norm{\sigma}_{L^\infty(0,T;H)}
     \leq C_1,
  \end{aligned}
  \\[2mm]%
  \label{eq:existence-bound2}
  &\norm{w}_{H^2(0,T;H)}  \leq C_2(1+\rho^{1/2}).
\end{align}
\end{thrm}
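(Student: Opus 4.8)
The plan is to construct a solution as the limit of a doubly regularized problem and to read off \eqref{eq:existence-bound1}--\eqref{eq:existence-bound2} from $\rho$-explicit a priori bounds obtained along the approximation. First I would replace $\beta$ by its Yosida approximation $\beta_\lambda$ and the operator $\Sign$ by the gradient $\Sign_\epsilon$ of the Moreau--Yosida regularization $N_\epsilon$ of the norm $\norm{\cdot}_H$; both are Lipschitz continuous, so the regularized version of \eqref{eq:heat}--\eqref{eq:initial-conditions} is a semilinear hyperbolic--parabolic system that I would solve by a Faedo--Galerkin scheme in the eigenbasis of $-\Laplace$ with homogeneous Neumann conditions, passing to the limit in the Galerkin index through the estimates below. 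The uniform convergence $N_\epsilon\to\norm{\cdot}_H$ is exactly what will later make the identification of the sign relation painless.

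The core is the first energy estimate. I would test \eqref{eq:heat} by $w_t$ and \eqref{eq:phase} by $\tfrac{l}{\gamma}\varphi_t$ and add: the factor $l/\gamma$ makes the coupling terms $l(\varphi_t,w_t)$ cancel, and $(\xi,\varphi_t)=\tfrac{d}{dt}\int_\Omega\widehat\beta(\varphi)$ by the convex chain rule, so the left-hand side controls $\norm{w_t}_{L^\infty(0,T;H)}$, $\norm{\nabla w_t}_{L^2(0,T;H)}$, $\norm{\nabla w}_{L^\infty(0,T;H)}$, $\norm{\varphi_t}_{L^2(0,T;H)}$, $\norm{\nabla\varphi}_{L^\infty(0,T;V)}$ and $\norm{\widehat\beta(\varphi)}_{L^\infty(0,T;L^1(\Omega))}$. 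With $s:=w_t+\alpha\varphi-\eta^*$ and $w_t=s-\alpha\varphi+\eta^*$, the sliding term splits as $\rho(\sigma,w_t)=\rho\norm{s}_H+\rho(\sigma,\eta^*-\alpha\varphi)$, the first summand producing precisely the bound on $\rho\norm{s}_{L^1(0,T;H)}$. The delicate point is that the remainder carries an explicit $\rho$, so bounding it by $\norm{\sigma}_H\le1$ would destroy uniformity; instead I would substitute $\rho\sigma=f-(w_t+l\varphi)_t+\kappa\Laplace w_t+\tau\Laplace w$ from \eqref{eq:heat} and integrate by parts. The $w_{tt}$ contribution is removed by integration by parts in time, $(\Laplace w_t,\eta^*)$ becomes $(w_t,\Laplace\eta^*)$ thanks to $\eta^*\in W$ and the Neumann conditions, and $(\Laplace w_t,\varphi)$ becomes $-(\nabla w_t,\nabla\varphi)$; every surviving term is then controlled by quantities already on the left, and a Gronwall argument gives the $\rho$-uniform bound \eqref{eq:existence-bound1}.

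Next I would test \eqref{eq:phase} by $-\Laplace\varphi$ and use $(\xi,-\Laplace\varphi)\ge0$ to obtain $\varphi\in L^2(0,T;W)$, whence $\xi=\gamma w_t-\varphi_t+\Laplace\varphi-\pi(\varphi)$ is bounded in $L^2(0,T;H)$, while $\norm{\sigma}_{L^\infty(0,T;H)}\le1$ is automatic. For the regularity of $w$ I would test \eqref{eq:heat} by $-\Laplace w_t$. The structural observation that tames the non-local hyperbolic sliding term is that, writing $-\Laplace w_t=-\Laplace s+\alpha\Laplace\varphi-\Laplace\eta^*$ with $\partial_n s=0$, one has $(\sigma,-\Laplace s)=\norm{\nabla s}_H^2/\norm{s}_H\ge0$, so the leading sliding contribution has the favourable sign and may be discarded; the remaining pieces are rewritten via the same equation-substitution trick, those paired with the time-independent datum $\Laplace\eta^*$ being integrated by parts in time and the rest absorbed by the coercive $\kappa\norm{\Laplace w_t}_H^2$. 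This yields the control of $w_t$ in $L^\infty(0,T;V)\cap L^2(0,T;W)$. Finally, testing \eqref{eq:heat} by $w_{tt}$ and using $\rho(\sigma,w_{tt})=\rho\tfrac{d}{dt}\norm{s}_H-\rho\alpha(\sigma,\varphi_t)$, the coercive $\norm{w_{tt}}_{L^2(0,T;H)}^2$ is balanced against the single genuinely $\rho$-scaled contribution $\rho\norm{s(0)}_H$, giving $\norm{w_{tt}}_{L^2(0,T;H)}^2\le C(1+\rho)$, which is exactly \eqref{eq:existence-bound2}.

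Finally I would pass to the limit $\lambda,\epsilon\to0$. The estimates give weak and weak-$*$ limits for all unknowns, and Aubin--Lions yields strong convergence of $\varphi$ and $w_t$ in $L^2(0,T;H)$, hence strong convergence of $s$, which is what is needed to identify the two maximal monotone relations: $\xi\in\beta(\varphi)$ follows by the usual $\limsup$/Minty argument, and $\sigma\in\Sign(s)$ analogously, the uniform convergence $N_\epsilon\to\norm{\cdot}_H$ guaranteeing that the approximate graphs converge to $\Sign$; the initial conditions survive through the continuous embeddings. I expect the main obstacle to be precisely the higher-order estimate for the hyperbolic equation: keeping the highly non-local, $\rho$-scaled term $\rho\sigma$ under control so that the $\rho$-growth is confined to $\norm{w_{tt}}_{L^2(0,T;H)}$ while the lower-order norms stay uniform. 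This is where both the sign observation $(\sigma,-\Laplace s)\ge0$ and the substitution of \eqref{eq:heat} for $\rho\sigma$ are indispensable, and the careful separation of the $\rho$-uniform part of \eqref{eq:existence-bound1} from the $\rho^{1/2}$-growth of \eqref{eq:existence-bound2} is the crux of the argument.
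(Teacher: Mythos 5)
Your overall strategy coincides with the paper's: Yosida/Moreau--Yosida regularization of $\beta$ and $\Sign$, Faedo--Galerkin in the Neumann eigenbasis, four a priori estimates (an energy estimate producing the $\rho$-weighted $L^1$ bound, an elliptic estimate giving $\varphi\in L^2(0,T;W)$ and $\xi\in L^2(0,T;H)$, a higher-order estimate for $w_t$, and a final one giving $\norm{w_{tt}}_{L^2(0,T;H)}^2\le C(1+\rho)$), followed by compactness and identification of the two maximal monotone graphs. Your first estimate is an algebraic rearrangement of the paper's: testing the heat equation by $w_t$ and then substituting the equation for $\rho\sigma$ in the pairing with $\eta^*-\alpha\varphi$ is identical to testing directly by $s=w_t+\alpha\varphi-\eta^*$, which is what the paper does (the paper tests the phase equation by $\varphi_t$ and handles the coupling by Young and Gronwall rather than by your $l/\gamma$ cancellation; this is immaterial).

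The one place where your description, taken literally, does not close is the third estimate. Testing by $-\Laplace w_t$ and substituting the equation for $\rho\sigma$ in the pairing with $\alpha\Laplace\varphi-\Laplace\eta^*$ is again equivalent to testing the whole equation by $-\Laplace s$; but among the substituted terms you then face $(w_{tt},\alpha\Laplace\varphi)$, which can neither be absorbed by the coercive $\kappa\norm{\Laplace w_t}_H^2$ nor estimated at that stage: $w_{tt}$ is controlled only in the \emph{fourth} estimate, and only with $\rho^{1/2}$ growth, which would contaminate the $\rho$-uniform part of \eqref{eq:existence-bound1}. This cross term disappears only if you regroup $(w_{tt}+\alpha\varphi_t,-\Laplace s)=(\partial_t s,-\Laplace s)=\frac12\frac{\de}{\de t}\norm{\nabla s}_H^2$; this is precisely why the paper first rewrites the heat equation entirely in the variable $\eta=s$ (also converting $-\tau\Laplace w$ into $-\tau\int_0^t\Laplace\eta\,\de s$ plus known data) and only then tests by $-\Laplace\eta$, so that every dangerous cross term is packaged into exact time derivatives of $\norm{\nabla\eta}_H^2$ and of $\norm{\int_0^t\Laplace\eta\,\de s}_H^2$. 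A second, minor point: the $\varepsilon$-uniformity of the first estimate rests on $\int_\Omega\widehat\beta_\varepsilon(\varphi_0)\le\int_\Omega\widehat\beta(\varphi_0)$, which is available only after the Galerkin limit (for the projected datum $\varphi_{0,n}$ one only has the bound $\tfrac{1}{2\varepsilon}\norm{\varphi_0}_H^2$), so the limits $n\to\infty$ and $\varepsilon\to0$ must be taken in that order (or the approximate initial data chosen more carefully); your plan of sending both regularization parameters to zero at the end glosses over this.
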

The following result gives further regularity of the solutions under
the hypothesis
\begin{equation}\label{eq:further-initial-data}
\varphi_0\in W\quadd\text{ and }\quadd
\beta^\circ(\varphi_0)\in H.
\end{equation}
\begin{thrm}[Further regularity]\label{thrm:further-regularity}
Assume the hypotheses of Theorem~\ref{thrm:existence}
and the condition~\eqref{eq:further-initial-data}. 
Then every solution $(w,\varphi,\xi,\sigma)$ given by
Theorem~\ref{thrm:existence} satisfies
\begin{gather}
\varphi\in
W^{1,\infty}(0,T;H)\cap H^1(0,T;V)\cap L^\infty (0,T;W),
\\
\xi\in L^\infty(0,T;H),
\end{gather}
and 
\begin{equation}\label{eq:further-regularity}
\norm{\varphi}_{
  W^{1,\infty}(0,T;H)\cap H^1(0,T;V)\cap L^\infty (0,T;W)}
+\norm{\xi}_{
  L^\infty(0,T;H)}
\leq C_3(1+\rho^{1/2}),
\end{equation}
where $C_3>0$ is a constant independent of $\rho$.
\marginnote{dire meglio l'indipendenza da $\rho$.}
\end{thrm}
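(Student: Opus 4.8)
The plan is to derive the additional regularity via a formal a~priori estimate, rigorously justified through the same approximation scheme (Yosida regularization of $\beta$ together with the Moreau--Yosida approximation of the $\Sign$ operator) already underlying the proof of Theorem~\ref{thrm:existence}. Since Theorem~\ref{thrm:existence} supplies a solution satisfying \eqref{eq:existence-bound1}--\eqref{eq:existence-bound2} uniformly in $\rho$, I will work at the approximate level where all quantities are smooth, obtain a bound of the claimed form, and then pass to the limit; the bound survives by weak and weak-$*$ lower semicontinuity. The key new ingredient is the stronger datum \eqref{eq:further-initial-data}, namely $\varphi_0\in W$ and $\beta^\circ(\varphi_0)\in H$, which makes the initial value of $\varphi_t$ controllable.

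The main estimate is obtained by differentiating the phase equation \eqref{eq:phase} formally in time and testing with $\varphi_t$. Writing the equation as $\varphi_t-\Laplace\varphi+\xi+\pi(\varphi)=\gamma w_t$ and differentiating gives
\begin{equation*}
  \varphi_{tt}-\Laplace\varphi_t+\xi_t+\pi'(\varphi)\varphi_t=\gamma w_{tt}.
\end{equation*}
Testing against $\varphi_t$ in $H$ and integrating over $(0,t)$, the term $(\xi_t,\varphi_t)$ is nonnegative because $\beta$ is monotone (so $\xi_t\varphi_t\geq0$ pointwise), the diffusion term yields $\int_0^t\norm{\nabla\varphi_t}_H^2$, and one arrives at
\begin{equation*}
  \frac12\norm{\varphi_t(t)}_H^2+\int_0^t\norm{\nabla\varphi_t}_H^2
  \leq\frac12\norm{\varphi_t(0)}_H^2
  +\int_0^t\bigl(\gamma w_{tt}-\pi'(\varphi)\varphi_t,\varphi_t\bigr).
\end{equation*}
The Lipschitz bound on $\pi'=\widehat\pi'$ controls the last term by $C\int_0^t\norm{\varphi_t}_H^2$, absorbable by Gronwall. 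The forcing term $\gamma w_{tt}$ is estimated using \eqref{eq:existence-bound2}: by Cauchy--Schwarz and Young, $\int_0^t(\gamma w_{tt},\varphi_t)\leq C\norm{w_{tt}}_{L^2(0,T;H)}^2+\frac12\int_0^t\norm{\varphi_t}_H^2$, and $\norm{w_{tt}}_{L^2(0,T;H)}^2\leq C_2^2(1+\rho^{1/2})^2\leq C(1+\rho)$. This produces the bound on $\varphi$ in $W^{1,\infty}(0,T;H)\cap H^1(0,T;V)$ of order $(1+\rho)^{1/2}=(1+\rho^{1/2})$ up to constants, matching \eqref{eq:further-regularity}. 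The initial value $\norm{\varphi_t(0)}_H$ is bounded by reading the phase equation at $t=0$: $\varphi_t(0)=\Laplace\varphi_0-\beta^\circ(\varphi_0)-\pi(\varphi_0)+\gamma\vartheta_0$, which lies in $H$ precisely by \eqref{eq:further-initial-data} and \eqref{eq:initial-data-hypothesis}, and is $\rho$-independent. Finally, rewriting \eqref{eq:phase} as $-\Laplace\varphi+\xi=\gamma w_t-\varphi_t-\pi(\varphi)$ with the right-hand side now bounded in $L^\infty(0,T;H)$, a standard maximal monotone elliptic estimate (testing with $\xi=\beta^\circ(\varphi)$ and separately with $-\Laplace\varphi$, using $(-\Laplace\varphi,\beta^\circ(\varphi))\geq0$) yields simultaneously $\xi\in L^\infty(0,T;H)$ and $\varphi\in L^\infty(0,T;W)$ with the same $(1+\rho^{1/2})$ dependence.

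I expect the main obstacle to be the rigorous justification of the time-differentiated estimate, since at the exact level $\xi\in\beta(\varphi)$ is only in $L^2(0,T;H)$ and $\xi_t$ need not exist. The clean way is to carry out the computation entirely on the approximating problem, where $\beta$ is replaced by its Yosida regularization $\beta_\varepsilon$, a $C^1$ monotone Lipschitz function, so that $\partial_t\beta_\varepsilon(\varphi)=\beta_\varepsilon'(\varphi)\varphi_t$ with $\beta_\varepsilon'\geq0$ makes the monotone term manifestly nonnegative without requiring $\xi_t$. One must check that the approximate initial data still give a $\rho$-independent bound on $\norm{\varphi_t(0)}_H$; this follows because $\beta_\varepsilon(\varphi_0)\to\beta^\circ(\varphi_0)$ in $H$ with $\norm{\beta_\varepsilon(\varphi_0)}_H\leq\norm{\beta^\circ(\varphi_0)}_H$ by the contraction property of the Yosida approximation. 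The regularization of $\Sign$ in \eqref{eq:heat} affects only the $w$-equation, whose needed bound \eqref{eq:existence-bound2} is already uniform in $\rho$ and $\varepsilon$, so no new difficulty enters from the control term. Passing $\varepsilon\to0$ and using lower semicontinuity of the norms then transfers the estimate to the solution furnished by Theorem~\ref{thrm:existence}, completing the proof.
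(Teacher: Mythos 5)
Your proposal is correct and follows essentially the same route as the paper: differentiate the Yosida-regularized phase equation in time, test with $\partial_t\varphi$, use $\beta_\varepsilon'\geq 0$ to drop the monotone term, control $\norm{\partial_t\varphi(0)}_H$ by reading the equation at $t=0$ together with $\norm{\beta_\varepsilon(\varphi_0)}_H\leq\norm{\beta^\circ(\varphi_0)}_H$, apply Gronwall, and then recover $\varphi\in L^\infty(0,T;W)$ and $\xi\in L^\infty(0,T;H)$ by comparison and the elliptic estimate based on $-(\beta_\varepsilon(\varphi),\Laplace\varphi)\geq0$. The one refinement the paper adds, at exactly the point you flag as the main obstacle, is a two-stage bootstrap: it first runs the differentiated estimate at the Faedo--Galerkin level, accepting an $\varepsilon$-dependent constant (since the projected datum $P_n\varphi_0$ does not allow the bound $\norm{\beta_\varepsilon(P_n\varphi_0)}_H\leq\norm{\beta^\circ(\varphi_0)}_H$), solely to establish $\varphi_\varepsilon\in H^1(0,T;V)$, which is what licenses differentiating the $\varepsilon$-level equation in time; only then is the $\varepsilon$-uniform, $(1+\rho^{1/2})$-bound obtained at the $\varepsilon$-level where the initial datum is $\varphi_0$ itself.
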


\pier{The regularity given by the above theorem is necessary for
  proving the existence of sliding modes.}

The next \pier{result} closes the topic of the well-posedness of the
problem.  As consequence, we have that under the assumption $l=\alpha$
the solution is unique.
\begin{thrm}[Continuous dependence]\label{thrm:continuous-dependence}
Suppose~\eqref{eq:beta-convex}--\eqref{eq:datanum},
\eqref{eq:target-function}, $\rho>0$, and $l=\alpha$.
Let $i=1, 2$.
We consider
$(\vartheta_{0,i},w_{0,i},\varphi_{0,i},f_i,w_i,\varphi_i,\xi_i,\sigma_i)$,
where the functions $(\vartheta_{0,i},w_{0,i},\varphi_{0,i})$ are
initial data satifying equation~\eqref{eq:initial-data-hypothesis},
$f_i$ is a function satisfying~\eqref{eq:dataf}, and
$(w_i,\varphi_i,\xi_i,\sigma_i)$ is a solution for \problem{B} given by
Theorem~\ref{thrm:existence} with
$(\vartheta_{0},w_{0},\varphi_{0})=(\vartheta_{0,i},w_{0,i},\varphi_{0,i})$
and $f=f_i$.
Then, there exists a constant $C_4>0$, independent of $\vartheta_{0,i},
w_{0,i}, \varphi_{0,i}, f_i$, and $\rho$, such that\marginnote{notare
  l'indipendenza da $\rho$}
\begin{equation}\label{eq:continuous-dependence}
  \begin{split}
    &\norm{w_1-w_2}_{W^{1,\infty}(0,T;H)\cap H^1(0,T;V)}
    +\norm{\varphi_1-\varphi_2}_{L^\infty(0,T;H)\cap L^2(0,T;V)}
    \\&
    \leq C_4(\norm{\vartheta_{0,1}-\vartheta_{0,1}}_H
    +\norm{w_{0,1}-w_{0,2}}_V
    \\&
    +\norm{\varphi_{0,1}-\varphi_{0,2}}_H+\norm{f_1-f_2}_{L^2(Q)}).
  \end{split}
\end{equation}
\end{thrm}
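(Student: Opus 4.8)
The plan is to derive the estimate by subtracting the two systems, testing the resulting equations against well-chosen multipliers, and exploiting monotonicity to discard the troublesome nonlinear and feedback terms. Throughout I write $w:=w_1-w_2$, $\varphi:=\varphi_1-\varphi_2$, $\xi:=\xi_1-\xi_2$, $\sigma:=\sigma_1-\sigma_2$, and similarly for the data; I also set $\vartheta:=w_t$ (so $\vartheta(0)=\vartheta_{0,1}-\vartheta_{0,2}$). Note the hypothesis $l=\alpha$ is precisely what makes the argument close, because it aligns the latent-heat coupling in the heat equation with the linear combination appearing inside $\Sign$. Subtracting the two copies of \eqref{eq:heat} and \eqref{eq:phase} gives
\begin{align}
  &(\vartheta+l\varphi)_t-\kappa\Laplace\vartheta-\tau\Laplace w+\rho\,\sigma=f,
  \label{eq:cd-heat}
  \\
  &\varphi_t-\Laplace\varphi+\xi+\pi(\varphi_1)-\pi(\varphi_2)=\gamma\vartheta,
  \label{eq:cd-phase}
\end{align}
with homogeneous Neumann conditions built into $W$ and vanishing source differences understood in \eqref{eq:cd-heat} replaced by $f=f_1-f_2$.

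First I would test \eqref{eq:cd-heat} by $\vartheta+\alpha\varphi=\vartheta+l\varphi$ in $H$ and integrate over $(0,t)$. The key observation is that the feedback term is handled by monotonicity of the $\Sign$ graph: since $\sigma_i\in\Sign(w_{t,i}+\alpha\varphi_i-\eta^*)$ and $\Sign=\partial\norm{\cdot}_H$ is a monotone operator, one has $(\sigma_1-\sigma_2,\,(\vartheta_1+\alpha\varphi_1)-(\vartheta_2+\alpha\varphi_2))\geq0$, so the term $\rho(\sigma,\vartheta+\alpha\varphi)$ is nonnegative and may be dropped. This is exactly why the estimate is independent of $\rho$. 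The remaining terms produce $\tfrac12\tfrac{\de}{\de t}\norm{\vartheta+l\varphi}_H^2$, the diffusion contributions $\kappa\norm{\nabla\vartheta}_H^2$ and a term $\tau(\nabla w,\nabla\vartheta)=\tfrac{\tau}{2}\tfrac{\de}{\de t}\norm{\nabla w}_H^2$, plus cross terms $\kappa\alpha(\nabla\vartheta,\nabla\varphi)$ and $\tau(\nabla w,\alpha\nabla\varphi)$ to be controlled by Young's inequality and absorbed later. Next I would test \eqref{eq:cd-phase} by $\varphi$ in $H$: the monotonicity $0\in\partial\widehat\beta$ gives $(\xi,\varphi)=(\xi_1-\xi_2,\varphi_1-\varphi_2)\geq0$, so the $\beta$-term is dropped; the Lipschitz bound \eqref{eq:pi-Lipschitz} controls $(\pi(\varphi_1)-\pi(\varphi_2),\varphi)$ by $C\norm{\varphi}_H^2$; and the coupling yields $\gamma(\vartheta,\varphi)$, which I would pair against the corresponding term from the heat test.

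I would then add the two tested identities and organize the result as a Gronwall inequality. After expanding $\norm{\vartheta+l\varphi}_H^2$ and combining with the phase estimate, the coercive left-hand side controls $\norm{\vartheta(t)}_H^2$, $\norm{\nabla w(t)}_H^2$, $\norm{\varphi(t)}_H^2$, together with the time-integrated dissipations $\int_0^t\norm{\nabla\vartheta}_H^2$ and $\int_0^t\norm{\nabla\varphi}_H^2$. The cross terms $\kappa\alpha(\nabla\vartheta,\nabla\varphi)$ must be absorbed into $\kappa\norm{\nabla\vartheta}_H^2$ and the $\nabla\varphi$-dissipation using Young with a small parameter; this is the step requiring some care, since I must verify that the coefficients leave strictly positive coercivity after absorption. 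Applying Gronwall's lemma then bounds the left-hand side by the initial-data and source contributions, namely $\norm{\vartheta_0}_H^2$, $\norm{\nabla w_0}_H^2$ (and $\norm{w_0}_H^2$ separately, recovered by integrating $w_t$), $\norm{\varphi_0}_H^2$, and $\norm{f}_{L^2(Q)}^2$. Taking a supremum in $t$ upgrades the pointwise controls to the $L^\infty(0,T)$ norms appearing in \eqref{eq:continuous-dependence}, and the $W^{1,\infty}(0,T;H)\cap H^1(0,T;V)$ norm of $w$ follows since $\vartheta=w_t$ has been estimated in $L^\infty(0,T;H)$ while $\nabla\vartheta=\nabla w_t\in L^2(0,T;H)$.

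The main obstacle I anticipate is the simultaneous treatment of the two elliptic operators $-\kappa\Laplace w_t$ and $-\tau\Laplace w$ in the hyperbolic heat equation: the $\tau$-term does not dissipate but only contributes an energy $\tfrac{\tau}{2}\norm{\nabla w}_H^2$, so the gradient of $w$ is controlled only in $L^\infty(0,T;H)$, not in any dissipative norm, and one must be careful that the cross term $\tau\alpha(\nabla w,\nabla\varphi)$ is balanced by the Gronwall term rather than by dissipation. A secondary subtlety is choosing the heat-equation multiplier as $\vartheta+\alpha\varphi$ (rather than just $\vartheta$) so that the $\Sign$ monotonicity applies cleanly; the identity $l=\alpha$ is what guarantees that this same combination reconstructs the time-derivative $\tfrac12\tfrac{\de}{\de t}\norm{\vartheta+l\varphi}_H^2$ on the left, and I expect the whole argument to hinge on this coincidence.
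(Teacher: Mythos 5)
Your strategy is the paper's: subtract the two systems, test the heat difference by $w_t+l\varphi$ (so that $l=\alpha$ lets the monotonicity of $\Sign$ eliminate the $\rho$-term), test the phase difference by a multiple of $\varphi$, drop $(\xi_1-\xi_2,\varphi_1-\varphi_2)\ge 0$ by monotonicity of $\beta$, and close with Young and Gronwall. However, the step you yourself flag as ``requiring some care'' is precisely where your version, as written, can fail. You test the phase equation by $\varphi$ itself, so the dissipation available to absorb the cross term $\kappa l\,(\nabla w_t,\nabla\varphi)$ is $\kappa\norm{\nabla w_t}_H^2+\norm{\nabla\varphi}_H^2$. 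Young's inequality gives
\begin{equation*}
\kappa l\,|(\nabla w_t,\nabla\varphi)|\le \delta\norm{\nabla w_t}_H^2+\frac{\kappa^2l^2}{4\delta}\norm{\nabla\varphi}_H^2,
\end{equation*}
and you need simultaneously $\delta<\kappa$ and $\kappa^2l^2/(4\delta)<1$, which is possible only if $\kappa l^2<4$. Since \eqref{eq:datanum} allows arbitrary positive $\kappa$ and $l$, strict coercivity is not guaranteed, and the lost $\norm{\nabla w_t}_H^2$ cannot be recovered from the Gronwall function (which only contains the $L^\infty$-in-time quantities $\norm{w_t}_H^2$, $\norm{\varphi}_H^2$, $\norm{\nabla w}_H^2$).

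The fix is a one-line reweighting, and it is what the paper does: test the phase difference by $\kappa l^2\varphi$ rather than $\varphi$. The gradient terms then assemble into $\kappa\bigl(\norm{\nabla w_t}_H^2+l(\nabla w_t,\nabla\varphi)+l^2\norm{\nabla\varphi}_H^2\bigr)\ge\frac{\kappa}{2}\bigl(\norm{\nabla w_t}_H^2+l^2\norm{\nabla\varphi}_H^2\bigr)$, which is coercive for all admissible parameters; any constant weight larger than $\kappa l^2/4$ would work. With this correction your argument coincides with the paper's proof: all the remaining ingredients you list --- monotonicity of $\Sign$ and $\beta$, Lipschitz continuity of $\pi$, sending $\tau l(\nabla w,\nabla\varphi)$ into the Gronwall term rather than the dissipation, and recovering $\norm{w}_{L^\infty(0,T;H)}$ by integrating $w_t$ --- are exactly those used there.
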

\begin{cor}[Uniqueness]\label{cor:uniqueness}
  Suppose that the hypotheses of Theorem~\ref{thrm:existence} hold
  true and that $l=\alpha$.  Then the solution is unique.
\end{cor}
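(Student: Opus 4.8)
The plan is to obtain uniqueness as an immediate specialization of the continuous dependence estimate in Theorem~\ref{thrm:continuous-dependence}, followed by an algebraic identification of the auxiliary variables $\xi$ and $\sigma$.

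First I would take two solutions $(w_i,\varphi_i,\xi_i,\sigma_i)$, $i=1,2$, of \problem{A} sharing \emph{the same} data, so that $\vartheta_{0,1}=\vartheta_{0,2}$, $w_{0,1}=w_{0,2}$, $\varphi_{0,1}=\varphi_{0,2}$ and $f_1=f_2$. Under the standing assumption $l=\alpha$, the hypotheses of Theorem~\ref{thrm:continuous-dependence} are satisfied, so the bound~\eqref{eq:continuous-dependence} applies to this pair. Every term on its right-hand side is the norm of a difference of data, and all of these differences vanish by construction; hence the right-hand side equals $0$. Since the left-hand side is a sum of nonnegative norms, it must vanish as well, which forces
\begin{equation*}
  w_1=w_2 \ \text{ in } W^{1,\infty}(0,T;H)\cap H^1(0,T;V), \qquad
  \varphi_1=\varphi_2 \ \text{ in } L^\infty(0,T;H)\cap L^2(0,T;V).
\end{equation*}
Because each $w_i$ and $\varphi_i$ enjoys the full regularity~\eqref{eq:regw}--\eqref{eq:regsigma}, these identities upgrade to equality in the respective solution spaces, so in particular $\Laplace(\varphi_1-\varphi_2)=0$ and $(w_1-w_2)_t=0$ hold as well.

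Next I would recover $\xi$ and $\sigma$. Subtracting the two instances of the phase equation~\eqref{eq:phase} and using $w_1=w_2$ together with $\varphi_1=\varphi_2$ (hence $\pi(\varphi_1)=\pi(\varphi_2)$) yields $\xi_1-\xi_2=0$; likewise, subtracting the two instances of the heat equation~\eqref{eq:heat} gives $\rho(\sigma_1-\sigma_2)=0$, and since $\rho>0$ we conclude $\sigma_1=\sigma_2$. Therefore the entire quadruplet $(w,\varphi,\xi,\sigma)$ is uniquely determined, which is the assertion of the corollary.

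As for difficulty, there is essentially no obstacle: the whole quantitative content of uniqueness is already encoded in Theorem~\ref{thrm:continuous-dependence}, and the present argument amounts to the routine choice of identical data together with a direct read-off of $\xi$ and $\sigma$ from the two governing equations. The only mild point to keep in mind is that the continuous dependence estimate controls $w$ and $\varphi$ in weaker norms than the full solution regularity, so one should invoke~\eqref{eq:regw}--\eqref{eq:regsigma} to promote the a.e.\ equalities to the level needed for differentiating and for applying $\Laplace$.
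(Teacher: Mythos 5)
Your proposal is correct and follows essentially the same route as the paper: the authors likewise apply Theorem~\ref{thrm:continuous-dependence} with identical data to conclude $w_1=w_2$ and $\varphi_1=\varphi_2$, and then recover $\sigma_1=\sigma_2$ and $\xi_1=\xi_2$ ``by comparison,'' which is exactly the subtraction of the two governing equations that you spell out. Your explicit read-off of $\xi$ from~\eqref{eq:phase} and of $\rho\sigma$ from~\eqref{eq:heat} (using $\rho>0$) is just a more detailed rendering of that same step.
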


Finally we come to the most important result for \problem{A}: the theorem
that guarantees that the solutions reach the sliding manifold in a
finite time.
\begin{thrm}[Sliding mode]\label{thrm:sliding-mode}
Assume~\eqref{eq:beta-convex}--\eqref{eq:dataf},
\eqref{eq:initial-data-hypothesis}, \eqref{eq:target-function}
\eqref{eq:further-initial-data}, and $f \in L^\infty(0,T;H)$.  Then
there exist $\rho^*>0$, such that the following condition is
fulfilled\pier{: for} every $\rho>\rho^*$ and for every solution
$(w,\varphi,\xi,\sigma)$ to the
problem~\eqref{eq:heat}--\eqref{eq:initial-conditions} there exist a
time $T^*\in[0,T)$, such that
\begin{equation}
w_t(t)+\alpha\varphi(t)=\eta^*,\quadd\text{ a.e.\ in }\Omega,\text{ for
  a.a.\ }t\in(T^*,T).    
\end{equation}
\end{thrm}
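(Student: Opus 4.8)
The plan is to establish finite-time extinction, in the $H$-norm, of the sliding variable $\zeta:=w_t+\alpha\varphi-\eta^*$; since the target identity $w_t(t)+\alpha\varphi(t)=\eta^*$ a.e.\ in $\Omega$ is precisely $\zeta(t)=0$ in $H$, it suffices to produce $T^*\in[0,T)$ with $\norm{\zeta(t)}_H=0$ for a.a.\ $t\in(T^*,T)$. First I would derive a genuinely parabolic equation for $\zeta$ itself. From~\eqref{eq:heat} we have $w_{tt}=f-l\varphi_t+\kappa\Laplace w_t+\tau\Laplace w-\rho\sigma$, whence $\zeta_t=w_{tt}+\alpha\varphi_t=f+(\alpha-l)\varphi_t+\kappa\Laplace w_t+\tau\Laplace w-\rho\sigma$. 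The decisive algebraic step is to use $w_t=\zeta-\alpha\varphi+\eta^*$, so that $\kappa\Laplace w_t=\kappa\Laplace\zeta-\kappa\alpha\Laplace\varphi+\kappa\Laplace\eta^*$; this is essential because $\Laplace w_t$ is only bounded in $L^2(0,T;H)$, whereas the reformulation trades it for a diffusion term $\kappa\Laplace\zeta$ of favorable sign plus lower-order terms controllable in $L^\infty(0,T;H)$, turning the previous line into
\[
  \zeta_t-\kappa\Laplace\zeta+\rho\sigma=g,\quadd g:=f+(\alpha-l)\varphi_t-\kappa\alpha\Laplace\varphi+\kappa\Laplace\eta^*+\tau\Laplace w.
\]

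Next I would test this identity with $\sigma$. Because $\sigma\in\Sign(\zeta)$, at a.a.\ times where $\zeta\neq0$ we have $\sigma=\zeta/\norm{\zeta}_H$, so $(\zeta_t,\sigma)=\frac{\de}{\de t}\norm{\zeta}_H$ and $(\sigma,\sigma)=1$. Invoking the further regularity of Theorem~\ref{thrm:further-regularity} (namely $\varphi\in L^\infty(0,T;W)$) together with $w_t\in L^2(0,T;W)$ and $\eta^*\in W$, one gets $\zeta\in W$ for a.a.\ $t$; hence $\partial_n\zeta=0$ and $-(\Laplace\zeta,\sigma)=\norm{\nabla\zeta}_H^2/\norm{\zeta}_H\geq0$. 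Dropping this nonnegative diffusion contribution yields, for a.a.\ $t$ with $\zeta(t)\neq0$,
\[
  \frac{\de}{\de t}\norm{\zeta}_H+\rho\leq(g,\sigma)\leq\norm{g}_H.
\]

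The crux is then to bound $\norm{g}_{L^\infty(0,T;H)}$ by a quantity growing no faster than $\rho^{1/2}$. Here $\norm{f}_{L^\infty(0,T;H)}$ is finite by assumption and $\kappa\norm{\Laplace\eta^*}_H$ is a fixed constant, while the terms $(\alpha-l)\varphi_t$ and $\kappa\alpha\Laplace\varphi$ are controlled by $C_3(1+\rho^{1/2})$ thanks to $\varphi\in W^{1,\infty}(0,T;H)\cap L^\infty(0,T;W)$ from Theorem~\ref{thrm:further-regularity}. The genuinely new term, reflecting the hyperbolicity of~\eqref{eq:heat}, is $\tau\Laplace w$: it is bounded in $L^\infty(0,T;H)$ because the existence estimate~\eqref{eq:existence-bound1} gives $w\in H^1(0,T;W)\hookrightarrow C([0,T];W)$, so that $\norm{\Laplace w}_{L^\infty(0,T;H)}\leq\norm{w}_{C([0,T];W)}\leq C_1$ with $C_1$ independent of $\rho$. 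Collecting, $\norm{g(t)}_H\leq C(1+\rho^{1/2})$ for a.a.\ $t$, with $C$ independent of $\rho$; hence $\frac{\de}{\de t}\norm{\zeta}_H\leq C(1+\rho^{1/2})-\rho=:-\delta(\rho)$, and there exists $\rho^*>0$ such that $\delta(\rho)>0$ for every $\rho>\rho^*$.

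For the finite-time conclusion I would note that $t\mapsto\norm{\zeta(t)}_H$ is absolutely continuous (since $\zeta\in H^1(0,T;H)$ and the norm is Lipschitz), nonnegative, and satisfies $\frac{\de}{\de t}\norm{\zeta}_H\leq-\delta$ at a.a.\ times where it is positive. A standard comparison argument then gives $\norm{\zeta(t)}_H=0$ for all $t\geq T^*:=\norm{\zeta(0)}_H/\delta(\rho)$ and that the norm remains at zero thereafter. Since $\norm{\zeta(0)}_H=\norm{\vartheta_0+\alpha\varphi_0-\eta^*}_H$ is independent of $\rho$ while $\delta(\rho)\to+\infty$, enlarging $\rho^*$ if necessary forces $T^*<T$, which is the assertion. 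I expect the main obstacle to be exactly the uniform-in-$\rho$ control of the hyperbolic term $\tau\Laplace w$ in $L^\infty(0,T;H)$, together with the bookkeeping ensuring that every contribution to $g$ grows at most like $\rho^{1/2}$ so that the feedback gain $-\rho$ dominates for large $\rho$; by contrast the convex-analytic manipulations (that $\zeta\in W$ a.e., giving the favorable sign of the diffusion term) and the extinction comparison lemma are routine once the bound on $g$ is secured.
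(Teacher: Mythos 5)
Your proof is correct and follows essentially the same route as the paper's: the same sliding variable $\eta=w_t+\alpha\varphi-\eta^*$, the same parabolic reformulation $\eta_t-\kappa\Laplace\eta+\rho\sigma=g$ with $\norm{g}_{L^\infty(0,T;H)}\leq C(1+\rho^{1/2})$ drawn from Theorems~\ref{thrm:existence} and~\ref{thrm:further-regularity}, the same test with $\sigma$ exploiting the favorable sign of the diffusion term, and the same finite-time extinction lemma. The only (minor) difference is that the paper runs this computation on the Yosida-regularized solutions and passes to the limit using the uniform convergence of the Moreau--Yosida regularization of the norm, whereas you argue directly on the limit solution via the single-valuedness of $\Sign$ away from the origin on the set $\{\eta\neq0\}$; both implementations are legitimate.
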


\begin{rmrk}
  \label{rmrk:sliding-mode}
  \marginnote{fare un remark simile per il caso B. o meglio, unificare
    in un unico remark.}
  The statement of the \pier{above} theorem gives no estimates for $\rho^*$ and
  $T^*$, but in the proof we will find certain bounds which we summarize
  here.  Define $C_5$ and $\psi_0$ as
  \begin{align}
    \label{eq:c5-definition}
    &C_5=\tau C_1+(\kappa\alpha+|\alpha-l|)C_3+\kappa\norm{\Laplace\eta^*}_H
    +\norm{f}_{L^\infty(0,T;H)},
    \\&
    \label{eq:psi0-def}
    \psi_0=\norm{\vartheta_0+\alpha\varphi_0-\eta^*}_H,
  \end{align}
  where the constants $C_1$ and $C_3$ are given by
  Theorems~\ref{thrm:existence} and~\ref{thrm:further-regularity},
  respectively.  The quantity $\psi_0$ measures how far the initial
  state is from the sliding manifold.  We will see that it is sufficient
  to choose
  \begin{equation}
    \label{eq:bound-on-rho*}
    \rho^*=2\left(\frac{\psi_0}{T}
    +C_5+\frac{C_5^2}{2}\right)
  \end{equation}
  to fulfill the condition described by the theorem.  Moreover, for a
  given $\rho>\rho^*$, we will prove the following bound on $T^*$\pier{:}
  \begin{equation}
    \label{eq:bound-on-T*}
    T^*\leq\frac{2\psi_0}
    {\rho-2C_5-C_5^2}<T.
  \end{equation}

\end{rmrk}

\paragraph{Results for \problem{B}.}
For \problem{B} we have two \pier{theorems regarding well-posedness}.
\begin{thrm}[Existence]
\label{thrm:existenceB}
Assume~\eqref{eq:beta-convex}--
\pier{\eqref{eq:initial-data-hypothesis}} and~\eqref{eq:target-functionB}.
Then there exist two constants $C_6,\, C_7>0$, such that for every
$\rho>0$ the problem~\eqref{eq:heatB}--\eqref{eq:initial-conditionsB}
has at least a solution $(w,\varphi,\xi,\sigma)$
satisfying~\eqref{eq:regw}--\eqref{eq:regsigma} and the 
\pier{estimates} 
  \begin{align}
    &
    \begin{aligned}
    &
    \norm{w}_{W^{1,\infty}(0,T;H)\cap H^1(0,T;V)}
    +\norm{\varphi}_{L^\infty(0,T;H)\cap L^2(0,T;V)}
    +\norm{\sigma}_{L^\infty(0,T;H)}
    \\&
    +\rho\norm{\varphi-\varphi^*}_{L^1(0,T;H)}
    \leq C_6,
    \end{aligned}
    \label{eq:existence-bound1B}
  \\[2mm]&
  \begin{aligned}
    &
    \norm{w}_{H^2(0,T;H)\cap W^{1,\infty}(0,T;V)\cap H^1(0,T;W)}
    +\norm{\varphi}_{
      H^1(0,T;H)\cap L^\infty(0,T;V)\cap L^2(0,T;W)}
    \\&
    +\norm{\xi
      +\rho\sigma}_{L^2(0,T;H)}
    +\norm{\widehat\beta(\varphi)}_{L^\infty(0,T;L^1)}
    \\&
    +\rho\norm{\varphi(t)-\varphi^*}_{L^\infty(0,T;H)}
    \leq C_7(1+\rho^{1/2}).
    \label{eq:existence-bound2B}
  \end{aligned}
  \end{align}
  \pier{Moreover,} the components $w$ and $\varphi$ of the solution are
  uniquely identified.
  Furthermore, the components $\xi$ and $\sigma$ are uniquely
  identified as well, provided that $\beta$ is single-valued.
\end{thrm}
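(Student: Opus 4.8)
The plan is to construct a solution by a double regularization of the two monotone operators, followed by uniform a priori estimates and a limiting procedure, and then to obtain uniqueness of the determined components by a contraction argument. Throughout I write $\vartheta:=w_t$. For parameters $\varepsilon,\lambda>0$ I replace $\beta$ by its Yosida approximation $\beta_\varepsilon$ (single-valued, Lipschitz, with primitive the Moreau--Yosida regularization $\widehat\beta_\varepsilon$ of $\widehat\beta$) and the operator $\Sign=\partial\norm{\cdot}_H$ by $\Sign_\lambda:=\nabla(\norm{\cdot}_H)_\lambda$, the gradient of the Moreau--Yosida regularization of the $H$-norm, which is Lipschitz and satisfies $\norm{\Sign_\lambda(v)}_H\le1$. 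The regularized system replaces \eqref{eq:phaseB}--\eqref{eq:signB} by the single equation $\varphi_t-\Laplace\varphi+\beta_\varepsilon(\varphi)+\pi(\varphi)+\rho\,\Sign_\lambda(\varphi-\varphi^*)=\gamma\vartheta$, keeping \eqref{eq:heatB} and the initial conditions. All nonlinearities being globally Lipschitz, I would solve this coupled hyperbolic--parabolic system by Faedo--Galerkin on the eigenbasis of $-\Laplace$ with Neumann conditions, derive the estimates below at the discrete level, and pass to the limit in the Galerkin index.

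The estimates split into two tiers. The first, $\rho$-independent tier yielding \eqref{eq:existence-bound1B} comes from testing \eqref{eq:heatB} by $\vartheta$ and the regularized phase equation by $\varphi-\varphi^*$: the nonlocal term produces $\rho\int_\Omega\Sign_\lambda(\varphi-\varphi^*)(\varphi-\varphi^*)\ge0$ with no time derivative, so after integration it furnishes directly the bound on $\rho\norm{\varphi-\varphi^*}_{L^1(0,T;H)}$; the term $\int_\Omega\beta_\varepsilon(\varphi)(\varphi-\varphi^*)$ is bounded below by monotonicity of $\beta$ together with the hypothesis $\beta^\circ(\varphi^*)\in H$ from \eqref{eq:target-functionB}, while the heat--phase coupling is absorbed by scaling the phase equation by $l/\gamma$ and Gronwall's lemma. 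The second tier, giving the $(1+\rho^{1/2})$-bounds of \eqref{eq:existence-bound2B}, comes from testing the phase equation by $\varphi_t$ (so that $\rho\int_\Omega\Sign_\lambda(\varphi-\varphi^*)\varphi_t=\rho\,\frac{d}{dt}(\norm{\cdot}_H)_\lambda(\varphi-\varphi^*)$) and from reading $w_{tt}$, $\Laplace w$, $\Laplace w_t$ and $\xi+\rho\sigma$ off the equations once $\varphi_t$ and $\Laplace\varphi$ are controlled. The half-power of $\rho$ appears because the control contributes, after integration, the term $\rho\,(\norm{\cdot}_H)_\lambda(\varphi_0-\varphi^*)\le\rho\norm{\varphi_0-\varphi^*}_H$ to the right-hand side of a squared energy, which is then square-rooted.

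From both tiers I extract subsequences with the weak/weak-$*$ convergences matching the regularity \eqref{eq:regw}--\eqref{eq:regsigma}, and, by Aubin--Lions (since $\varphi$ is bounded in $L^2(0,T;W)\cap H^1(0,T;H)$), strong convergence $\varphi_{\varepsilon,\lambda}\to\varphi$ in $C([0,T];H)$ and in $L^2(0,T;V)$. Strong convergence identifies $\pi(\varphi)$; for $\xi$, the weak limit of $\beta_\varepsilon(\varphi_{\varepsilon,\lambda})$ together with strong convergence of $\varphi_{\varepsilon,\lambda}$ and the weak--strong closedness of the maximal monotone graph $\beta$ gives $\xi\in\beta(\varphi)$ a.e. The identification $\sigma\in\Sign(\varphi-\varphi^*)$ is the delicate point and is exactly where the uniform convergence of the Moreau--Yosida regularization of the norm enters: passing to the limit in $(\Sign_\lambda(\varphi-\varphi^*),v-(\varphi-\varphi^*))\le(\norm{\cdot}_H)_\lambda(v)-(\norm{\cdot}_H)_\lambda(\varphi-\varphi^*)$ for all $v\in H$, using $(\norm{\cdot}_H)_\lambda\to\norm{\cdot}_H$ uniformly and the strong convergence of $\varphi$, yields $(\sigma,v-(\varphi-\varphi^*))\le\norm{v}_H-\norm{\varphi-\varphi^*}_H$, i.e.\ $\sigma\in\partial\norm{\cdot}_H(\varphi-\varphi^*)$. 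The initial conditions are recovered from the continuity in time.

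For uniqueness I take two solutions with the same data and write the differences $\bar w,\bar\varphi,\bar\xi,\bar\sigma$. Testing the difference of \eqref{eq:heatB} by $\bar w_t$ and the difference of \eqref{eq:phaseB} by $\bar\varphi$, both monotonicities now act in the same equation and in our favour, since $\int_\Omega\bar\xi\,\bar\varphi\ge0$ and $\rho\int_\Omega\bar\sigma\,\bar\varphi\ge0$; the Lipschitz term $\pi(\varphi_1)-\pi(\varphi_2)$ and the heat--phase coupling are handled by Young's inequality and Gronwall, so that, unlike in \problem{A}, no relation between $l$ and $\alpha$ is needed. This forces $\bar w=\bar\varphi=0$, giving uniqueness of $w$ and $\varphi$; then \eqref{eq:phaseB} determines the sum $\xi+\rho\sigma=\gamma w_t+\Laplace\varphi-\varphi_t-\pi(\varphi)$ uniquely (which is why \eqref{eq:existence-bound2B} bounds precisely $\norm{\xi+\rho\sigma}_{L^2(0,T;H)}$), and if $\beta$ is single-valued then $\xi=\beta(\varphi)$ is determined, whence $\sigma$ is determined as well. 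The hardest part is the second-tier, $\rho$-explicit estimate in the presence of the hyperbolic equation \eqref{eq:heatB}: one must track precisely how $\rho$ enters the energy identities in order to close them with the announced $\rho^{1/2}$ rate while keeping the nonlocal sign term under control.
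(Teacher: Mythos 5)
Your overall architecture --- regularization of both $\beta$ and $\Sign$, Faedo--Galerkin, a $\rho$-independent tier of estimates followed by a $(1+\rho^{1/2})$ tier, identification of the monotone terms in the limit, and uniqueness of $(w,\varphi)$ by an energy estimate with both monotonicities acting favourably in the phase equation --- is the same as the paper's. Your identification of $\sigma$ via the subdifferential inequality and the uniform convergence $\norm{\cdot}_{H,\varepsilon}\to\norm{\cdot}_H$ is a legitimate variant of the paper's lim-inf-of-products argument.

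There is, however, one genuine gap, and it occurs twice. In the first tier you test \eqref{eq:heatB} by $\vartheta=w_t$. Since the equation is $(w_t+l\varphi)_t-\kappa\Laplace w_t-\tau\Laplace w=f$, this produces the coupling term $l\,(\varphi_t,w_t)$, and at the $\rho$-independent level you have no control whatsoever on $\varphi_t$: the bound $\norm{\varphi_t}_{L^2(0,T;H)}\leq C(1+\rho^{1/2})$ is precisely the second tier, so ``absorbing'' $l(\varphi_t,w_t)$ by Gronwall is circular, and cancelling it against the phase equation would force you to test the phase equation by a multiple of $\varphi_t$, which reintroduces the term $\rho\norm{\varphi_0-\varphi^*}_H$ and destroys the $\rho$-independence of \eqref{eq:existence-bound1B}. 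The missing idea is to test the heat equation by the enthalpy $w_t+l\varphi$, so that
\begin{equation*}
\bigl((w_t+l\varphi)_t,\,w_t+l\varphi\bigr)=\tfrac12\tfrac{\de}{\de t}\norm{w_t+l\varphi}_H^2
\end{equation*}
and no isolated $(\varphi_t,w_t)$ term ever appears; the phase equation is then tested by $\kappa l^2(\varphi-\varphi^*)$, the factor $\kappa l^2$ being chosen exactly so that the remaining cross term $\kappa l(\nabla w_t,\nabla\varphi)$ is absorbed by $\kappa\norm{\nabla w_t}_H^2+\kappa l^2\norm{\nabla\varphi}_H^2$. The same defect reappears in your uniqueness argument: testing the difference of \eqref{eq:heatB} by $\bar w_t$ again produces $l(\bar\varphi_t,\bar w_t)$ with $\bar\varphi_t$ uncontrolled; the paper's continuous-dependence proof uses $\bar w_t+l\bar\varphi$ and $\kappa l^2\bar\varphi$ for the same reason. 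A secondary, more minor point: the bounds on $w$ in $W^{1,\infty}(0,T;V)\cap H^1(0,T;W)$ cannot simply be ``read off'' \eqref{eq:heatB}, since $\Laplace w_t$, $\Laplace w$ and $w_{tt}$ all appear there and none is yet individually controlled; one needs the additional estimate obtained by testing the heat equation by $-\Laplace(w_t+l\varphi)$, using the already established $L^2(0,T;H)$ bound on $\Laplace\varphi$, before concluding $w\in H^2(0,T;H)$ by comparison.
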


\begin{thrm}[Continuous dependence]\label{thrm:continuous-dependenceB}
Suppose~\eqref{eq:beta-convex}--\eqref{eq:datanum},
\eqref{eq:target-functionB}, $\rho>0$, and $l=\alpha$.
Let $i=1, 2$.
We consider
$(\vartheta_{0,i},w_{0,i},\varphi_{0,i},f_i,w_i,\varphi_i,\xi_i,\sigma_i)$,
where the functions $(\vartheta_{0,i},w_{0,i},\varphi_{0,i})$ are
initial data satifying equation~\eqref{eq:initial-data-hypothesis},
$f_i$ is a function satisfying~\eqref{eq:dataf}, and
$(w_i,\varphi_i,\xi_i,\sigma_i)$ is a solution for \problem{B} given by
Theorem~\ref{thrm:existenceB} with
$(\vartheta_{0},w_{0},\varphi_{0})=(\vartheta_{0,i},w_{0,i},\varphi_{0,i})$
and $f=f_i$.
Then, there exists a constant $C_8>0$, independent of $\vartheta_{0,i},
w_{0,i}, \varphi_{0,i}, f_i$, and $\rho$, such that\marginnote{notare
  l'indipendenza da $\rho$}
\begin{equation}\label{eq:continuous-dependenceB}
  \begin{split}
    &\norm{w_1-w_2}_{W^{1,\infty}(0,T;H)\cap H^1(0,T;V)}
    +\norm{\varphi_1-\varphi_2}_{L^\infty(0,T;H)\cap L^2(0,T;V)}
    \\&
    \leq C_8(\norm{\vartheta_{0,1}-\vartheta_{0,1}}_H
    +\norm{w_{0,1}-w_{0,2}}_V
    \pier{{}+\norm{\varphi_{0,1}-\varphi_{0,2}}_H+\norm{f_1-f_2}_{L^2(Q)}).}
  \end{split}
\end{equation}
\end{thrm}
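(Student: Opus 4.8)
The plan is to run the standard energy method on the difference of the two solutions, the one decisive point being to preserve the good sign of the control term so that the resulting constant $C_8$ is independent of $\rho$. Set $\conj w:=w_1-w_2$, $\conj\varphi:=\varphi_1-\varphi_2$, $\conj\vartheta:=\conj w_t=w_{1,t}-w_{2,t}$, $\conj\xi:=\xi_1-\xi_2$, $\conj\sigma:=\sigma_1-\sigma_2$, and likewise $\conj f$, $\conj\vartheta_0$, $\conj w_0$, $\conj\varphi_0$ for the data. Subtracting the two copies of \eqref{eq:heatB} and \eqref{eq:phaseB} gives, with homogeneous Neumann conditions and the initial data $\conj\vartheta(0)=\conj\vartheta_0$, $\conj w(0)=\conj w_0$, $\conj\varphi(0)=\conj\varphi_0$,
\begin{align*}
  &\conj\vartheta_t+l\conj\varphi_t-\kappa\Laplace\conj\vartheta-\tau\Laplace\conj w=\conj f,\\
  &\conj\varphi_t-\Laplace\conj\varphi+\conj\xi+\pi(\varphi_1)-\pi(\varphi_2)+\rho\conj\sigma=\gamma\conj\vartheta.
\end{align*}

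First I would test the heat difference with $\conj\vartheta=\conj w_t$ and integrate over $(0,t)$; since $-\tau(\Laplace\conj w,\conj w_t)=\tfrac{\tau}{2}\tfrac{d}{dt}\norm{\nabla\conj w}_H^2$, this yields the damped-wave energy $\tfrac12\norm{\conj\vartheta(t)}_H^2+\tfrac{\tau}{2}\norm{\nabla\conj w(t)}_H^2+\kappa\int_0^t\norm{\nabla\conj\vartheta}_H^2$ on the left, the initial contribution $\tfrac12\norm{\conj\vartheta_0}_H^2+\tfrac{\tau}{2}\norm{\nabla\conj w_0}_H^2$, the source $\int_0^t(\conj f,\conj\vartheta)$, and the coupling $-l\int_0^t(\conj\varphi_s,\conj\vartheta)$. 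In parallel I would test the phase difference with $\conj\varphi$ and integrate: here the two monotone terms carry the right sign, namely $(\conj\xi,\conj\varphi)\geq0$ because $\beta$ is monotone with $\xi_i\in\beta(\varphi_i)$, and $\rho(\conj\sigma,\conj\varphi)\geq0$ because $\Sign=\partial\norm{\cdot}_H$ is monotone and $\sigma_i\in\Sign(\varphi_i-\varphi^*)$, so the common $\varphi^*$ cancels in the pairing; both terms may therefore be discarded. What survives is $\tfrac12\norm{\conj\varphi(t)}_H^2+\int_0^t\norm{\nabla\conj\varphi}_H^2\leq\tfrac12\norm{\conj\varphi_0}_H^2+L\int_0^t\norm{\conj\varphi}_H^2+\gamma\int_0^t(\conj\vartheta,\conj\varphi)$, where $L$ is the Lipschitz constant of $\pi=\widehat\pi'$, and the last term is harmless by Young's inequality.

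The main obstacle is the coupling term $-l\int_0^t(\conj\varphi_s,\conj\vartheta)$ in the heat estimate. The naive remedy of testing the phase equation with $\conj\varphi_t$ must be \emph{avoided}, since it would reintroduce $\rho(\conj\sigma,\conj\varphi_t)$, a term of indefinite sign scaled by $\rho$, and would ruin the $\rho$-independence. Instead I would integrate by parts in time, $\int_0^t(\conj\varphi_s,\conj\vartheta)=(\conj\varphi(t),\conj\vartheta(t))-(\conj\varphi_0,\conj\vartheta_0)-\int_0^t(\conj\varphi,\conj\vartheta_s)$, and then substitute $\conj\vartheta_s=\kappa\Laplace\conj\vartheta+\tau\Laplace\conj w+\conj f-l\conj\varphi_s$ read off from the heat difference itself. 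Integrating the Laplacians by parts (the Neumann conditions kill the boundary terms), this converts the bad term into: the endpoint pairing $-l(\conj\varphi(t),\conj\vartheta(t))$, absorbed by Young's inequality into $\tfrac12\norm{\conj\vartheta(t)}_H^2$ together with the favorable term $-\tfrac{l^2}{2}\norm{\conj\varphi(t)}_H^2$ produced by the same integration by parts; the gradient pairings $-l\kappa\int_0^t(\nabla\conj\varphi,\nabla\conj\vartheta)$ and $-l\tau\int_0^t(\nabla\conj\varphi,\nabla\conj w)$, absorbed into $\kappa\int_0^t\norm{\nabla\conj\vartheta}_H^2$ and $\int_0^t\norm{\nabla\conj\varphi}_H^2$ up to a Gronwall remainder in $\int_0^t\norm{\nabla\conj w}_H^2$; and the source pairing $l\int_0^t(\conj\varphi,\conj f)$. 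The crucial point is that this substitution uses only the heat equation, so it never meets $\conj\xi$ or $\rho\conj\sigma$, and hence introduces no dependence on $\rho$.

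Finally I would add the two processed estimates, choose the Young parameters small enough to absorb the boundary and gradient cross terms into $\tfrac12\norm{\conj\vartheta(t)}_H^2$, $\kappa\int_0^t\norm{\nabla\conj\vartheta}_H^2$ and $\int_0^t\norm{\nabla\conj\varphi}_H^2$, bound every source contribution by $\norm{\conj f}_{L^2(Q)}^2$, and collect the data terms into $\norm{\conj\vartheta_0}_H^2+\norm{\conj w_0}_V^2+\norm{\conj\varphi_0}_H^2$. Gronwall's lemma then controls $\norm{\conj\vartheta(t)}_H^2+\norm{\nabla\conj w(t)}_H^2+\norm{\conj\varphi(t)}_H^2+\int_0^t\bigl(\norm{\nabla\conj\vartheta}_H^2+\norm{\nabla\conj\varphi}_H^2\bigr)$ with a constant independent of $\rho$. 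The remaining target norms follow routinely: $\norm{\conj w}_{L^\infty(0,T;H)}$ from $\conj w(t)=\conj w_0+\int_0^t\conj\vartheta$, and the full $V$-norms by combining the gradient bounds with the $L^\infty(0,T;H)$ bounds through $\int_0^T\norm{\cdot}_H^2\leq T\norm{\cdot}_{L^\infty(0,T;H)}^2$, which gives exactly \eqref{eq:continuous-dependenceB}. The hypothesis $l=\alpha$ is inherited from the statement of Theorem~\ref{thrm:continuous-dependence}, but since $\alpha$ does not enter Problem~B it is not actually used in this argument.
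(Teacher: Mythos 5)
Your proposal is correct and is essentially the paper's own argument: the paper proves continuous dependence for \problem{B} exactly as for \problem{A}, namely by testing the heat difference against $w_t+l\varphi$ and the phase difference against a multiple of $\varphi$, discarding the two monotone terms $(\xi_1-\xi_2,\varphi_1-\varphi_2)\geq0$ and $\rho(\sigma_1-\sigma_2,\varphi_1-\varphi_2)\geq0$, and concluding with Young and Gronwall. Your treatment of the coupling term --- integrating $-l\int_0^t(\varphi_s,w_t)$ by parts in time and substituting $w_{tt}$ from the heat equation --- reproduces term by term the identity obtained by testing directly with $w_t+l\varphi$ (it reassembles $\tfrac12\|w_t(t)+l\varphi(t)\|_H^2$ on the left and the same gradient cross terms), so the route is the same up to bookkeeping; your observation that $l=\alpha$ is not actually used for \problem{B} is also correct, since there the $\Sign$ term sits in the phase equation and is paired with $\varphi_1-\varphi_2$ rather than with $w_t+\alpha\varphi$.
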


For the sake of completeness, we give a regularity result similar to
the one of \problem{B}, although it is not necessary for proving the
sliding mode \pier{result.}

\begin{thrm}[Further regularity]\label{thrm:further-regularityB}
Assume the \pier{same hypotheses as in} Theorem~\ref{thrm:existenceB} and the
condition~\eqref{eq:further-initial-data}.
Then the components $\varphi$ and $\xi$ of a solution $(w,\varphi,\xi,\sigma)$
given by Theorem~\ref{thrm:existenceB} satisfy
\begin{gather}
\varphi\in
W^{1,\infty}(0,T;H)\cap H^1(0,T;V)\cap L^\infty (0,T;W),
\\
\xi\in L^\infty(0,T;H)
\end{gather}
and 
\begin{equation}\label{eq:further-regularityB}
\norm{\varphi}_{
  W^{1,\infty}(0,T;H)\cap H^1(0,T;V)\cap L^\infty (0,T;W)}
+\norm{\xi}_{
  L^\infty(0,T;H)}
\leq C_9(1+\rho),
\end{equation}
where $C_9>0$ is a constant independent of $\rho$.
\marginnote{dire meglio l'indipendenza da $\rho$.}
\end{thrm}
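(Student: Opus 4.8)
The plan is to upgrade the time regularity of $\varphi$ first, and then to read off the spatial $W$-regularity of $\varphi$ and the $L^\infty(0,T;H)$ bound on $\xi$ from the phase equation viewed, at each fixed time, as an elliptic problem. The starting point is the observation that Theorem~\ref{thrm:existenceB}, and in particular the estimate~\eqref{eq:existence-bound2B}, already provides $w\in H^2(0,T;H)$ with $\norm{w_{tt}}_{L^2(0,T;H)}\le C_7(1+\rho^{1/2})$; this is precisely the forcing regularity needed to differentiate~\eqref{eq:phaseB} in time. The argument will run parallel to that of Theorem~\ref{thrm:further-regularity}, the essential difference being that the feedback term $\rho\sigma$ now sits in the phase equation rather than in the heat equation, which is what degrades the final $\rho$-dependence from $\rho^{1/2}$ to $\rho$.

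First I would differentiate~\eqref{eq:phaseB} formally with respect to $t$ and test with $\varphi_t$, obtaining
\begin{equation*}
  \tfrac12\tfrac{\de}{\de t}\norm{\varphi_t}_H^2+\norm{\nabla\varphi_t}_H^2
  +(\partial_t\xi,\varphi_t)+(\pi'(\varphi)\varphi_t,\varphi_t)
  +\rho(\partial_t\sigma,\varphi_t)=\gamma(w_{tt},\varphi_t).
\end{equation*}
The two delicate products have a good sign: since $\xi\in\beta(\varphi)$ with $\beta$ monotone one has $(\partial_t\xi,\varphi_t)\ge0$, and since $\sigma\in\Sign(\varphi-\varphi^*)$ with $\varphi^*$ independent of time, monotonicity of the subdifferential gives $(\partial_t\sigma,\varphi_t)\ge0$ as well, so the whole feedback contribution $\rho(\partial_t\sigma,\varphi_t)$ may be discarded. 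The term involving $\pi'$ is controlled by the Lipschitz constant of $\pi$, and the right-hand side by Young's inequality against $\norm{w_{tt}}_H^2$. A Gronwall argument then yields a bound on $\norm{\varphi_t}_{L^\infty(0,T;H)}$ and $\norm{\varphi_t}_{L^2(0,T;V)}$ in terms of $\norm{\varphi_t(0)}_H$ and $\norm{w_{tt}}_{L^2(0,T;H)}$. The value $\varphi_t(0)$ is computed from~\eqref{eq:phaseB} at $t=0$, namely $\varphi_t(0)=\gamma\vartheta_0+\Laplace\varphi_0-\xi(0)-\pi(\varphi_0)-\rho\sigma(0)$: here~\eqref{eq:further-initial-data} bounds $\Laplace\varphi_0$ and, through $\beta^\circ(\varphi_0)\in H$, the initial selection $\xi(0)$, while $\norm{\sigma(0)}_H\le1$ forces $\rho\norm{\sigma(0)}_H\le\rho$. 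This is where the linear growth in $\rho$ enters and gives $\norm{\varphi_t(0)}_H\le C(1+\rho)$; combined with $\norm{w_{tt}}_{L^2(0,T;H)}\le C_7(1+\rho^{1/2})$, the Gronwall estimate produces $\varphi\in W^{1,\infty}(0,T;H)\cap H^1(0,T;V)$ with the bound $C_9(1+\rho)$.

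With $\varphi_t\in L^\infty(0,T;H)$ in hand, I would rewrite~\eqref{eq:phaseB} at a.e.\ fixed $t$ as the stationary problem $-\Laplace\varphi+\xi=g$, $\xi\in\beta(\varphi)$, $\partial_n\varphi=0$, where $g:=\gamma w_t-\varphi_t-\pi(\varphi)-\rho\sigma$. Each summand of $g$ lies in $L^\infty(0,T;H)$: $w_t$ by $w\in W^{1,\infty}(0,T;V)$, $\varphi_t$ by the previous step, $\pi(\varphi)$ by the Lipschitz bound, and $\rho\sigma$ with $\norm{\sigma}_{L^\infty(0,T;H)}\le1$; hence $\norm{g}_{L^\infty(0,T;H)}\le C(1+\rho)$. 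Testing the stationary equation by $\xi$ and using that $-(\Laplace\varphi,\xi)\ge0$ for $\xi\in\beta(\varphi)$ under homogeneous Neumann data (the standard inequality for maximal monotone graphs composed with the Laplacian) gives $\norm{\xi}_H\le\norm{g}_H$, and then $\norm{\Laplace\varphi}_H\le2\norm{g}_H$. This yields $\xi\in L^\infty(0,T;H)$ and $\varphi\in L^\infty(0,T;W)$, again with the bound $C_9(1+\rho)$, completing~\eqref{eq:further-regularityB}.

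The one genuinely technical point is that $\xi$ and $\sigma$ are not a priori differentiable in time, so the first display above is only formal. The rigorous route is to carry out the computation on the approximating problems used in Theorem~\ref{thrm:existenceB}, where $\beta$ and $\Sign$ are replaced by their Lipschitz Yosida regularizations and all quantities are smooth in time; the monotonicity signs survive at the approximate level, the estimates are uniform in the regularization parameters, and one passes to the limit. Equivalently, one may replace the time derivative by incremental quotients and invoke the monotonicity inequalities $(\beta(\varphi(t+h))-\beta(\varphi(t)),\varphi(t+h)-\varphi(t))\ge0$ and the analogue for $\Sign$. I expect this justification, together with the bookkeeping that keeps the $\rho$-dependence sharp, to be the main obstacle; everything else is a routine parabolic energy estimate followed by an elliptic regularity estimate.
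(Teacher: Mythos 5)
Your proposal is correct and follows essentially the same route as the paper: differentiate the phase equation in time at the level of the Yosida/Faedo--Galerkin approximations (where the paper's Lemma~\ref{lem:sign-derivative} supplies exactly the rigorous version of your formal inequality $(\partial_t\sigma,\varphi_t)\ge 0$), discard the monotone contributions of $\beta_\varepsilon$ and $\Sign_\varepsilon$, extract the linear $\rho$-dependence from $\rho\norm{\sigma(0)}_H\le\rho$ in the bound on $\norm{\varphi_t(0)}_H$, apply Gronwall, and finish with the stationary elliptic estimate for $-\Laplace\varphi+\xi=g$. The only cosmetic difference is in the last step, where the paper tests by $-\Laplace\varphi_\varepsilon$ and recovers $\xi_\varepsilon$ by comparison rather than testing by $\xi$ first; both are standard and equivalent.
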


\begin{thrm}[Sliding mode]\label{thrm:sliding-modeB}
Assume~\eqref{eq:beta-convex}--
\pier{\eqref{eq:initial-data-hypothesis}} and~\eqref{eq:target-functionB}.
Then there exist $\rho^*>0$, such that the following condition is
fulfilled\pier{: for} every $\rho>\rho^*$ and for every solution
$(w,\varphi,\xi,\sigma)$ to the
problem~\eqref{eq:heatB}--\eqref{eq:initial-conditionsB} there \pier{exists} a
time $T^*\in[0,T)$, such that
\begin{equation}
  \varphi(t)=\varphi^*,\quadd\text{ a.e.\ in }\Omega,
  \text{ for a.a.\ }t\in(T^*,T).
\end{equation}
\end{thrm}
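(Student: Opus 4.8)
The plan is to monitor the $H$-distance of the phase from its target and to show that this distance decreases at a uniform positive rate until it hits zero, after which it must stay there. Set $\eta(t):=\varphi(t)-\varphi^*$ and $\psi(t):=\norm{\eta(t)}_H$. Since $\varphi^*$ is time-independent and belongs to $W$, the function $\eta$ inherits the regularity of $\varphi$; in particular $\eta\in H^1(0,T;H)\cap L^2(0,T;W)$, it satisfies the homogeneous Neumann condition, and subtracting the (time-independent) contributions of $\varphi^*$ from~\eqref{eq:phaseB} gives
\[
\eta_t-\Laplace\eta+\rho\sigma=\gamma w_t+\Laplace\varphi^*-\xi-\pi(\varphi)\quadd\text{a.e.\ in }Q,
\]
with $\sigma\in\Sign(\eta)$ by~\eqref{eq:signB}.

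First I would differentiate $\psi$. Because $\eta\in H^1(0,T;H)$ and $\sigma\in L^\infty(0,T;H)\subset L^2(0,T;H)$ is a measurable selection of $\Sign(\eta)=\partial\norm{\cdot}_H(\eta)$, the chain rule for convex functions along Bochner--Sobolev trajectories shows that $\psi$ is absolutely continuous and $\psi'(t)=(\sigma(t),\eta_t(t))$ for a.a.\ $t$, the value being the same for every such selection. This is precisely the point where the Moreau--Yosida regularization of $\norm{\cdot}_H$ and the auxiliary lemma announced in the introduction do the real work, since they make the differentiation legitimate at the instants where $\eta$ vanishes. Pairing the rewritten equation with $\sigma(t)$ then yields, for a.a.\ $t$,
\[
\psi'(t)+\rho\,\norm{\sigma(t)}_H^2=(\sigma,\Laplace\eta)+(\sigma,\gamma w_t+\Laplace\varphi^*-\xi-\pi(\varphi)).
\]

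Next I would restrict attention to $\{\psi>0\}$; on $\{\psi=0\}$ one has $\psi'=0$ a.e., so nothing is required there. On $\{\psi>0\}$ we have $\eta\neq0$, $\sigma=\eta/\norm{\eta}_H$, hence $\norm{\sigma}_H=1$ and, integrating by parts with the Neumann condition, $(\sigma,\Laplace\eta)=-\norm{\nabla\eta}_H^2/\norm{\eta}_H\leq0$. The remaining terms are controlled by $\rho$-independent constants coming from Theorem~\ref{thrm:existenceB}: from~\eqref{eq:existence-bound1B} we get $(\gamma w_t,\sigma)\leq\gamma\norm{w_t}_H\leq\gamma C_6$ and $\norm{\varphi}_H\leq C_6$, so the Lipschitz bound~\eqref{eq:pi-Lipschitz} makes $-(\pi(\varphi),\sigma)$ bounded; $(\Laplace\varphi^*,\sigma)\leq\norm{\Laplace\varphi^*}_H$ is fixed since $\varphi^*\in W$; and, writing $\xi^*:=\beta^\circ(\varphi^*)\in H$ (available by~\eqref{eq:target-functionB}), monotonicity of $\beta$ gives $(\xi-\xi^*,\eta)\geq0$, whence $-(\xi,\sigma)\leq-(\xi^*,\sigma)\leq\norm{\beta^\circ(\varphi^*)}_H$. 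Collecting these estimates produces a constant $\bar C>0$ depending only on the data and on $C_6$, but \emph{not} on $\rho$, such that
\[
\psi'(t)\leq-\rho+\bar C\quadd\text{a.e.\ on }\{\psi>0\}.
\]

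Finally I would conclude by finite-time extinction. Setting $\rho^*:=\bar C+\psi(0)/T$ with $\psi(0)=\norm{\varphi_0-\varphi^*}_H$, every $\rho>\rho^*$ yields $c:=\rho-\bar C>\psi(0)/T>0$, so $\psi$ decreases at rate at least $c$ wherever positive. By the elementary comparison lemma for nonnegative absolutely continuous functions obeying $\psi'\leq-c$ on $\{\psi>0\}$, one obtains $\psi(t)\leq(\psi(0)-ct)^+$; hence $\psi$ reaches zero at $T^*:=\psi(0)/c<T$, and any later excursion into $\{\psi>0\}$ would again be forced to decrease, contradicting continuity, so $\psi\equiv0$ on $(T^*,T)$, i.e.\ $\varphi(t)=\varphi^*$ a.e.\ in $\Omega$ for a.a.\ $t\in(T^*,T)$. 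The main obstacle throughout is the rigorous differentiation of $t\mapsto\norm{\eta(t)}_H$ where $\eta$ vanishes, which the chain-rule/auxiliary lemma resolves; once the uniform-in-$\rho$ bounds of Theorem~\ref{thrm:existenceB} are in hand, the rest is the comparison argument. Note that, in contrast with Problem~(A), no extra regularity of the data beyond~\eqref{eq:initial-data-hypothesis} and~\eqref{eq:target-functionB} and no $L^\infty$-in-time assumption on $f$ are needed, because here the control acts directly in the equation governing $\eta$ and the relevant forcing $\gamma w_t$ is already in $L^\infty(0,T;H)$ by~\eqref{eq:existence-bound1B}.
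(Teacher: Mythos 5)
Your proposal is correct and follows the same overall strategy as the paper: take $\psi(t)=\norm{\varphi(t)-\varphi^*}_H$ as the Lyapunov functional, test the phase equation with the selection $\sigma$, kill the Laplacian term by integration by parts and the $\beta$-term by monotonicity against $\beta^\circ(\varphi^*)$, bound the remaining forcing by a $\rho$-independent constant (your $\bar C$ is the paper's $C_{10}$ from~\eqref{eq:c9-definition}), and conclude by the finite-time extinction lemma (Lemma~\ref{lem:decreasing} in the paper), with the same choice $\rho^*=\psi_0/T+C_{10}$ and the same bound on $T^*$. The one genuine difference is where the differential inequality for $\psi$ is established. The paper never differentiates $t\mapsto\norm{\eta(t)}_H$ on the limit problem: it works at the level of the Yosida-approximated solutions, where $\Sign_\varepsilon$ is the Fr\'echet differential of $\norm{\cdot}_{H,\varepsilon}$ so that $(\partial_t\chi_\varepsilon,\sigma_\varepsilon)=\frac{\de}{\de t}\norm{\chi_\varepsilon}_{H,\varepsilon}$ is immediate, integrates over $(t,t+h)$, passes to the limit $\varepsilon\to0$ using the uniform convergence~\eqref{eq:quasi-norm-uniform} of the Moreau--Yosida regularization and weak lower semicontinuity for the $\rho\int\norm{\sigma_\varepsilon}_H^2$ term, and only then divides by $h$. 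You instead differentiate $\norm{\eta(t)}_H$ directly on the limit problem via the chain rule for the subdifferential of a convex functional along $H^1(0,T;H)$ trajectories; this is legitimate (it is the standard Br\'ezis chain-rule lemma, and on $\{\psi>0\}$ it is even elementary), and it buys a shorter argument that avoids revisiting the approximation scheme, at the price of invoking that chain rule as a black box --- your attribution of this step to ``the Moreau--Yosida regularization and the auxiliary lemma'' is the one vague point, since in the paper those tools are used precisely to \emph{avoid} differentiating the non-smooth norm at the limit level. Your closing observation that, unlike Problem~(A), no further regularity of the data and no $L^\infty$-in-time assumption on $f$ are needed because the forcing $\gamma w_t$ is already controlled in $L^\infty(0,T;H)$ by~\eqref{eq:existence-bound1B} is consistent with the hypotheses of Theorem~\ref{thrm:sliding-modeB}.
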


\begin{rmrk}
We can make \pier{a remark similar to Remark}~\ref{rmrk:sliding-mode}.
In this case we define
\begin{align}
  \label{eq:c9-definition}
  &C_{10}=\gamma C_6+\norm{\beta(\varphi^*)}_H+\mathrm{Lip}(\pi) C_6
  +\pi(0)(T|\Omega|)^{1/2}
  +\norm{\Laplace\varphi^*}_H,
  \\&
  \label{eq:psi0-definitionB}
  \psi_0=\norm{\varphi_0-\varphi^*}_H,
\end{align}
where the constant $C_6$ is given by Theorem~\ref{thrm:existenceB} and
$\mathrm{Lip}(\pi)$ denotes the Lipschitz constant of $\pi$.
We may choose
\begin{equation}
  \rho^*=\frac{\psi_0}{T} + C_{10},
\end{equation}
and we have the following \pier{bound for $T^*$:}
\begin{equation}
  T^*\leq\frac{\psi_0}{\rho-C_{10}}< T.
\end{equation}
\end{rmrk}

We recall for the reader's convenience a slightly modified version of
the Young inequality.
For $a, b, r\in \real$ and $a, b, r>0$ it holds true that
\begin{equation}
  \label{eq:modified-young}
  ab\leq\frac{1}{2r}a^2+\frac{r}{2}b^2.
\end{equation}
We refer to the equation above as the Young inequality and it will be
widely used throughout this work.
For simplicity, we will often omit to write $\de x$, $\de s$, etc., in
integrals.
In order to keep the formulae clear and to avoid boring calculations,
we will use the symbol $C$ to denote a large-enough constant.
This means that $C$ may change from line to line and even in the same
chain of inequalities, and that its value is chosen to satisfy the
inequality where it appears.
While $C$ may depend only on the data of the problem, e.g.\ $\Omega$,
$T$ and the initial data, \pier{$C$ is always} independent of $\rho$ and the
later-introduced parameter $\varepsilon$.
In the proofs of continuous dependence of the solutions, $C$ will be
also independent of the data \pier{corresponding to the single solutions}.

\section{Existence proofs}
The proofs of the existence \pier{results} in Theorems~\ref{thrm:existence}
and~\ref{thrm:existenceB} go along the \pier{following line}.
First of all, we will introduce the Yosida approximations of $\Sign$
and $\beta$.
In the following subsection, we will make use of the Faedo--Galerkin
method, in order to approximate the solutions.
Then we will make certain a priori estimates that give uniform bounds
\pier{on} the approximate solutions.
Finally we will take the limit of the approximate solutions and we
will prove that the limit is actually a solution to the problem.

\subsection{Yosida approximations}
In this subsection we recall a few facts regarding the theory of the
Yosida approximation of maximal monotone operators and the
Moreau--Yosida regularization of convex functions (see
e.g.~\cite[Ch.~2]{brezis73} or~\cite[Ch.~2]{barbu10} for an
introduction to Yosida approximation; see~\cite[Ch.~15]{dalmaso93} for
Yosida regularization in metric spaces).
After considering the abstract case, we will soon apply the results to
the functions $\widehat\beta$ and $\norm{\cdot}_H$.

We start with the definition of Moreau--Yosida regularization.
Given a Hilbert space $X$ (whose norm is denoted by $\norm{\cdot}$), a
proper, convex, l.s.c.\ function $\Phi:X\to[0,+\infty]$, and
$\varepsilon>0$, we define the Moreau--Yosida regularization
$\Phi_\varepsilon$ as
\begin{equation}
  \label{eq:moreau-yosida-def}
  \Phi_\varepsilon(v)=\inf_{w\in X}
  \left\{\tfrac{1}{2\varepsilon}\norm{v-w}^2+\Phi(w)\right\}.
\end{equation}
We incidentally notice  that the infimum in the
definition above is attained.
%
The following proposition \pier{summarizes} the properties, which \pier{will be used}
later on, of the Moreau--Yosida
regularization.
\begin{prop}
  \label{prop:yosida}
  Let $\Phi:X\to[0,+\infty]$ be a convex, proper, l.s.c.\ function.
  Then, the following conclusions hold
  \begin{enumerate}
  \item $\Phi_\varepsilon$ is convex and continuous;
  \item
    $\Phi_\varepsilon(v)<+\infty$ and
    $\Phi_\varepsilon(v)\leq\Phi(x)$ for all $v\in X$;
  \item
    $\Phi_\varepsilon(v)$ converges monotonically to $\Phi(v)$ as
    $\varepsilon\to0$;
  \item
    $\liminf_{\varepsilon\to_0}\Phi_\varepsilon(v_\varepsilon)\geq\Phi(v)$,
    if $v_\varepsilon$ is a sequence converging to $v$;
  \item $\Phi_\varepsilon$ is Fr\'echet-differentiable, the
    differential $\partial\Phi_\varepsilon$ is
    $\varepsilon^{-1}$-Lipschitz-continu\-ous, and
    \begin{equation*}
      \norm{\partial\Phi_\varepsilon(x)}\leq\norm{y}
      \quadd\forall x\in X, y\in\partial\Phi(x).
    \end{equation*}
  \end{enumerate}
\end{prop}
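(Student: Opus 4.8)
The plan is to base everything on the minimizer of the functional in~\eqref{eq:moreau-yosida-def}, whose existence is already granted and whose uniqueness follows from strict convexity of $w\mapsto \tfrac{1}{2\varepsilon}\norm{v-w}^2+\Phi(w)$. Denote this minimizer by $J_\varepsilon v$, so that $\Phi_\varepsilon(v)=\tfrac{1}{2\varepsilon}\norm{v-J_\varepsilon v}^2+\Phi(J_\varepsilon v)$. Writing the first-order optimality condition for this strictly convex problem yields the resolvent identity $\tfrac{1}{\varepsilon}(v-J_\varepsilon v)\in\partial\Phi(J_\varepsilon v)$. Setting $A_\varepsilon v:=\tfrac{1}{\varepsilon}(v-J_\varepsilon v)$ and using the monotonicity of the subdifferential $\partial\Phi$, I would first record three elementary consequences: $J_\varepsilon$ is nonexpansive; $A_\varepsilon$ is monotone and $\varepsilon^{-1}$-Lipschitz-continuous (test the monotonicity inequality $(A_\varepsilon v_1-A_\varepsilon v_2,J_\varepsilon v_1-J_\varepsilon v_2)\geq0$ with $J_\varepsilon v_i=v_i-\varepsilon A_\varepsilon v_i$); and $\norm{A_\varepsilon x}\leq\norm{y}$ for every $y\in\partial\Phi(x)$ (from $(A_\varepsilon x-y,J_\varepsilon x-x)\ge0$ together with $J_\varepsilon x-x=-\varepsilon A_\varepsilon x$). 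This already delivers the two quantitative assertions of item~(5).

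The crux is to prove item~(5), namely that $\Phi_\varepsilon$ is Fr\'echet-differentiable with $\partial\Phi_\varepsilon(v)=A_\varepsilon v$. I would obtain a two-sided estimate for $\Phi_\varepsilon(v+h)-\Phi_\varepsilon(v)$ by inserting suboptimal competitors in~\eqref{eq:moreau-yosida-def}. Choosing $w=J_\varepsilon v$ in the definition of $\Phi_\varepsilon(v+h)$ and expanding the square gives the upper bound $\Phi_\varepsilon(v+h)-\Phi_\varepsilon(v)\leq (A_\varepsilon v,h)+\tfrac{1}{2\varepsilon}\norm{h}^2$; choosing $w=J_\varepsilon(v+h)$ in the definition of $\Phi_\varepsilon(v)$, expanding, and then using the $\varepsilon^{-1}$-Lipschitz bound on $A_\varepsilon$ to replace $A_\varepsilon(v+h)$ by $A_\varepsilon v$ gives a matching lower bound $\Phi_\varepsilon(v+h)-\Phi_\varepsilon(v)\geq (A_\varepsilon v,h)-\tfrac{3}{2\varepsilon}\norm{h}^2$. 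Together these show $|\Phi_\varepsilon(v+h)-\Phi_\varepsilon(v)-(A_\varepsilon v,h)|=O(\norm{h}^2)=o(\norm{h})$, which is exactly differentiability with gradient $A_\varepsilon v$; the Lipschitz continuity of $\partial\Phi_\varepsilon$ is the already-established Lipschitz bound on $A_\varepsilon$. I expect this computation, though elementary, to be the main (and only) delicate point; everything else reduces to it or to convex-analysis boilerplate.

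The remaining items then follow quickly. For item~(1), convexity of $\Phi_\varepsilon$ holds because it is the infimal convolution of the convex functions $\Phi$ and $\tfrac{1}{2\varepsilon}\norm{\cdot}^2$ (equivalently, because its gradient $A_\varepsilon$ is monotone), and continuity is immediate from the $C^1$ regularity just proved. For item~(2), taking $w=v$ in~\eqref{eq:moreau-yosida-def} gives $\Phi_\varepsilon(v)\leq\Phi(v)$, while taking any $w_0\in\operatorname{dom}\Phi$ (nonempty since $\Phi$ is proper) gives $\Phi_\varepsilon(v)\leq\tfrac{1}{2\varepsilon}\norm{v-w_0}^2+\Phi(w_0)<+\infty$. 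For item~(3), monotonicity in $\varepsilon$ is clear since the minimand increases as $\varepsilon\downarrow0$; for the convergence, the chain $\Phi(J_\varepsilon v)\leq\Phi_\varepsilon(v)\leq\Phi(v)$ forces $\tfrac{1}{2\varepsilon}\norm{v-J_\varepsilon v}^2\leq\Phi(v)$, whence $J_\varepsilon v\to v$, and lower semicontinuity of $\Phi$ squeezes $\Phi_\varepsilon(v)$ between $\Phi(J_\varepsilon v)$ and $\Phi(v)$, both tending to $\Phi(v)$ (the case $\Phi(v)=+\infty$ being handled by the same monotone-limit argument).

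Finally, item~(4) is the Mosco-type lower bound: for $v_\varepsilon\to v$ I set $w_\varepsilon:=J_\varepsilon v_\varepsilon$, note $\Phi_\varepsilon(v_\varepsilon)\geq\Phi(w_\varepsilon)$, observe that along a subsequence realizing a finite $\liminf$ the term $\tfrac{1}{2\varepsilon}\norm{v_\varepsilon-w_\varepsilon}^2$ stays bounded so that $w_\varepsilon\to v$, and conclude $\Phi(v)\leq\liminf_{\varepsilon\to0}\Phi(w_\varepsilon)\leq\liminf_{\varepsilon\to0}\Phi_\varepsilon(v_\varepsilon)$ by lower semicontinuity of $\Phi$.
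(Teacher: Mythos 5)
The paper does not prove Proposition~\ref{prop:yosida} at all: it is recalled as standard material with pointers to \cite{brezis73}, \cite{barbu10} and \cite{dalmaso93}. Your argument is correct and is essentially the canonical resolvent-based proof from those references --- introducing $J_\varepsilon$ and $A_\varepsilon=\varepsilon^{-1}(I-J_\varepsilon)$ via the optimality condition, deriving nonexpansiveness, monotonicity, the $\varepsilon^{-1}$-Lipschitz bound and $\norm{A_\varepsilon x}\leq\norm{y}$ from monotonicity of $\partial\Phi$, and then getting Fr\'echet differentiability from the two-sided competitor estimate --- so there is nothing to compare against and nothing to object to.
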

The differential $\partial\Phi_\varepsilon$ coincides with the Yosida
approximation of the maximal monotone operator $\partial\Phi$.

At this point, we introduce $\widehat\beta_\varepsilon:\real\to\real$
and
$\beta_\varepsilon=\partial\widehat\beta_\varepsilon:\real\to\real$,
the Moreau--Yosida regularization of $\widehat\beta$ and Yosida
approximation of $\beta$, respectively.
It follows immediately from the previous proposition and the fact that
$\widehat\beta(0)=0$ and $\beta(0)\ni0$, that
\begin{alignat}{1}
  &
  \beta_\varepsilon(0)=0,\quadd\widehat\beta_\varepsilon(0)=0,
  \\&
  \label{eq:beta-lipschitz}
  |\beta_\varepsilon(r)-\beta_\varepsilon(s)|
  \leq\frac{1}{\varepsilon}|r-s|,\quadd
  0\leq\widehat\beta_\varepsilon(r)\leq\frac{1}{2\varepsilon}r^2,
  \\&
  \label{eq:standard-yosida-for-beta}
  |\beta_\varepsilon(r)|\leq |\beta^\circ(r)|,
  \quadd
  \widehat\beta_\varepsilon(r)\leq\widehat\beta(r),
\end{alignat}
for all $t,s\in\real$, where $\beta^\circ(r)$ denotes the element of
$\beta(r)$ having minimum modulus.

In the same way, we introduce the Moreau--Yosida regularization
$\norm{\,\cdot\,}_{H,\varepsilon}:H\to\real$ and the Yosida approximation
$\Sign_\varepsilon:H\to H$.
It holds that
\begin{equation}\label{eq:quasi-norma-def}
  \norm{v}_{H,\varepsilon}:=
  \min_{w\in H}\{\tfrac{1}{2\varepsilon}\norm{v-w}^2_H+\norm{w}_H\}=
\begin{cases}
  \norm{v}_H -\cfrac{\varepsilon}{2}& \text{if } \norm{v}_H\geq \varepsilon,\\
  \cfrac{\norm{v}_H^2}{2\varepsilon}&\text{if }\norm{v}_H\leq \varepsilon.
\end{cases}
\end{equation}
Indeed, if we differentiate the convex function
$w\mapsto\tfrac{1}{2\varepsilon}\norm{v-w}^2_H+\norm{w}_H$, we obtain
\begin{equation*}
  w+\varepsilon\Sign w\ni v,
\end{equation*}
yielding
\begin{equation*}
  w=
  \begin{cases}
  (1-\tfrac{\varepsilon}{\norm{v}_H})v& \text{if } \norm{v}_H\geq \varepsilon,\\
  0&\text{if }\norm{v}_H\leq \varepsilon,
  \end{cases}
\end{equation*}
thus we can substitute $w$ in the minimum of equation~\eqref{eq:quasi-norma-def}.
We calculate the Yosida approximation of $\Sign$, by
differentiating~\eqref{eq:quasi-norma-def}, obtaining
\begin{equation}
  \label{eq:sign-def}
  \begin{aligned}
    &\Sign_\varepsilon(v)
    =\cfrac{v}{\max\{\varepsilon,\norm{v}_H\}}
    =
    \begin{cases}
      \cfrac{v}{\norm{v}_H}&\text{if }\norm{v}_H\geq \varepsilon,\\
      \cfrac{v}{\varepsilon}&\text{if }\norm{v}_H\leq \varepsilon,
    \end{cases}
  \end{aligned}
\end{equation}
which imply
\begin{equation}\label{eq:quasi-norma}
  (\Sign_\varepsilon(v),v)\geq\norm{v}_{H,\varepsilon}.
\end{equation}
Finally, we point out that the Moreau--Yosida regularization converges
uniformly in $H$, i.e.
\begin{equation}
  \label{eq:quasi-norm-uniform}
  \sup \{\norm{v}_H-\norm{v}_{H,\varepsilon}:{v\in H}\}
  \leq \frac \varepsilon 2.
\end{equation}

\subsection{Existence of solutions for \problem{A}}
\paragraph{Faedo--Galerkin approximation.}
In order to use the Faedo--Galerkin method, we need to introduce a few
notations.  We take $\{v_i\}_{i=1}^{+\infty}$ a complete orthogonal
set of $V$ given by the eigenfunctions of the Laplace operator coupled
with Neumann conditions, i.e.
\begin{equation*}
-\Laplace v_i=\lambda_i v_i
\text{ on }\Omega,
\quadd\partial_n v_i=0
\text{ on }\Gamma,
  \nonumber
\end{equation*}
where $\lambda_i\leq\lambda_{i+1}$, $i\in\natr$, are the eigenvalues
of the Laplace operator.
We define $V_n:=\Span\{v_1, \dots,v_n\}$ and let $P_n:V\to V$ be the
orthogonal projector on $V_n$.
We know that $\cup_{i=1}^{+\infty}V_n$ is dense in $V$.
It is still true that $\{v_i\}_{i=1}^{+\infty}$ is a complete
orthogonal set for $H$ and $W$.
Moreover, the operator $P_n$ can be extended or restricted to $H$ and
$W$ respectively and the extension and the restriction are still
orthogonal projectors in the spaces $H$ and $W$.
We recall that if $v\in X$ then
\begin{equation}\label{eq:proiezioni}
  P_n(v)\to v \text{ strongly in }X
  \quadd\text{ and }\quadd
  \norm{P_n(v)}_X\leq\norm{v}_X,
\end{equation}
where $X$ can be either $H$, $V$ or $W$.
Using standard density results, we take $f_n\in C^0([0,T];H)$,
such that $f_n$ converges strongly to $f$ in $L^2(0,T;H)$.
We now project the initial
data as well as the target function~$\eta^*$\pier{:}
\begin{gather}
\vartheta_{0,n}:=P_n \vartheta_0,\quadd
w_{0,n}:=P_n w_0,\quadd
\varphi_{0,n}:=P_n \varphi_0,
\nonumber
\\[2mm]
\eta^*_{n}:=P_n \eta^*.
\label{eq:proj-target-function}
\end{gather}


The new problem is now to find two functions $w_n\in C^2([0,T];V_n)$
and $\varphi_n\in C^1([0,T];V_n)$, such that\marginnote{formulazione
  variazionale}
\begin{align}
  \label{eq:heat-faedo-galerkin}
  &
  \begin{aligned}
  &(\partial_t^2w_n+l\partial_t\varphi_n
  -\kappa\Laplace \partial_t w_n-\tau\Laplace w_n
  \\
  &+\rho\Sign_\varepsilon(\partial_t w_n+\alpha\varphi_n-\eta_n^*),v)
  =(f_n,v),
  \quadd\forall v\in V_n,\text{ in }[0,T],
  \end{aligned}
  \\[2mm]
  \label{eq:phase-faedo-galerkin}
  &(\partial_t\varphi_n
  -\Laplace \varphi_n
  +\beta_\varepsilon(\varphi_n)
  +\pi(\varphi_n),v)
  =\gamma(\partial_t w_n,v),
  \quadd\forall v\in V_n,\text{ in }[0,T],
  \\[2mm]
  &\partial_t w_n(0)=\vartheta_{0,n}
,\quadd
w_n(0)=w_{n,0}
,\quadd
\varphi_n(0)=\varphi_{n,0}.  
\end{align}
This is a non-linear system of ordinary differential equations of the
second and first order in the variables $w_n$ and $\varphi_n$
respectively.  The non-linearity is only given by $\Sign_\varepsilon$,
$\beta_\varepsilon$, and $\pi$, which are all Lipschitz-continuous
functions.  Hence, by Cauchy--Lipschitz theorem, there exists a unique
solution $(w_n,\varphi_n)$ defined on $[0,T]$.

\paragraph{First a priori estimate.}
We test equation~\eqref{eq:heat-faedo-galerkin} and
equation~\eqref{eq:phase-faedo-galerkin} by taking
$v=\partial_tw_n+\alpha\varphi_n-\eta^*_n$ and
$v=\partial_t\varphi_n$.  We sum with $\frac{1}{2}\frac{\de}{\de
  t}\norm{\varphi_n}_H^2-(\varphi_n,\partial_t\varphi_n)=0$ obtaining
\begin{equation*}
\begin{aligned}
  &\cfrac{1}{2}
  \cfrac{\de}{\de t}
  \norm{\partial_t w_n}_H^2
  +(\partial_t^2w_n,\alpha\varphi_n-\eta^*_n)
  +l(\partial_t\varphi_n,\partial_tw_n+\alpha\varphi_n-\eta^*_n)
  \\
  &+\kappa\norm{\partial_t\nabla w_n}_H^2
  +\kappa(\partial_t \nabla w_n,\nabla(\alpha\varphi_n-\eta^*_n))
  \\
  &+\frac{\tau}{2}
  \cfrac{\de}{\de t}
  \norm{\nabla w_n}_H^2
  +\tau(\nabla w_n,\nabla(\alpha\varphi_n-\eta^*_n))
  \\
  &+\rho(\Sign_\varepsilon(\partial_tw_n+\alpha\varphi_n-\eta^*_n),
  \partial_tw_n+\alpha\varphi_n-\eta^*_n)
  \\&
  +\norm{\partial_t\varphi_n}_H^2
  +\cfrac{1}{2}
  \cfrac{\de}{\de t}
  (\norm{\varphi_n}_H^2+\norm{\nabla\varphi_n}_H^2)
  \\&
  +\cfrac{\de}{\de t}
  \int_\Omega\widehat\beta_\varepsilon(\varphi_n)
  +((\pi(\varphi_n)-\varphi_n),\partial_t\varphi_n)
  \\
  &=(f_n(t),\partial_tw_n+\alpha\varphi_n-\eta^*_n)
  +\gamma(\partial w_n,\partial_t\varphi_n).
\end{aligned}
\end{equation*}
We integrate between $0$ and $t$, and, recalling that
$(\Sign_\varepsilon(v),v)\geq\norm{v}_{H,\varepsilon}$, we get
\begin{equation}\label{eq:big-estimate}
  \begin{aligned}
    &
    \cfrac{1}{2}
    \norm{\partial_t w_n(t)}_H^2
    +\kappa\int_0^t \norm{\partial_t\nabla w_n}_H^2
    +\cfrac{\tau}{2}
    \norm{\nabla w_n(t)}_H^2
    \\&
    +\rho\int_0^t\norm{\partial_tw_n+\alpha\varphi_n+\eta^*_n}_{H,\varepsilon}
    +\int_0^t\norm{\partial_t\varphi_n}_H^2
    \\&
    +\cfrac{1}{2}
    \norm{\varphi_n(t)}_V^2
    +\int_\Omega\widehat\beta_\varepsilon(\varphi_n(t))
    \\&
    \leq
    \frac{1}{2}
    \norm{\vartheta_{0,n}}_H^2
    +\cfrac{\tau}{2}
    \norm{\nabla w_{0,n}}_H^2
    +\cfrac{1}{2}
    \norm{\varphi_{0,n}}^2_V
    +\norm{\widehat\beta_\varepsilon(\varphi_{0,n})}_{L^1(\Omega)}
    \\&
    +\int_0^t (\partial_tw_n,\alpha\partial_t\varphi_n)
    +(\vartheta_{0,n},\alpha\varphi_{0,n} -\eta^*_n)
    -(\partial_t w_n(t),\alpha\varphi(t)-\eta^*_n)
    \\&
    -l\int_0^t(\partial_t\varphi_n,\partial_tw_n+\alpha\varphi_n-\eta^*_n)
    -\kappa\int_0^t (\partial_t\nabla w_n,\nabla(\alpha\varphi_n-\eta^*_n))
    \\&
    -\tau\int_0^t (\nabla w_n,\nabla(\alpha\varphi_n-\eta^*_n))
    -\int_0^t(\pi(\varphi_n)-\varphi_n,\partial_t\varphi_n)
    \\&
    +\int_0^t(f_n,\partial_tw_n+\alpha\varphi_n-\eta^*_n)
    +\int_0^t(\partial_t w_n,\partial_t \varphi_n).
  \end{aligned}
\end{equation}
We need now to control the summands of the left side
of~\eqref{eq:big-estimate}.  By~\eqref{eq:proiezioni} we have that
\begin{equation*}
  \norm{\vartheta_{0,n}}_H^2\leq\norm{\vartheta_0}_H^2\leq C,
  \nonumber
\end{equation*}
and \pier{similarly} we can control $\norm{\nabla w_{0,n}}_H$ and
$\norm{\varphi_{0,n}}_V$.  For the last initial datum we note
\begin{equation}\label{eq:annoying}
  \int_\Omega\widehat\beta_\varepsilon(\varphi_n)
  \leq
  \int_\Omega\cfrac{1}{2\varepsilon}|\varphi_{0,n}|^2
  \leq
  \frac{1}{2\varepsilon}\norm{\varphi_0}_H^2.
\end{equation}
Using the Young \pier{inequality}~\eqref{eq:modified-young} we find
\begin{equation*}
  \int_0^t (\partial_tw_n,\alpha\partial_t\varphi_n)
  \leq
  \cfrac{\alpha^2}{2}\int_0^t\norm{\partial_tw_n}_H^2
  +\cfrac{1}{2}\int_0^t\norm{\partial_t\varphi_n}_H^2.
  \nonumber
\end{equation*}
The next step is easier \pier{as}
\begin{equation*}
  \begin{aligned}
  (\vartheta_{0,n},\alpha\varphi_{0,n} -\eta^*_n)
  &\leq
  \norm{\vartheta_{0}}_H^2
  +\cfrac{\alpha^2}{2}\norm{\varphi_{0}}_H^2
  +\cfrac{1}{2}\norm{\eta^*}_H^2
  \leq C.
  \end{aligned}
  \nonumber
\end{equation*}
Again, owing to \pier{the} Young inequality, we infer
\begin{equation*}
\begin{aligned}
  -(\partial_tw_n(t),\alpha\varphi_n(t)-\eta^*_n)&\leq
  \cfrac{1}{4}
  \norm{\partial_tw_n(t)}_H^2+
  \norm{\alpha\varphi_n(t)-\eta^*_n}_H^2\\
  &\leq
  \cfrac{1}{4}
  \norm{\partial_tw_n(t)}_H^2+
  2\alpha^2
  \norm{\varphi_n(t)}_H^2+C.
\end{aligned}
  \nonumber
\end{equation*}
Since
\begin{equation*}
  \begin{aligned}
    \norm{\varphi_n(t)}_H^2
    &=
    \norm{\varphi_{0,n}}_H^2
    +2 \int_0^t (\varphi_n,\partial_t\varphi_n)
    \\
    &\leq
    \norm{\varphi_0}_H^2
    +8\alpha^2 \int_0^t \norm{\varphi_n}_H^2
    +\cfrac{1}{8\alpha^2}\int_0^t\norm{\partial_t\varphi_n}_H^2,
  \end{aligned}
  \nonumber
\end{equation*}
we find
\begin{equation*}
  -(\partial_tw_n(t),\alpha\varphi_n(t)-\eta^*_n)
  \leq
  \cfrac{1}{4}
  \norm{\partial_tw_n(t)}_H^2
  +\cfrac{1}{4}\int_0^t\norm{\partial_t\varphi}_H^2
  +C\left(1+\int_0^t \norm{\varphi_n}_H^2\right).
  \nonumber
\end{equation*}

Using the same technique we deduce
\begin{gather*}
  \begin{split}
  &-l\int_0^t(\partial_t\varphi_n,\partial_tw_n+\alpha\varphi_n-\eta^*_n)\\
  &\leq
  \cfrac{1}{8}
  \int_0^t
  \norm{\partial_t\varphi_n}_H^2
  +C\left(1+
  \int_0^t
  \norm{\partial_tw_n}_H^2
  +
  \int_0^t
  \norm{\varphi_n}_H^2\right),
  \end{split}\\
  -\kappa\int_0^t (\partial_t\nabla
  w_n,\nabla(\alpha\varphi_n-\eta^*_n)
  \leq
  \cfrac{\kappa}{2}
  \int_0^t\norm{\partial_t\nabla w_n}_H^2
  +\alpha^2\int_0^t\norm{\varphi_n}_H^2
  +C,\\
  -\tau
  \int_0^t (\nabla w_n,\nabla(\alpha\varphi_n+\eta^*_n)
  \leq
  \frac{\tau}{2}\int_0^t\norm{\nabla w_n}_H^2
  +\tau\alpha^2\int_0^t\norm{\nabla\varphi_n}_H^2+C.
\end{gather*}
Then, recalling that $\pi$ is Lipschitz-continuous, we have
\begin{gather*}
  \begin{split}
  -\int_0^t(\pi(\varphi_n)-\varphi_n,\partial_t\varphi_n)
  &\leq
  4\int_0^t\norm{\pi(\varphi_n)-\varphi_n}_H^2
  +\frac{1}{16}
  \int_0^t\norm{\partial_t\varphi_n}_H^2\\
  &\leq C\left(1+\int_0^t\norm{\varphi_n}_H^2\right)
  +\frac{1}{16}
  \int_0^t\norm{\partial_t\varphi_n}_H^2,
  \end{split}\\
  \int_0^t(f_n,\partial_tw_n+\alpha\varphi_n-\eta^*_n)
  \leq C\left(1+
  \int_0^t\norm{\partial_tw_n}_H^2
  +\int_0^t\norm{\varphi_n}_H^2\right),\\
  \int_0^t(\partial_t w_n,\partial_t \varphi_n)
  \leq
  8\int_0^t\norm{\partial_t w_n}_H^2
  +\frac{1}{32}\int_0^t\norm{\partial_t \varphi_n}_H^2.
\end{gather*}
We put everything together, obtaining
\begin{align*}
  &
  \cfrac{1}{32}
  \norm{\partial_t w_n(t)}_H^2
  +\cfrac{\kappa}{2}\int_0^t \norm{\partial_t\nabla w_n}_H^2
  +\cfrac{\tau}{2}
  \norm{\nabla w_n(t)}_H^2
  \\&
  +\rho\int_0^t\norm{\partial_tw_n+\alpha\varphi_n+\eta^*_n}_{H,\varepsilon}
  +\frac{1}{4}\int_0^t\norm{\partial_t\varphi_n}_H^2
  +\cfrac{1}{2}
  \norm{\varphi_n(t)}_V^2
  \\&
  +\int_\Omega\widehat\beta_\varepsilon(\varphi_n(t))
  \leq
  C\left(1+\varepsilon^{-1}
  +\int_0^t\norm{\partial_tw_n}_H^2
  +\int_0^t\norm{\varphi_n}_V^2
  +\int_0^t\norm{\nabla w_n}_H^2.
  \right)
\end{align*}
We use now the Gronwall lemma deducing
\marginnote{dopo i boxed gli levo}
\begin{equation}\label{eq:first-a-priori-estimate}
\myboxed{
  \begin{aligned}
    &
    \norm{w_n}_{
    W^{1,\infty}(0,T;H)\cap
    H^1(0,T;V)
  }
  +\norm{\varphi_n}_{
    H^1(0,T;H)\cap
    L^\infty(0,T;V)}
  \\&
  +\rho\int_0^T\norm{\partial_tw_n+\alpha\varphi_n+\eta^*_n}_{H,\varepsilon}
  +\norm{\widehat\beta_\varepsilon(\varphi_n)}_{
    L^\infty(0,T;L^1(\Omega))}
  \leq C(1+\varepsilon^{-1}).
  \end{aligned}
}
\end{equation}
\begin{rmrk}
\label{rmrk:epsilon-dependence}
Unfortunately, we are not able to provide an estimate independent of
$\varepsilon$, preventing the possibility of taking the limit as
$\varepsilon\to 0$.  For the moment, we do not worry about this
trouble since our first aim is to take the limit as $n\to\infty$.

We stress that the only dependence on $\varepsilon$
in~\eqref{eq:first-a-priori-estimate} arises in~\eqref{eq:annoying}.
We anticipate that the other estimates have a dependence on
$\varepsilon$ because we will use this estimate to prove them.  Hence,
when we will be able to fine-tune equation~\eqref{eq:annoying}
removing the dependence on $\varepsilon$, all estimates will work
perfectly, being independent of $\varepsilon$.

Finally, we point out that the term $C(1+\varepsilon^{-1})$ could be
slightly refined in $C(1+\varepsilon^{-1/2})$.  We will be sloppy in
carrying out the dependence on $\varepsilon$ because, as we have just
said, we will remove this dependence, and, at this stage, we only want
estimates independent of $n$.
\end{rmrk}

\paragraph{Second a priori estimate.}
We define $g_1:[0,T]\to H$ as
$g_1(t)=\gamma\partial_tw_n(t)-\partial_t\varphi_n(t)-\pi(\varphi_n(t))$.
Due to the first a priori estimate and the Lipschitz-continuity of
$\pi$\pier{,} we have that
\begin{equation*}
\norm{g_1}_{L^2(0,T;H)}\leq
C(1+\varepsilon^{-1}).  
\end{equation*}
We rewrite~\eqref{eq:phase-faedo-galerkin} as
\begin{equation*}
-(\Laplace\varphi_n(t),v)+(\beta_\varepsilon(\varphi_n(t)),v)=(g_1(t),v),
\end{equation*}
and we test with $v=-\Laplace\varphi_n(t)$\pier{:}
\begin{equation*}
  \norm{\Laplace\varphi_n(t)}_H^2-
  (\beta_\varepsilon(\varphi_n(t)),\Laplace\varphi_n(t))
  =-(g_1,\Laplace\varphi_n)
  \leq
  \norm{g_1(t)}_H\norm{\Laplace\varphi_n(t)}_H.
\end{equation*}
The second term is positive, because of
\begin{equation*}
  -(\beta_\varepsilon(\varphi_n),\Laplace\varphi_n)
  =-\int_\Omega\beta_\varepsilon(\varphi_n)\Laplace\varphi_n
  =\int_\Omega\beta_\varepsilon'(\varphi_n)|\nabla\varphi_n|^2\geq 0,
\end{equation*}
then yielding $\norm{\Laplace\varphi_n(t)}_H\leq\norm{g_1(t)}_H$.
Since $P_n(\beta_\varepsilon(\varphi_n))=P_n(g_1)+\Laplace\varphi_n$,
owing to elliptic regularity, we conclude that
\begin{equation}\label{eq:second-a-priori-estimate}
  \myboxed{
\norm{\varphi_n}_{
  L^2(0,T;W)}
+\norm{P_n(\beta_\varepsilon(\varphi_n))}_{
  L^2(0,T;H)}
\leq C(1+\varepsilon^{-1}).
}
\end{equation}

\paragraph{Third a priori estimate.}
We define $\eta,g_2:[0,T]\to V_n$ as
\begin{gather*}
  \eta(t):=\partial_t w_n(t)+\alpha\varphi_n(t)-\eta^*_n,\\[4mm]
  \begin{split}
     g_2(t)&:=(\alpha-l)\partial_t\varphi_n(t)-\alpha\kappa\Laplace\varphi_n(t)
     +\kappa\Laplace\eta^*_n\\
     &\,\,+\tau\Laplace\left(w_{0,n}+
     \alpha\int_0^t\varphi_n(s)\de s+t\eta^*_n\right)
     +f_n(t)
   \end{split}
\end{gather*}
\pier{for $t\in [0,T].$}
Thanks to equations~\eqref{eq:first-a-priori-estimate}
and~\eqref{eq:second-a-priori-estimate} we have that
\begin{equation*}
\norm{g_2}_{L^2(0,T;H)}+\norm{\eta}_{L^2(0,T;H)}\leq
C(1+\varepsilon^{-1}).
\end{equation*}
Moreover, equation~\eqref{eq:initial-data-hypothesis} implies
\begin{equation*}
  \norm{\eta(0)}_V=\norm{\vartheta_{0,n}+\alpha\varphi_{0,n}+\eta^*_n}_V
  \leq C.
\end{equation*}
We rewrite
equation~\eqref{eq:heat-faedo-galerkin} as
\begin{equation*}\label{eq:heat-rewritten-g2}
  \left(\partial_t\eta-\kappa\Laplace\eta-
  \tau\int_0^t\Laplace\eta(s)\de s
  +\rho\Sign_\varepsilon(\eta),v\right)
  =(g_2,v).
\end{equation*}
In view of equation~\eqref{eq:quasi-norma-def}, it is clear that
\begin{align*}
\left(\Laplace\eta(t),\int_0^t\Laplace\eta(s)\de s\right)
&=
\frac{1}{2}\cfrac{\de}{\de t}
\left\norm{\int_0^t\Laplace\eta(s)\de s\right}_H^2
,\\
-\int_\Omega\Sign_\varepsilon(\eta)\Laplace\eta
&=
\int_\Omega\nabla\Sign_\varepsilon(\eta)\cdot\nabla\eta
\geq 0
,\\
-\int_\Omega\partial_t\eta\Laplace\eta
&=
\cfrac{1}{2}
\cfrac{\de}{\de t}
\int_\Omega|\nabla\eta|^2.
\end{align*}
Thus, we test~\eqref{eq:heat-rewritten-g2} with $v=-\Laplace\eta(t)$
and we integrate over time finding
\begin{equation*}
\begin{aligned}
  \cfrac{1}{2}\norm{\nabla\eta(t)}_H^2
  &+\kappa\int_0^t\norm{\Laplace \eta}_H^2
  +\cfrac{\tau}{2}
  \left\norm{\int_0^t\Laplace\eta\de s\right}_H^2\\
  &\leq
  \cfrac{1}{2}
  \norm{\nabla\eta(0)}_H^2
  -\int_0^t (g_2,\Laplace\eta)\\
  &\leq
  C+
  \cfrac{\kappa}{2}
  \int_0^t\norm{\Laplace\eta}_H^2
  +\cfrac{1}{2\kappa}
  \int_0^t\norm{g_2}_H^2.
\end{aligned}
\end{equation*}
Hence, we infer
\begin{equation*}
\norm{\nabla\eta}_{L^\infty(0,T;H)}+\norm{\Laplace\eta}_{L^2(0,T;H)}\leq
C(1+\varepsilon^{-1}),
\end{equation*}
\marginnote{attenzione, ho aggiunto la parte sul
  gradiente}which, together with elliptic regularity, implies
$\norm{\eta}_{L^2(0,T;W)}\leq C(1+\varepsilon^{-1})$\pier{. Thus, it turns out that}
\begin{equation*}
\norm{\partial_tw_n}_{L^\infty(0,T;V)} +
\norm{\partial_tw_n}_{L^2(0,T;W)}\leq C(1+\varepsilon^{-1}).
\end{equation*}
Finally, as $\norm{w_n}_{L^\infty(0,T;H)}\leq C(1+\varepsilon^{-1})$
and $w_{0,n}\in W$, we conclude that \marginnote{ora serve $w_0\in W$
  e non solo in $H^2$}
\begin{equation}
  \myboxed{
\norm{w_n}_{W^{1,\infty}(0,T;V)\cap H^1(0,T;W)}
\leq C(1+\varepsilon^{-1}).
}
\end{equation}

\paragraph{Fourth a priori estimate.}
We define $g_3:[0,T]\to V_n$ as
\begin{equation*}
  g_3(t)=g_2(t)+\kappa\Laplace\eta(t)+\tau\int_0^t\Laplace\eta(s)\de s
  \pier{, \quadd t\in [0,T].}
\end{equation*}
Again, it holds true that $\norm{g_3}_{L^2(0,T;H)}\leq
C(1+\varepsilon^{-1})$ by comparison.  We rewrite
equation~\eqref{eq:heat-rewritten-g2} as
\begin{equation}\label{eq:heat-rewritten-g3}
(\eta_t+\rho\Sign_\varepsilon(\eta),v)=(g_3,v).
\end{equation}
Since $\frac{\de}{\de
  t}\norm{\eta}_{H,\varepsilon}=(\Sign_\varepsilon(\eta),\partial_t\eta)_H$,
we can test~\eqref{eq:heat-rewritten-g3} with $v=\partial_t\eta$
obtaining
\begin{equation*}
  \begin{aligned}
    \int_0^t\norm{\partial_t\eta}_H^2
    +\rho\norm{\eta(t)}_H
    &=\rho\norm{\eta(0)}_H
    +\int_0^t(g_3(s),\partial_t\eta(s))\de s\\
    &\leq \rho C+\cfrac{1}{2}\int_0^t\norm{\partial_t\eta}_H^2
    +\cfrac{1}{2}\int_0^t\norm{g_3}_H^2.
  \end{aligned}
\end{equation*}
Thus, $\norm{\partial_t\eta}_{L^2(0,T;H)}^2\leq
C(1+\rho+\varepsilon^{-1})$ and then, by comparison, we \pier{have that}
\begin{equation}
  \myboxed{
\norm{w_n}_{
  H^2(0,T;H)}
\leq C(1+\rho^{1/2}+\varepsilon^{-1}).
}
\end{equation}

\paragraph{Passage to the limit in the Faedo--Galerkin scheme.}
Making use of standard weak or weak* compactness results, possibly
taking a subsequence, we have that $(w_n,\varphi_n)$ converges in the
following topologies
\begin{align}
  \label{eq:heat-convergence-linfty}
  w_n\to w_\varepsilon
  \quadd
  &\text{ weakly in }
  H^2(0,T;H)\cap H^1(0,T;W),\\
  \label{eq:phase-convergence-l2}
  \varphi_n\to\varphi_\varepsilon
  \quadd
  &\text{ weakly in }
  H^1(0,T;H)\cap L^2(0,T;W),\\
  w_n\to w_\varepsilon
  \quadd
  &\text{ weakly* in }
  W^{1,\infty}(0,T;V),\\
  \label{eq:phase-convergence-linfty}
  \varphi_n\to\varphi_\varepsilon
  \quadd
  &\text{ weakly* in }
  L^\infty(0,T;V),
\end{align}
for a suitable pair $(w_\varepsilon,\varphi_\varepsilon)$.  This
implies, together with the generalized Ascoli theorem and the
Aubin--Lions theorem~\cite[Sec. 8, Cor. 4]{simon87},\marginnote{pagina?}
the following strong convergences
\begin{align}
  \label{eq:w-convergence-c1}
  w_n\to w_\varepsilon
  \quadd
  &\text{ in }
  H^1(0,T;V)\cap C^1([0,T];H),\\
  \label{eq:phase-convergence-c0}
  \varphi_n\to\varphi_\varepsilon
  \quadd
  &\text{ in }
  C^0([0,T];H)\cap L^2(0,T;V).
\end{align}
Hence, we have the \pier{convergences} 
\begin{align*}
  \Sign_\varepsilon(\partial_t w_n
  +\alpha\varphi_n-\eta_n^*)
  &\to
  \Sign_\varepsilon(\partial_t w_\varepsilon
  +\alpha\varphi_\varepsilon-\eta^*),
  \\
  \pi(\varphi_n)
  &\to
  \pi(\varphi_\varepsilon),
  \\
  \beta_\varepsilon(\varphi_n)
  &\to
  \beta_\varepsilon(\varphi_\varepsilon)
\end{align*}
\pier{in $C^0([0,T];H)$.}
We note that $\eta^*_n\to\eta^*$ and that the initial conditions hold true
\begin{equation*}
  \partial_tw_\varepsilon(0)=\vartheta_0
  ,\quadd
  w_\varepsilon(0)=w_0
  ,\quadd
  \varphi_\varepsilon(0)=\varphi_0.
\end{equation*}
Indeed, the property~\eqref{eq:proiezioni} implies the strong
convergence of the initial data and the target function $\eta^*$.  We
take $n,h\in\natr$ with $n>h$ and $v\in V_h\subset V_n$.  Since all
the involved terms converge, we take the limit as $n\to +\infty$ in
equations~\eqref{eq:heat-faedo-galerkin}
and~\eqref{eq:phase-faedo-galerkin}, obtaining
\begin{align*}
  &
  \begin{aligned}
    &(\partial_t^2w_\varepsilon+l\partial_t\varphi_\varepsilon
    -\kappa\Laplace \partial_t w_\varepsilon-\tau\Laplace w_\varepsilon
    \\
    &+\rho\Sign_\varepsilon(\partial_tw_\varepsilon+
    \alpha\varphi_\varepsilon-\eta^*),v)
    =(f,v)\quadd\forall v\in V_h,\text{ a.e. in }(0,T),
  \end{aligned}
  \\[2mm]
  &(\partial_t\varphi_\varepsilon
  -\Laplace \varphi_\varepsilon
  +\beta_\varepsilon(\varphi_\varepsilon)
  +\pi(\varphi_\varepsilon),v)
  =\gamma(\partial_t w_\varepsilon,v)
  \quadd\forall v\in V_h,\text{ a.e. in }(0,T).
\end{align*}
As $h$ is arbitrary, the above equations hold for all
$v\in\cup_{h=1}^{+\infty}V_h$. By density of $\cup_{h=1}^{+\infty}V_h$
in $H$, we find
\begin{align}\label{eq:heat-yosida}
  &
  \begin{aligned}
    &(\partial_t^2w_\varepsilon+l\partial_t\varphi_\varepsilon
    -\kappa\Laplace \partial_t w_\varepsilon-\tau\Laplace w_\varepsilon
    \\
    &+\rho\Sign_\varepsilon(\partial_tw_\varepsilon+
    \alpha\varphi_\varepsilon-\eta^*),v)
    =(f,v)\quadd\forall v\in H,\text{ a.e. in }(0,T),
  \end{aligned}
  \\[2mm]\label{eq:phase-yosida}
  &(\partial_t\varphi_\varepsilon
  -\Laplace \varphi_\varepsilon
  +\beta_\varepsilon(\varphi_\varepsilon)
  +\pi(\varphi_\varepsilon),v)
  =\gamma(\partial_t w_\varepsilon,v)
  \quadd\forall v\in H,\text{ a.e. in }(0,T).
\end{align}

\paragraph{Passage to the limit as \texorpdfstring{$\varepsilon\to
    0$}{epsilon to 0}.}
Let $\xi_\varepsilon:=\beta_\varepsilon(\varphi_\varepsilon)$ and
$\sigma_\varepsilon:=\Sign_\varepsilon(\partial_t w_\varepsilon
+\alpha\varphi_\varepsilon-\eta^*)$.  We now review the a priori
estimates in order to remove the dependence on $\varepsilon$.  All
calculations are still working if we make the\marginnote{boh? evitiamo overful hbox}
 substitution
\begin{equation*}
  (w_\varepsilon,\varphi_\varepsilon,\xi_\varepsilon,\sigma_\varepsilon)
  \quad\longmapsto\quad
  (w_n,\varphi_n,\beta_\varepsilon(\varphi_n),\Sign_\varepsilon(\partial_tw_n+\alpha\varphi_n-\eta^*_n)).
\end{equation*}
By Remark~\ref{rmrk:epsilon-dependence}, the dependence on
$\varepsilon$ is only given by equation~\eqref{eq:annoying}.  We
observe that, owing to~\eqref{eq:standard-yosida-for-beta},
\begin{equation*}
\norm{\widehat\beta_\varepsilon(\varphi_0)}_{L^1(\Omega)} \leq\norm{\widehat\beta(\varphi_0)}_{L^1(\Omega)},
\end{equation*}
and we had just made the first a priori estimate independent of
$\varepsilon$:
\begin{equation}
  \label{eq:first-a-priori-estimate-without-epsilon}
  \begin{split}
  &\norm{w_\varepsilon}_{
    W^{1,\infty}(0,T;H)\cap
    H^1(0,T;V)
  }
  +\norm{\varphi_\varepsilon}_{
    H^1(0,T;H)\cap
    L^\infty(0,T;V)}
  \\&
  +\rho\int_0^T
  \norm{\partial_tw_\varepsilon+\alpha\varphi_\varepsilon+\eta^*}_{H,\varepsilon}
  +\norm{\widehat\beta_\varepsilon(\varphi_\varepsilon)}_{
    L^\infty(0,T;L^1(\Omega))}
  \leq C.
  \end{split}
\end{equation}
Having removed the dependence on $\varepsilon$ in the first estimate,
all other estimates can be replicated obtaining
\begin{align}
  &\norm{\varphi_\varepsilon}_{L^2(0,T;W)}
  +\norm{\xi_\varepsilon}_{L^2(0,T;H)}
  \leq C,\\
  &\norm{w_\varepsilon}_{W^{1,\infty}(0,T;V)\cap H^1(0,T;W)}
  \leq C,\\
  \label{eq:fourth-a-priori-estimate-without-epsilon}
  &\norm{w_\varepsilon}_{H^2(0,T;H)}
  \leq C(1+\rho^{1/2}).
\end{align}
Moreover, because of the definition of the $\Sign$ operator,
$\sigma_\varepsilon(t)$ is bounded, uniformly with respect to $t$ and
$\varepsilon$, i.e.,
\begin{equation}
  \label{eq:sigma-estimate}
\norm{\sigma_\varepsilon}_{L^\infty(0,T;H)}
\leq 1.
\end{equation}

We are now able to take the limit as $\varepsilon\to 0$ using the same
compactness argument as before.  There exists a quadruplet
$(w,\varphi,\xi,\sigma)$ such that (a subsequence of)
$(w_\varepsilon,\varphi_\varepsilon,\xi_\varepsilon,\sigma_\varepsilon)$
converges to $(w,\varphi,\xi,\sigma)$ in the same topologies as
before.
More precisely for
$\partial_tw_\varepsilon+\alpha\varphi_\varepsilon$,
$\xi_\varepsilon$, and $\sigma_\varepsilon$ we have that
\begin{align}
  \partial_tw_\varepsilon+\alpha\varphi_\varepsilon
  &\to
  \partial_tw+\alpha\varphi
  \quadd
  &&\text{ in }C^0([0,T],H),
  \label{eq:convergence-eta-c0}
  \\
  \xi_\varepsilon
  &\to\xi
  \quadd
  &&\text{ weakly in }
  L^2(0,T;H),
  \\
  \label{eq:sigma-convergence}
  \sigma_\varepsilon&\to\sigma
  \quadd
  &&\text{ weakly in }
  L^2(0,T;H).
\end{align}
We take the limit in equation~\eqref{eq:heat-yosida}
and~\eqref{eq:phase-yosida} obtaining~\eqref{eq:heat}
and~\eqref{eq:phase}, respectively.  Since $\varphi_\varepsilon$ and
$\partial_tw_\varepsilon$ converge strongly in $L^2(0,T;H)=L^2(Q)$
\marginnote{va bene l'argomento scritto cos\`i?}  and
$\xi_\varepsilon$ and $\sigma_\varepsilon$ converge weakly, we deduce
\begin{align*}
\lim_{\varepsilon\to 0}
\int_{Q} \xi_\epsilon\,\varphi_\epsilon&=
\int_{Q} \xi\,\varphi,\\
\lim_{\varepsilon\to 0}
\int_{Q} \sigma_\epsilon\,(\partial_tw_\epsilon+\alpha\varphi_\epsilon-\eta^*)&=
\int_{Q} \sigma\,(\partial_tw+\alpha\varphi-\eta^*).
\end{align*}
Hence, by~\cite[Prop.~2.2,~p.~38]{barbu10}
\marginnote{biblio}
we have that
\begin{equation*}
\xi\in\beta(\varphi)
\quadd
\text{ and }
\quadd
\sigma\in
\Sign(\partial_tw+\alpha\varphi-\eta^*),
\end{equation*}
almost everywhere, and the proof of the existence of the solutions is
complete.

To conclude the proof of Theorem~\ref{thrm:existence} we need to
prove~\eqref{eq:existence-bound1} and~\eqref{eq:existence-bound2}.
Owing to the lower semi-continuity of the norms, the Fatou lemma, and
part 4 of Proposition~\ref{prop:yosida}, we can take the inferior
limit as $\varepsilon\to0$
in~\eqref{eq:first-a-priori-estimate-without-epsilon}--\eqref{eq:sigma-estimate}
deducing
\begin{align}
  &
  \begin{aligned}
     &\norm{w}_{ W^{1,\infty}(0,T;V)\cap H^1(0,T;W)}
     +\norm{\varphi}_{H^1(0,T;H)\cap L^\infty(0,T;V)\cap L^2(0,T;W)}
     \\&
     +\rho\norm{w_t+\alpha\varphi-\eta^*}_{L^1(0,T;H)}+\norm{\widehat\beta(\varphi)}_{L^\infty(0,T;L^1(\Omega))}
     \\&
     +\norm{\xi}_{L^2(0,T;H)}
     +\norm{\sigma}_{L^\infty(0,T;H)}
     \leq C,
  \end{aligned}
  \\[2mm]&
  \norm{w}_{H^2(0,T;H)}  \leq C(1+\rho^{1/2}).
\end{align}

\subsection{Existence of solutions for \problem{B}}
The proof of Theorem~\ref{thrm:existenceB} is similar to the proof of
the previous one.
For this reason \pier{the} steps with few relevance will be omitted.
\paragraph{Faedo--Galerkin approximation.}
Most of the first paragraph of the previous subsection can be
replicated {\em verbatim}.
The first difference appears in
equation~\eqref{eq:proj-target-function}; in this case, we project the
target function~$\varphi^*$
\begin{equation}
\varphi^*_{n}:=P_n \varphi^*.
\end{equation}
The approximated problem is to find two functions $w_n\in
C^2([0,T];V_n)$ and $\varphi_n\in C^1([0,T];V_n)$, such
that\marginnote{formulazione variazionale}
\begin{align}
  \label{eq:heatB-faedo-galerkin}
  &
  \begin{aligned}
    &
    (\partial_t^2w_n+l\partial_t\varphi_n
    -\kappa\Laplace \partial_t w_n-\tau\Laplace w_n,v)
    =(f_n,v),
    \\&
    \quadd\forall v\in V_n,\text{ in }[0,T],
  \end{aligned}
  \\[2mm]
  \label{eq:phaseB-faedo-galerkin}
  &
  \begin{aligned}
  &(\partial_t\varphi_n
  -\Laplace \varphi_n
  +\beta_\varepsilon(\varphi_n)
  +\pi(\varphi_n)
  +\rho\Sign_\varepsilon(\varphi_n-\varphi^*_n),v)
  \\&
  \quadd=\gamma(\partial_t w_n,v),
  \quadd\forall v\in V_n,\text{ in }[0,T],
  \end{aligned}
  \\[2mm]
  \label{eq:initial-dataB-faedo-galerkin}
  &\partial_t w_n(0)=\vartheta_{0,n}
,\quadd
w_n(0)=w_{n,0}
,\quadd
\varphi_n(0)=\varphi_{n,0}.  
\end{align}
Like for \problem{A}, the system above has a unique solution, given by
Cauchy--Lipschitz theorem.

\paragraph{First a priori estimate.}
We take $v=\partial_tw_n+l\varphi_n$ and $v=\kappa
l^2(\varphi_n-\varphi_n^*)$ in
equations~\eqref{eq:heatB-faedo-galerkin}
and~\eqref{eq:phaseB-faedo-galerkin}, respectively.
We sum up, integrate between $0$ and $t$, and, \pier{taking~\eqref{eq:quasi-norma}} into
account, reorder obtaining
\begin{equation*}
  \begin{aligned}
    &
    \frac{1}{2}\norm{\partial_tw_n+l\varphi_n}_H^2
    +\kappa\int_0^t\norm{\partial_t\nabla w_n}_H^2
    +\frac{\tau}{2}\norm{\nabla w_n}_H^2
    +\kappa l\int_0^{t}(\partial_t\nabla w_n,\nabla\varphi_n)
    \\&
    +\frac{\kappa l^2}{2}\norm{\varphi_n}_H^2
    +\kappa l^2\int_{0}^{t}\norm{\nabla\varphi_n}_H^2
    +\kappa l^2\int_0^t(\nabla\varphi_n,\nabla\varphi_n^*)
    \\&
    +\kappa l^2
    \int_0^t
    (\beta_\varepsilon(\varphi_n)-\beta_\varepsilon(\varphi_n^*),
    \varphi_n-\varphi_n^*)
    +\rho\kappa l^2
    \int_0^t\norm{\varphi_n-\varphi_n^*}_{H,\varepsilon}
    \\&
    \leq
    \frac{1}{2}\norm{\partial_tw_{0,n}+l\varphi_{0,n}}_H^2
    +\frac{\tau}{2}\norm{\nabla w_{0,n}}_H^2
    +\frac{\kappa l^2}{2}\norm{\varphi_{0,n}}_H^2
    \\&
    -\tau l
    \int_0^t(\nabla w_n,\nabla\varphi_n)
    +\int_0^t(f_n,\partial_tw_n+l\varphi_n)
    \\&
    -\kappa l^2
    \int_0^t(\pi(\varphi_n),\varphi_n-\varphi_n^*)
    +\gamma\kappa l^2
    \int_0^t(\partial_tw_n,\varphi_n-\varphi_n^*)
    \\&
    -\kappa l^2
    \int_0^t(\beta_\varepsilon(\varphi_n^*),\varphi_n-\varphi_n^*)
    +\kappa l^2(\varphi_n,\varphi_n^*)
    .
  \end{aligned}
\end{equation*}
Regarding the left-hand side of the inequality above, using the Young
inequality, it can be easily proven~that
\begin{align*}
  &
  \frac{1}{2}\norm{\partial_tw_n+l\varphi_n}_H^2
  +\frac{\kappa l^2}{2}\norm{\varphi_n}_H^2
  \geq
  \cfrac{l^2\kappa}{4}\norm{\varphi_n}_H^2
  +\frac{\kappa}{2(\kappa+2)}\norm{\partial_tw_n}_H^2,
  \\&
  \norm{\partial_t\nabla w_n}_H^2
  +l(\partial_t\nabla w_n,\nabla\varphi_n)
  +l^2\norm{\nabla\varphi_n}_H^2
  \geq
  \frac{1}{2}(\norm{\partial_t\nabla w_n}_H^2
  +l^2\norm{\nabla\varphi_n}_H^2).
\end{align*}
The non-constant terms in the right-hand side of the inequality are
easily controlled via the Young inequality\pier{: we have that}
\begin{align*}
  &
  \int_0^t(\pi(\varphi_n),\varphi_n-\varphi_n^*)
  \leq
  C\int_0^t(\norm{\varphi_n}_H^2+1),
  \\&
  \tau l\int_0^t(\nabla w_n,\nabla\varphi_n)
  \leq
  \frac{\kappa l^2}{4}\int_0^t\norm{\nabla\varphi_n}_H^2
  +\frac{\tau}{\kappa}\int_0^t\norm{\nabla w_n}_H^2,
  \\&
  \int_0^t(f_n,\partial_tw_n-l\varphi_n)
  \leq C+\int_0^t(\norm{\partial_tw_n}_H^2+l^2\norm{\varphi_n}_H^2),
  \\&
  \int_0^t(\partial_tw_n,\varphi_n-\varphi_n^*),
  \leq
  C+\frac{1}{2}\int_0^t(\norm{\partial_tw_n}_H^2+\norm{\varphi_n}_H^2)
\end{align*}
\pier{as well as} 
\begin{align*} 
  &\kappa l^2(\varphi_n,\varphi_n^*)
  \leq\frac{\kappa l^2}{8}\norm{\varphi_n}_H^2
  +2\kappa l^2\norm{\varphi^*_n}_H^2,
  \\&
  \int_0^t(\beta_\varepsilon(\varphi_n^*),\varphi_n-\varphi_n^*)
  \leq
  \frac 1 2 \int_0^t\norm{\beta_\varepsilon(\varphi_n^*)}_H^2
  +\int_0^t\norm{\varphi_n}_H^2+T\norm{\varphi^*_n}_H^2
  .
\end{align*}
We notice that (cf.~\eqref{eq:beta-lipschitz}) \marginnote{tra un po' servir\`a $\beta(\varphi^*)\in
  H$}
\begin{equation}
  \label{eq:annoyingB}
  \int_0^t\norm{\beta_\varepsilon(\varphi_n^*)}_H^2
  \leq T\varepsilon^{-2}\norm{\varphi_n^*}_H^2\leq C\varepsilon^{-2}.
\end{equation}
We can use Gronwall lemma obtaining
\begin{equation}
  \label{eq:first-a-priori-estimateB}
  \myboxed{
  \begin{aligned}
    &
    \norm{w_n}_{W^{1,\infty}(0,T;H)\cap H^1(0,T;V)}
    +\norm{\varphi_n}_{L^\infty(0,T;H)\cap L^2(0,T;V)}
    \\&
    +\rho\int_0^T\norm{\varphi_n-\varphi_n^*}_{H,\varepsilon}
    \leq C(1+\varepsilon^{-1}).
  \end{aligned}
  }
\end{equation}
The dependence on $\varepsilon$ is given only by
equation~\eqref{eq:annoyingB}.
However, Remark~\ref{rmrk:epsilon-dependence} still remains valid, so
the dependence on $\varepsilon$ in
equation~\eqref{eq:first-a-priori-estimateB} will be later removed.
\paragraph{Second a priori estimate.}
We take $v=\partial_t\varphi_n$ in~\eqref{eq:phaseB-faedo-galerkin}
and integrate over $(0,t)$, obtaining
\begin{equation*}
  \begin{aligned}
    &
    \int_0^t\norm{\partial_t\varphi_n}_H^2
    +\norm{\nabla\varphi_n}_H^2
    +\norm{\widehat\beta_\varepsilon(\varphi_n)}_{L^1(\Omega)}
    +\rho\norm{\varphi_n-\varphi_n^*}_{H,\varepsilon}
    \\&
    =\norm{\nabla\varphi_{n,0}}_H^2
    +\norm{\widehat\beta_\varepsilon(\varphi_{n,0})}_{L^1(\Omega)}
    +\rho\norm{\varphi_{n,0}-\varphi_n^*}_{H,\varepsilon}
    \\&
    \quad +\int_0^t(-\pi(\varphi_n)+\gamma\partial_t
    w_n,\partial_t\varphi_n)
    \\&
    \leq
    C(1+\rho+\varepsilon^{-1})
    +\int_0^t(-\pi(\varphi_n)+\gamma\partial_t
    w_n,\partial_t\varphi_n)    
    .
  \end{aligned}
\end{equation*}
The last integral can be estimated using Young inequality, the
Lipschitz-conti\-nuity of $\pi$, and the first a priori estimate\pier{:}
\begin{equation*}
  \begin{aligned}
    &
  \int_0^t(-\pi(\varphi_n)+\gamma\partial_t w_n,\partial_t\varphi_n)
  \\&
  \leq
  \frac{1}{2}
  \norm{-\pi(\varphi_n)+\gamma\partial_t w_n}_{L^2(0,T;H)}^2
  +\frac{1}{2}\int_0^t\norm{\partial_t\varphi_n}_H^2
  \\&
  \leq
  C(1+\varepsilon^{-1})
  +\frac{1}{2}\int_0^t\norm{\partial_t\varphi_n}_H^2.
  \end{aligned}
\end{equation*}
Hence, we deduce
\begin{equation}
  \myboxed{
  \begin{aligned}
    &
  \norm{\varphi_n}_{H^1(0,T;H)\cap L^\infty(0,T;V)}
  +\norm{\widehat\beta_\varepsilon(\varphi_n)}_{L^\infty(0,T;L^1)}
  \\&
  +\rho\sup_{t\in(0,T)}\norm{\varphi_n(t)-\varphi_n^*}_{H,\varepsilon}
  \leq C(1+\rho^{1/2}+\varepsilon^{-2}).
  \end{aligned}
  }
  \label{eq:second-a-priori-estimateB}
\end{equation}

\paragraph{Third a priori estimate.}
We define $g_1:[0,T]\to H$ as
\begin{equation*}
  g_1=-\partial_t\varphi_n-\pi(\varphi_n)+\gamma\partial_t w_n
  .
\end{equation*}
Clearly\pier{, it holds that} $\norm{g_1}_{L^2(0,T;H)}\leq C(1+\rho+\varepsilon^{-1})$,
and we can rewrite equation~\eqref{eq:phaseB-faedo-galerkin} as
\begin{equation*}
  (-\Laplace\varphi_n+\beta_\varepsilon(\varphi_n)
  +\rho\Sign_\varepsilon(\varphi_n-\varphi_n^*),v)
  =(g_1,v).
\end{equation*}
We take $v=-\Laplace\varphi_n$ and \pier{obtain}
\begin{equation*}
  \begin{aligned}
  &\norm{\Laplace\varphi_n}_H^2
  +\int_\Omega\beta_\varepsilon'(\varphi_n)|\nabla\varphi_n|^2
  +\rho(\nabla\Sign_\varepsilon(\varphi_n-\varphi_n^*),
  \nabla(\varphi_n-\varphi_n^*))
  \\&
  =(g_1,\Laplace\varphi_n)
  +\rho(\Sign_\varepsilon(\varphi_n-\varphi_n^*),\Laplace\varphi_n^*)
  \\&
  \leq\frac 1 2 \norm{g_1}_H^2
  +\frac 1 2 \norm{\Laplace\varphi_n}_H^2
  +\rho\norm{\Laplace\varphi_n^*}_H.
  \end{aligned}
\end{equation*}
Since the integral in the left-hand side of the equation above is non
negative, we can forget about it.
We can also cancel the third term, because
\begin{equation*}
  (\nabla\Sign_\varepsilon(\varphi_n-\varphi_n^*),\nabla(\varphi_n-\varphi_n^*))
  =\frac{\norm{\nabla(\varphi_n-\varphi_n^*)}_H^2}
  {\max\{\varepsilon,\norm{\varphi_n-\varphi_n^*}_H\}}
  \geq0.
\end{equation*}
We integrate over time, obtaining
\begin{equation*}
  \norm{\Laplace\varphi_n}_{L^2(0,T;H)}\leq
  C(1+\rho^{1/2}+\varepsilon^{-1}).
\end{equation*}
By comparison and the elliptic regularity, we deduce that
\begin{equation}
  \myboxed{
    \begin{aligned}
      &
    \norm{\varphi_n}_{L^2(0,T;W)}
    +\norm{\beta_\varepsilon(\varphi_n)
      +\rho\Sign_\varepsilon(\varphi_n-\varphi_n^*)}_{L^2(0,T;H)}
    \\&
    \leq C(1+\rho^{1/2}
    +\varepsilon^{-1}).
    \end{aligned}
  }
\end{equation}

\paragraph{Fourth a priori estimate.}
We define $\eta_n:[0,T]\to V_n$ as
\begin{gather*}
  \eta_n(t):=\partial_t w_n(t)+l\varphi_n(t) \pier{, \quadd t\in [0,T].}
\end{gather*}
Thanks to the estimate~\eqref{eq:first-a-priori-estimateB} we have that
\begin{equation*}
\norm{\eta_n}_{L^\infty(0,T;H)}\leq C(1+\varepsilon^{-1}).
\end{equation*}
Moreover, equation~\eqref{eq:initial-data-hypothesis} implies
\begin{equation*}
  \norm{\eta_n(0)}_V=\norm{\vartheta_{0,n}+l\varphi_{0,n}}_V \leq C.
\end{equation*}
We rewrite equation~\eqref{eq:heatB-faedo-galerkin} as
\begin{equation*}\label{eq:heat-rewrittenB}
  \left(\partial_t\eta_n-\kappa\partial_t\Laplace w_n-
  \tau\Laplace w_n,v\right)
  =(f_n,v).
\end{equation*}
Thus, we test the equation above with $v=-\Laplace\eta_n(t)$
and we integrate over time finding
\begin{equation*}
\begin{aligned}
  &\cfrac{1}{2}\norm{\nabla\eta_n(t)}_H^2
  +\kappa\int_0^t\norm{\partial_t\Laplace w_n}_H^2
  +\cfrac{\tau}{2}
  \norm{\Laplace w_n}_H^2
  \\&
  \leq
  C+\kappa l\int_0^t(\partial_t\Laplace w_n,\Laplace\varphi_n)
  +\tau l\int_0^t(\Laplace w_n,\Laplace\varphi_n)
  -\int_0^t (f_n,\Laplace\eta_n).
\end{aligned}
\end{equation*}
The terms in the right-hand side of the inequality above can be
controlled easily
\begin{align*}
  &
  \kappa l\int_0^t(\partial_t\Laplace w_n,\Laplace\varphi_n)
  \leq
  \int_0^t\frac{\kappa}{2}\norm{\partial_t\Laplace w_n}_H^2
  +\int_0^t\frac{l^2}{2\kappa}\norm{\Laplace \varphi_n}_H^2,
  \\&
  \int_0^t(\Laplace w_n,\Laplace\varphi_n)
  \leq
  \frac{1}{2}\int_0^t(\norm{\Laplace w_n}_H^2+\norm{\Laplace\varphi_n}_H^2),
  \\&
  \int_0^t (f_n,\Laplace\eta_n)
  \leq
  C+
  \frac{\kappa}{4}\int_0^t
  \norm{\partial_t\Laplace w_n}_H^2.
\end{align*}
Hence, we infer
\begin{equation*}
\norm{\nabla\eta_n}_{L^\infty(0,T;H)}+\norm{\partial_t\Laplace w_n}_{L^2(0,T;H)}\leq
C(1+\rho^{1/2}+\varepsilon^{-1}),
\end{equation*}
\marginnote{attenzione, ho aggiunto la parte sul gradiente}which,
together with elliptic regularity and the
estimate~\eqref{eq:second-a-priori-estimateB}, implies
\begin{equation*}
\norm{\partial_tw_n}_{L^\infty(0,T;V)} +
\norm{\partial_tw_n}_{L^2(0,T;W)}\leq C(1+\rho^{1/2}+\varepsilon^{-1}).
\end{equation*}
Finally, as $w_{0,n}$ is bounded in $W$, we conclude that \marginnote{ora serve
  $w_0\in W$ e non solo in $H^2$}
\begin{equation}
  \myboxed{
\norm{w_n}_{W^{1,\infty}(0,T;V)\cap H^1(0,T;W)}
\leq C(1+\rho^{1/2}+\varepsilon^{-1}).
}
\end{equation}
By comparison in~\eqref{eq:heatB-faedo-galerkin}, it \pier{follows} that
\begin{equation}
  \myboxed{
    \norm{w_n}_{H^2(0,T;H)\cap W^{1,\infty}(0,T;V)\cap H^1(0,T;W)}
    \leq C(1+\rho^{1/2}+\varepsilon^{-1}).}
\end{equation}

\paragraph{Passage to the limit.}
The arguments used in the previous proof for the passage to the limit
in the Faedo--Galerkin scheme and as $\varepsilon\to 0$ are still
perfectly working.
First, passing to the limit with respect to $n$
in~\eqref{eq:heatB-faedo-galerkin}--\eqref{eq:initial-dataB-faedo-galerkin},
we find a pair $(w_\varepsilon,\varphi_\varepsilon)$ satisfying
\begin{align}
  \label{eq:heatB-yosida}
  &
  \begin{aligned}
    &
    (\partial_t^2w_\varepsilon+l\partial_t\varphi_\varepsilon
    -\kappa\Laplace \partial_t w_\varepsilon-\tau\Laplace w_\varepsilon,v)
    =(f,v),
    \\&
    \quadd\forall v\in H,\text{ a.e.\ in }[0,T],
  \end{aligned}
  \\[2mm]
  \label{eq:phaseB-yosida}
  &
  \begin{aligned}
  &(\partial_t\varphi_\varepsilon
  -\Laplace \varphi_\varepsilon
  +\xi_\varepsilon
  +\pi(\varphi_\varepsilon)
  +\rho\sigma_\varepsilon
  ,v)
  =\gamma(\partial_t w_\varepsilon,v),
  \\&
  \quadd\forall v\in H,\text{ a.e.\ in }[0,T],
  \end{aligned}
  \\[2mm]
  &\partial_t w_\varepsilon(0)=\vartheta_{0}
,\quadd
w_\varepsilon(0)=w_{0}
,\quadd
\varphi_\varepsilon(0)=\varphi_{0},
\end{align}
where
\begin{align}
  \label{eq:xi-and-sigma-epsilonB}
  &
  \xi_\varepsilon:=\beta_\varepsilon(\varphi_\varepsilon)
  \quadd\text{ and }\quadd
  \sigma_\varepsilon:=\Sign_\varepsilon(\varphi_\varepsilon-\varphi^*).
\end{align}

We want to replicate the a priori estimate for this new setting.
As we have already pointed out, the only dependence on $\varepsilon$
are given via the estimate~\eqref{eq:annoyingB}.
We observe that (cf.~\eqref{eq:standard-yosida-for-beta})
\begin{equation}
  \norm{\beta_\varepsilon(\varphi^*)}_H\leq\norm{\pier{\beta^\circ}(\varphi^*)}_H,
\end{equation}
thus estimate~\eqref{eq:first-a-priori-estimateB} improves to
\begin{equation}
  \begin{aligned}
    &
    \norm{w_\varepsilon}_{W^{1,\infty}(0,T;H)\cap H^1(0,T;V)}
    +\norm{\varphi_\varepsilon}_{L^\infty(0,T;H)\cap L^2(0,T;V)}
    \\&
    +\rho\int_0^T\norm{\varphi_\varepsilon-\varphi^*}_{H,\varepsilon}
    \leq C,
  \end{aligned}
  \label{eq:first-a-priori-estimate-without-epsilonB}
\end{equation}
and we have just made the first a priori estimate independent of
$\varepsilon$.
Having removed the dependence on $\varepsilon$ in the first estimate,
all other estimates can be replicated obtaining
\begin{align}
  &
  \begin{aligned}
    &
    \norm{\varphi_\varepsilon}_{H^1(0,T;H)\cap L^\infty(0,T;V)}
    +\norm{\widehat\beta_\varepsilon(\varphi_\varepsilon)}_{L^\infty(0,T;L^1)}
    \\&
    +\rho\sup_{t\in(0,T)}\norm{\varphi_\varepsilon(t)-\varphi^*}_{H,\varepsilon}
    \leq C(1+\rho^{1/2}),
  \end{aligned}
  \\
  &
  \begin{aligned}
    &
    \norm{\varphi_\varepsilon}_{L^2(0,T;W)}
    +\norm{\xi_\varepsilon
      +\rho\sigma_\varepsilon}_{L^2(0,T;H)}
    \leq C(1+\rho^{1/2}),
  \end{aligned}
  \\&
  \begin{aligned}
    &
  \norm{w_\varepsilon}_{H^2(0,T;H)\cap W^{1,\infty}(0,T;V)\cap H^1(0,T;W)}
  \leq C(1+\rho^{1/2}).
  \end{aligned}
  \label{eq:fourth-a-priori-estimate-without-epsilonB}
\end{align}
Moreover, because of the definition of the $\Sign$ operator,
$\sigma_\varepsilon(t)$ is bounded, uniformly with respect to $t$ and
$\varepsilon$, i.e.,
\begin{equation}
  \label{eq:sigma-estimateB}
\norm{\sigma_\varepsilon}_{L^\infty(0,T;H)}
\leq 1.
\end{equation}
By comparison we have that $\norm{\xi_\varepsilon}_{L^2(0,T;H)}\leq
C(1+\rho)$.
Like for \problem{A}, we can use standard compactness
results and extract a subsequence such that
\begin{equation}
  (w_\varepsilon,\varphi_\varepsilon,\xi_\varepsilon,\sigma_\varepsilon)
  \to
  (w,\varphi,\xi,\sigma)
\end{equation}
in a \pier{suitable} topology for the
spaces~\eqref{eq:regw}--\eqref{eq:regsigma} where the solutions are
set.
Moreover, we have that
\begin{equation}
  \varphi_\varepsilon\to\varphi
  \quadd
  \text{ in }
  C^0([0,T];H).
  \label{eq:phase-convergence-c0B}
\end{equation}
We conclude that \problem{B} admits a solution and that~\eqref{eq:existence-bound1B}--\eqref{eq:existence-bound2B}
hold, using the same argument used in \problem{A}.

The uniqueness of the solution follows immediately from
Theorem~\ref{thrm:continuous-dependenceB}, and it is treated in
Section~\ref{continuous-dependence}.

\section{Further regularity}
This section contains the proofs of
Theorems~\ref{thrm:further-regularity}
and~\ref{thrm:further-regularityB}.
For the sake of clarity, both proofs can be divided in two parts.
In the former part we use the notations of the Faedo--Galerkin scheme
to prove that the limit function $\varphi_\varepsilon$ is more regular
(in particular $\varphi_\varepsilon\in H^1(0,T;V)$).
In the latter part, \marginnote{ho scritto subsubsection} the desired
estimates~\eqref{eq:further-regularity}
and~\eqref{eq:further-regularityB} are shown for the approximate
solution $\varphi_\varepsilon$.
Using the usual compactness argument and the lower semi-continuity
\marginnote{\`e giusto?} of the norm, the
estimates~\eqref{eq:further-regularity}
and~\eqref{eq:further-regularityB} will follow automatically.
In the present proofs, $C$ still denotes a positive constant independent of
$\rho$ and $\varepsilon$.

\subsection{Further regularity for \problem{A}}
We now prove Theorem~\ref{thrm:further-regularity}.
We consider equation~\eqref{eq:phase-faedo-galerkin} and note that: 1)
the functions $\Laplace\varphi_n$ and $\partial_tw_n$ are derivable
and their derivatives are $\Laplace\partial_t\varphi_n$ and
$\partial_t^2w_n$, respectively; 2) for all $v\in V_n$, the functions
\begin{equation*}
  t\mapsto(\beta_\varepsilon(\varphi_n(t)),v)
  \quadd\text{ and }\quadd
  t\mapsto(\pi(\varphi_n(t)),v)  
\end{equation*}
are Lipschitz-continuous, thus derivable a.e.\ in $(0,T)$ with
derivative
\begin{equation*}
  (\beta_\varepsilon'(\varphi_n(t))\partial_t\varphi_n(t),v)
  \quadd\text{ and }\quadd
  (\pi'(\varphi_n(t))\partial_t\varphi_n(t),v),
\end{equation*}
respectively.  Hence, $\partial_t\varphi_n$ is Lipschitz-continuous by
comparison and we can derive (in weak sense)
equation~\eqref{eq:phase-faedo-galerkin} obtaining
\begin{equation}\label{eq:further-regularity-faedo-galerkin}
  (\partial_t^2\varphi_n-\Laplace\partial_t\varphi_n
  +\beta_\varepsilon'(\varphi_n)\partial_t\varphi_n,v)=(g_4,v),
  \quadd\forall v\in V_n,\text{ a.e. in }(0,T),
\end{equation}
where
\begin{equation*}
  g_4:=-\pi'(\varphi_\varepsilon)\partial_t\varphi_n+
  \gamma\partial_t^2w_n
  .
\end{equation*}
Clearly $\norm{g_4}_{L^2(0,T;H)}\leq
C(1+\varepsilon^{-1}+\rho^{1/2})$, as $\pi'$ is bounded. We take
$v=\partial_t \varphi_\varepsilon$
in equation~\eqref{eq:further-regularity-faedo-galerkin} and we integrate over
$(0,t)$ obtaining
\begin{equation}
  \label{eq:further-regularity-big-estimate}
  \begin{split}
    &\cfrac{1}{2}
    \norm{\partial_t\varphi_n(t)}_H^2
    +\int_0^t
    \norm{\nabla\partial_t\varphi_n}_H^2
    +\int_{Q_t}
    \beta_\varepsilon'(\varphi_n)
    |\partial_t\varphi_n|^2
    \\&
    =\cfrac{1}{2}
    \norm{\partial_t\varphi_n(0)}_H^2
    +\int_0^t
    (g_4,\partial_t\varphi_n)
    \\&
    \leq
    \cfrac{1}{2}
    \norm{\partial_t\varphi_n(0)}_H^2
    +\cfrac{1}{2}
    \int_0^t
    \norm{g_4}_H^2
    +\cfrac{1}{2}
    \int_0^t
    \norm{\partial_t\varphi_n}_H^2
    \\&
    \leq
    \cfrac{1}{2}
    \norm{\partial_t\varphi_n(0)}_H^2
    +\cfrac{1}{2}
    \int_0^t
    \norm{\partial_t\varphi_n}_H^2
    +C(1+\varepsilon^{-2}+\rho).
  \end{split}
\end{equation}
Since $\beta_\varepsilon$ is monotone, we have that
$\beta_\varepsilon'\geq 0$ implies
$\int_{Q_t}\beta_\varepsilon'(\varphi_n)|\partial_t\varphi_n|^2\geq0$.
At this point, we want to use the Gronwall lemma to control
$\norm{\partial_t\varphi_n}_{L^2(0,T;H)}$.  The most delicate part is
to find a bound on $\norm{\partial_t\varphi_n(0)}_H$.  Using again
equation~\eqref{eq:phase-faedo-galerkin} we compute \marginnote{forse
  da richiamare meglio}
\begin{equation}\label{eq:further-regularity-initial-data}
  \begin{aligned}
    \norm{\partial_t\varphi_n(0)}_H&=
    \norm{\gamma\vartheta_{0,n}+\Laplace\varphi_{0,n}
      -P_n(\beta_\varepsilon(\varphi_{0,n}))
      -P_n(\pi(\varphi_{0,n}))}_H\\
    &\leq\gamma\norm{\vartheta_{0,n}}_H+\norm{\Laplace\varphi_{0,n}}_H
      +\norm{P_n(\pi(\varphi_{0,n}))}_H
    +\norm{P_n(\beta_\varepsilon(\varphi_{0,n}))}_H\\
    &\leq\gamma\norm{\vartheta_{0}}_H+\norm{\Laplace\varphi_{0}}_H
    +C\norm{\varphi_0}_H+\varepsilon^{-1}\norm{\varphi_0}_H
    \leq C(1+\varepsilon^{-1}) \pier{,}
  \end{aligned}
\end{equation}
where the fact that $\beta_\varepsilon$ is
$\varepsilon^{-1}$-Lipschitz-continous and the hypothesis
$\varphi_{0}\in W$ have been taken into account.  We incidentally note
that we have not yet used the hypothesis $\beta^\circ(\varphi_0)\in
H$.  Owing to the Gronwall lemma, we obtain
\begin{equation}\label{eq:further-regularity-epsilon}
\norm{\varphi_n}_{
  W^{1,\infty}(0,T;H)
  \cap
  H^1(0,T;V)}
\leq C(1+\varepsilon^{-1}+\rho^{1/2}).
\end{equation}
In view of equations~\eqref{eq:phase-convergence-l2}
and~\eqref{eq:phase-convergence-linfty}, we additionally have that
\begin{equation}
  \label{eq:another-convergence-varphi}
  \varphi_n\to\varphi_\varepsilon
  \quadd\text{ weakly* in }
  W^{1,\infty}(0,T;H)\cap H^1(0,T;V).
\end{equation}
This proves that $\varphi_\varepsilon$ belongs to
$W^{1,\infty}(0,T;H)\cap H^1(0,T;V)$.

In this second part we refine our argument, removing the dependence on
$\varepsilon$ in the estimate~\eqref{eq:further-regularity-epsilon}.
Like the former part of this proof, we want to derive
equation~\eqref{eq:phase-yosida}.  Since nothing ensures the
weak-derivability of $\Laplace\varphi_\varepsilon$, we take $v\in V$
and \pier{rewrite}~\eqref{eq:phase-yosida} using the Gauss theorem
\begin{equation*}
  (\partial_t\varphi_\varepsilon,v)
  +(\nabla\varphi_\varepsilon,\nabla v)
  +(\beta_\varepsilon(\varphi_\varepsilon),v)
  =(-\pi(\varphi_\varepsilon)
  +\gamma\partial_t w_\varepsilon,v).
\end{equation*}
At this point, since $\varphi_\varepsilon\in H^1(0,T;V)$ and the
considerations on the weak-de\-riv\-abil\-i\-ty of
$\beta_\varepsilon(\varphi_\varepsilon)$, $\pi(\varphi_\varepsilon)$
and $\partial_t w_\varepsilon$ remain valid,
$\partial_t\varphi_\varepsilon$ is derivable with respect to time.
We derive the above equation finding
\begin{equation*}
  (\partial_t^2\varphi_\varepsilon,v)
  +(\nabla\partial_t\varphi_\varepsilon,\nabla v)
  +(\beta_\varepsilon'(\varphi_\varepsilon)\partial_t\varphi_\varepsilon,v)
  =(\tilde g_4,v),
  \quadd\forall v\in V,
  \text{ a.e. in }(0,T),
\end{equation*}
where $\tilde g_4$ is defined in the same way as $g_4$ and it
satisfies $\norm{\tilde g_4}_{L^2(0,T;H)}\leq C(1+\rho^{1/2})$.  We
take $v=\partial_t\varphi_\varepsilon$ and after some calculations
carried out as in the former part we arrive at
\begin{equation*}
  \cfrac{1}{2}
  \norm{\partial_t\varphi_\varepsilon(t)}_H^2
  +\int_0^t
  \norm{\nabla\partial_t\varphi_\varepsilon}_H^2
  \leq
  \cfrac{1}{2}
  \norm{\partial_t\varphi_\varepsilon(0)}_H^2
  +C(1+\rho)
  +\int_0^t
  \norm{\partial_t\varphi_\varepsilon}_H^2.
\end{equation*}
Now, using the hypothesis $\beta^\circ(\varphi_0)\in H$ we deduce that
\begin{equation*}
  \norm{\beta_\varepsilon(\varphi_0)}_H
  \leq\norm{\beta^\circ(\varphi_0)}_H\leq C.
\end{equation*}
Hence, arguing as in
equation~\eqref{eq:further-regularity-initial-data}, we have that
$\norm{\partial_t\varphi_\varepsilon(0)}_H\leq C$ and \pier{use the Gronwall
lemma} to deduce~that
\begin{equation}
\norm{\varphi_\varepsilon}_{
  W^{1,\infty}(0,T;H)
  \cap
  H^1(0,T;V)}
\leq C(1+\rho^{1/2}).
\label{eq:partial-t-varphi-l-infty}
\end{equation}
By comparison, we find
$\norm{-\Laplace\varphi_\varepsilon+\xi_\varepsilon}_{L^\infty(0,T;H)}
\leq C(1+\rho^{1/2})$, \pier{whence}, by the \pier{same argument as
  in} the second a priori
estimate~\eqref{eq:second-a-priori-estimate}, we conclude that
$\Laplace\varphi_\varepsilon, \xi_\varepsilon\in L^\infty(0,T;H)$ and
that
\begin{equation}
\norm{\Laplace\varphi_\varepsilon}_{L^\infty(0,T;H)}
+\norm{\xi_\varepsilon}_{L^\infty(0,T;H)}
\leq C(1+\rho^{1/2}).
\label{eq:laplace-varphi-l-infty}
\end{equation}

\subsection{Further regularity for \problem{B}}%
Since the proof of Theorem~\ref{thrm:further-regularityB} is based on
the same idea of the previous proof, the present proof is just
sketched and we will focus only on the differences.
We consider equation~\eqref{eq:phaseB-faedo-galerkin} and, like
before, we want to derive it with respect to the time.
The terms $\Laplace\varphi_n$, $\beta_\varepsilon(\varphi_n)$,
$\pi(\varphi_n)$, and $\partial_tw$ are weakly-differentiable by the
considerations made in the previous subsection\marginnote{ ho detto
  subsection}.
If the term $\rho\Sign_\varepsilon(\varphi_n-\varphi^*_n)$ were
weakly-differentiable, we could carry out the proof as before; the
weak-derivability of this term is shown in the following lemma.%
\marginnote{Questo lemma serve per differenziare il segno.  Cerchiamo
  di evitare degli Stampacchia infinito-dimensionale.}
\begin{lem}
\label{lem:sign-derivative}
Let $\varepsilon>0$ and $u\in H^1(0,T;H)$.
Let $A=\{t\in [0,T]: \norm{\varphi(t)}_H\leq \varepsilon\}$ and
$B=[0,T]\backslash A$.
Then $\Sign_\varepsilon(u)\in H^1(0,T;H)$ and
\begin{align}
&
  \frac{\de}{\de t}\Sign_\varepsilon(u)=
  \begin{dcases}
    \frac{u_t}{\varepsilon}
  &\text{a.e.\ in }A,
  \\
  \frac{u_t}{\norm{u}_H}
  -\frac{(u,u_t)}{\norm{u}_H^3}u
  &\text{a.e.\ in }B.
  \end{dcases}
\end{align}
Moreover, \pier{we have that}
\begin{equation}
  \label{eq:sign-derivative-positive}
  \left(\frac{\de}{\de t}\Sign_\varepsilon(u),u_t\right)\geq0
  \quadd\text{ a.e.\ in }[0,T].
\end{equation}
\end{lem}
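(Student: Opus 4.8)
The plan is to exploit that $\Sign_\varepsilon\colon H\to H$, given by $\Sign_\varepsilon(v)=v/\max\{\varepsilon,\norm{v}_H\}$, is globally Lipschitz-continuous with constant $\varepsilon^{-1}$ (part~5 of Proposition~\ref{prop:yosida}) and is, in addition, of class $C^1$ on each of the two open regions $\{\norm{v}_H<\varepsilon\}$ and $\{\norm{v}_H>\varepsilon\}$, the only failure of differentiability occurring on the sphere $\{\norm{v}_H=\varepsilon\}$.

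First I would settle the membership $\Sign_\varepsilon(u)\in H^1(0,T;H)$. Since $u\in H^1(0,T;H)$ admits an absolutely continuous representative with $u(t)=u(0)+\int_0^tu_t$, the Lipschitz bound yields $\norm{\Sign_\varepsilon(u(t))-\Sign_\varepsilon(u(s))}_H\le\varepsilon^{-1}\int_s^t\norm{u_t}_H$, so $\Sign_\varepsilon(u)$ is absolutely continuous and differentiable for a.e.\ $t$, with $\norm{\tfrac{\de}{\de t}\Sign_\varepsilon(u)}_H\le\varepsilon^{-1}\norm{u_t}_H\in L^2(0,T)$; hence $\Sign_\varepsilon(u)\in H^1(0,T;H)$.

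Next I would compute the derivative on the three relevant sets. As $t\mapsto\norm{u(t)}_H$ is continuous, both $\{\norm{u}_H<\varepsilon\}$ and $B=\{\norm{u}_H>\varepsilon\}$ are open in $[0,T]$. On the former $\Sign_\varepsilon(u)=u/\varepsilon$, so the derivative is $u_t/\varepsilon$; on $B$ the map is $C^1$, so the chain rule in $H$ gives $\tfrac{\de}{\de t}\Sign_\varepsilon(u)=\tfrac{u_t}{\norm{u}_H}-\tfrac{(u,u_t)}{\norm{u}_H^3}u$. The main obstacle is the sphere $C=\{\norm{u}_H=\varepsilon\}$, where $\Sign_\varepsilon$ is not differentiable and a direct chain rule is unavailable. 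Here I would invoke the standard fact that an $H$-valued Sobolev function has vanishing time-derivative a.e.\ on the set where it vanishes, applied to $D:=\Sign_\varepsilon(u)-u/\varepsilon\in H^1(0,T;H)$: since $\norm{u}_H=\varepsilon$ on $C$ we have $\Sign_\varepsilon(u)=u/\varepsilon$, whence $D=0$ on $C$. Testing against a countable dense family $\{e_k\}\subset H$, each scalar map $t\mapsto(D(t),e_k)\in W^{1,1}(0,T)$ vanishes on $C$ and thus has null derivative a.e.\ on $C$; taking the intersection of the countably many full-measure sets shows $D_t=0$, i.e.\ $\tfrac{\de}{\de t}\Sign_\varepsilon(u)=u_t/\varepsilon$ a.e.\ on $C$. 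Since, up to a null set, $A=\{\norm{u}_H\le\varepsilon\}$ is the union of $\{\norm{u}_H<\varepsilon\}$ and $C$, both yielding $u_t/\varepsilon$, the announced formula follows.

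Finally, the inequality~\eqref{eq:sign-derivative-positive} is a pointwise consequence of the two formulae: a.e.\ on $A$ one has $\bigl(\tfrac{u_t}{\varepsilon},u_t\bigr)=\varepsilon^{-1}\norm{u_t}_H^2\ge0$, while a.e.\ on $B$
\begin{equation*}
  \left(\frac{u_t}{\norm{u}_H}-\frac{(u,u_t)}{\norm{u}_H^3}u,\,u_t\right)
  =\frac{\norm{u}_H^2\norm{u_t}_H^2-(u,u_t)^2}{\norm{u}_H^3}\ge0
\end{equation*}
by the Cauchy--Schwarz inequality, which completes the plan.
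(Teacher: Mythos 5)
Your proof is correct, but it is organized differently from the paper's. The paper factors $\Sign_\varepsilon(u(t))=l(t)u(t)$ with $l(t)=1/\max\{\varepsilon,\norm{u(t)}_H\}$, applies the (scalar) Stampacchia chain rule to conclude $l\in H^1(0,T)$ and compute $l_t$ on $A$ and $B$, and then finishes with the Leibniz rule; all of the non-smoothness is thereby pushed into a single real-valued function. You instead work directly with the vector-valued composition: the global $\varepsilon^{-1}$-Lipschitz bound plus the Radon--Nikodym property of $H$ gives $\Sign_\varepsilon(u)\in H^1(0,T;H)$ outright, the chain rule handles the two open regions $\{\norm{u}_H<\varepsilon\}$ and $\{\norm{u}_H>\varepsilon\}$, and the critical level set $\{\norm{u}_H=\varepsilon\}$ is dealt with by the ``derivative vanishes a.e.\ where a Sobolev function vanishes'' lemma applied to $D=\Sign_\varepsilon(u)-u/\varepsilon$, made rigorous in $H$ by testing against a countable dense family. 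In effect you reprove the relevant case of Stampacchia's theorem rather than quoting it; the payoff is that your argument is self-contained and makes explicit why the exceptional sphere causes no trouble, whereas the paper's factorization is shorter and reduces everything to standard scalar facts. Both yield the same formulas, and the final Cauchy--Schwarz step for \eqref{eq:sign-derivative-positive} is identical.
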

\begin{proof}
Let $l(t)=1/\max\{\varepsilon,\norm{u(t)}_H\}$.
By \pier{the} Stampacchia theorem, this function belongs to $H^1(0,T)$ and its
derivative is
\begin{align}
  \frac{\de l}{\de t}=
  \begin{dcases}
  0&\text{a.e.\ in }A,
  \\
  -\frac{(u,u_t)}{\norm{u}_H^3}
  \quadd&\text{a.e.\ in }B.
  \end{dcases}
\end{align}
By equation~\eqref{eq:sign-def}, we have that
$\Sign_\varepsilon(u(t))=l(t)u(t)$.
Hence, $\Sign_\varepsilon(u)$ is differentiable by Leibniz rule and
its derivative is
\begin{equation}
  \frac{\de}{\de t}\Sign_\varepsilon(u)
  =l_t u+ l u_t
  =
  \begin{dcases}
    \frac{u_t}{\varepsilon}&\text{a.e.\ in } A,
    \\
    -\frac{(u_t,u)}{\norm{u}_H^3}u+\frac{u_t}{\norm{u}_H}
    \quadd&\text{a.e.\ in } B.
  \end{dcases}
\end{equation}
Inequality~\eqref{eq:sign-derivative-positive} is trivial for a.a.\ $t\in A$;
on $B$ we use \pier{the} Cauchy--Schwartz inequality obtaining
\begin{equation*}
  \left(\frac{\de}{\de t}\Sign_\varepsilon(u),u_t\right)
  =
  -\frac{(u_t,u)^2}{\norm{u}_H^3}+\frac{\norm{u_t}_H^2}{\norm{u}_H}
  \geq
  -\frac{\norm{u_t}_H^2\norm{u}_H^2}{\norm{u}_H^3}
  +\frac{\norm{u_t}_H^2}{\norm{u}_H}=0.
  \qedhere
\end{equation*}
\end{proof}

At this point, taking \pier{Lemma~\ref{lem:sign-derivative} into account}, we have that
$\varphi_t$ is weakly-dif\-fe\-ren\-tiable by comparison and we can
derive equation~\eqref{eq:phaseB-faedo-galerkin} obtaining
\begin{equation}\label{eq:further-regularity-faedo-galerkinB}
  \begin{aligned}
  &\left(\partial_t^2\varphi_n-\Laplace\partial_t\varphi_n
  +\beta_\varepsilon'(\varphi_n)\partial_t\varphi_n
  +\rho\frac{\de}{\de t}\Sign_\varepsilon(\varphi_n-\varphi^*_n),v\right)=(g_4,v),
  \\&
  \quadd\forall v\in V_n,\text{ a.e. in }(0,T),
  \end{aligned}
\end{equation}
where
\begin{equation*}
  g_4:=-\pi'(\varphi_\varepsilon)\partial_t\varphi_n+
  \gamma\partial_t^2w_n
  ,\quad\text{ with }
  \norm{g_4}_{L^2(0,T;H)}\leq C(1+\varepsilon^{-1}+\rho^{1/2})
  .
\end{equation*}
We take $v=\partial_t \varphi_\varepsilon$ in
equation~\eqref{eq:further-regularity-faedo-galerkinB} and \pier{integrate} over $(0,t)$ obtaining
\begin{equation*}
  \begin{split}
    &\cfrac{1}{2}
    \norm{\partial_t\varphi_n(t)}_H^2
    +\int_0^t
    \norm{\nabla\partial_t\varphi_n}_H^2
    +\int_{Q_t}
    \beta_\varepsilon'(\varphi_n)
    |\partial_t\varphi_n|^2
    \\&
    +\rho\int_0^t((\Sign_\varepsilon(\varphi_n-\varphi_n^*))_t,
    \partial_t\varphi_n)
    \\&
    =
    \cfrac{1}{2}
    \norm{\partial_t\varphi_n(0)}_H^2
    +\int_0^t
    (g_4,\partial_t\varphi_n)
    \\&
    \leq
    \cfrac{1}{2}
    \norm{\partial_t\varphi_n(0)}_H^2
    +\cfrac{1}{2}
    \int_0^t
    \norm{\partial_t\varphi_n}_H^2
    +C(1+\varepsilon^{-2}+\rho).
  \end{split}
\end{equation*}
The inequality above differs
from~\eqref{eq:further-regularity-big-estimate} only for the presence
of the term involving the derivative of $\Sign_\varepsilon$.
This term is non-negative due to
inequality~\eqref{eq:sign-derivative-positive}.
We have to control the term $\norm{\partial_t\varphi_n(0)}_H$, which
can be treated similarly
as~\eqref{eq:further-regularity-initial-data}, with the only
difference that now we have the additional contribution
\begin{equation}
  \label{eq:further-regularity-rho}
  \rho\norm{\Sign_\varepsilon(\varphi_n-\varphi_n^*)}_H\leq \rho
\end{equation}
on the right-hand side.  Then, using~\eqref{eq:further-regularity-rho}
and applying the Gronwall lemma, we easily arrive at
\begin{equation}
  \label{eq:further-regularity-epsilonB}
  \norm{\varphi_n}_{
    W^{1,\infty}(0,T;H)
    \cap
    H^1(0,T;V)}
  \leq C(1+\varepsilon^{-1}+\rho).
\end{equation}
Possibly taking subsequences, like
in~\eqref{eq:another-convergence-varphi} we have that
\begin{equation}
  \varphi_n\to\varphi_\varepsilon
  \quadd\text{ weakly* in }
  W^{1,\infty}(0,T;H)\cap H^1(0,T;V),
\end{equation}
thus $\varphi_\varepsilon\in W^{1,\infty}(0,T;H)\cap H^1(0,T;V)$.

We \pier{rewrite} equation~\eqref{eq:phaseB-yosida} as
\begin{equation*}
  \begin{aligned}
    &
    (\partial_t\varphi_\varepsilon,v)
    +(\nabla\varphi_\varepsilon,\nabla v)
    +(\beta_\varepsilon(\varphi_\varepsilon),v)
    +\rho(\Sign_\varepsilon(\varphi_\varepsilon-\varphi^*),v)
    \\&
    =(-\pi(\varphi_\varepsilon)
    +\gamma\partial_t w_\varepsilon,v),
  \end{aligned}
\end{equation*}
\pier{derive} it, and test with $v=\partial_t\varphi_\varepsilon$.
After few calculations in the same style as before, the
estimate~\eqref{eq:further-regularity-epsilonB} improves to
\begin{equation}
\norm{\varphi_\varepsilon}_{
  W^{1,\infty}(0,T;H)
  \cap
  H^1(0,T;V)}
\leq C(1+\rho).
\end{equation}
By comparison, we find
$\norm{-\Laplace\varphi_\varepsilon+\xi_\varepsilon}_{L^\infty(0,T;H)}
\leq C(1+\rho)$.
We multiply $-\Laplace\varphi_\varepsilon+\xi_\varepsilon$ by
$-\Laplace\varphi_\varepsilon$, obtaining
\begin{equation*}
  \begin{aligned}
    &
  \norm{\Laplace\varphi_\varepsilon}_H^2
  +\int_\Omega\beta_\varepsilon'|\nabla\varphi_\varepsilon|^2
  \leq
  \norm{\Laplace\varphi_\varepsilon}_H
  \norm{-\Laplace\varphi_\varepsilon+\xi_\varepsilon}_H
  \\&
  \leq
  \frac 1 2
  \norm{\Laplace\varphi_\varepsilon}_H^2
  +C(1+\rho^2)
  \quadd
  \text{ a.e.\ in }(0,T).
  \end{aligned}
\end{equation*}
Thus, \pier{we conclude that}
$\norm{\Laplace\varphi_\varepsilon}_{L^\infty(0,T;W)}\leq C(1+\rho)$
by elliptic regularity and
$\norm{\xi_\varepsilon}_{L^\infty(0,T;H)}\leq C(1+\rho)$ by
comparison.

\section{Continuous dependence of the solutions}
\label{continuous-dependence}

We prove now Theorems~\ref{thrm:continuous-dependence}
and~\ref{thrm:continuous-dependenceB}.

\paragraph{Continuous dependence of the solutions for \problem{A}.}
In order to simplify the notation, we let
$\vartheta_0=\vartheta_{0,1}-\vartheta_{0,2}$ and analogously we
define\marginnote{ricontrollare} $w_0$, $\varphi_0$, $f$, $w$,
$\varphi$, $\xi$ and $\sigma$.  In this proof, $C$ denotes a
time-to-time-different, positive, large-enough constant independent of
the just-said data and of $\rho$.

It is clear that
\begin{align}
  &(w_t+l\varphi)_t
  -\kappa\Laplace w_t
  -\tau\Laplace w
  +\rho\sigma
  =f,
  \label{eq:heat-difference}
  \\
  &\varphi_t-\Laplace\varphi
  +\xi
  +\pi(\varphi_1)-\pi(\varphi_2)
  =\gamma w_t.
  \label{eq:phase-difference}
\end{align}
We multiply equations~\eqref{eq:heat-difference}
and~\eqref{eq:phase-difference} by $(w_t+l\varphi)$ and $\kappa
l^2\varphi$ respectively, sum up and integrate over $\Omega$ obtaining
\begin{equation*}
  \begin{split}
    &\cfrac{1}{2}
    \cfrac{\de}{\de t}
    \norm{w_t+l\varphi}_H^2
    +\kappa\norm{\nabla w_t}_H^2
    +\kappa l(\nabla w_t,\nabla\varphi)
    +\cfrac{\tau}{2}
    \cfrac{\de}{\de t}
    \norm{\nabla w}_H^2
    \\
    &+\tau l(\nabla w,\nabla\varphi)
    +\rho(\sigma,w_t+l\varphi)_H
    +\cfrac{\kappa l^2}{2}
    \cfrac{\de}{\de t}
    \norm{\varphi}_H^2
    \\&
    +\kappa l^2\norm{\nabla\varphi}_H^2
    +\kappa l^2 (\xi,\varphi)_H
    +\kappa l^2 (\pi(\varphi_1)-\pi(\varphi_2),\varphi)_H
    \\&
    =(f,w_t)+l(f,\varphi)
    +\gamma\kappa l^2
    (w_t,\varphi).
  \end{split}
\end{equation*}
We rearrange and \pier{use} the Lipschitz-continuity of $\pi$,
equations~\eqref{eq:sign} and~\eqref{eq:beta}, and the monotonicity of
$\Sign$ and $\beta$ to infer that
\begin{equation*}
  \begin{split}
    &\cfrac{1}{2}
    \cfrac{\de}{\de t}
    \norm{w_t+l\varphi}_H^2
    +\cfrac{\tau}{2}
    \cfrac{\de}{\de t}
    \norm{\nabla w}_H^2
    +\cfrac{\kappa l^2}{2}
    \cfrac{\de}{\de t}
    \norm{\varphi}_H^2
    \\&
    +\kappa(\norm{\nabla w_t}_H^2
    +l(\nabla w_t,\nabla\varphi)
    +l^2\norm{\nabla\varphi}_H^2)
    \\&
    \leq
    (f,w_t)+l(f,\varphi)
    +\gamma\kappa l^2
    (w_t,\varphi)
    +C\norm{\varphi}_H^2
    -\tau l(\nabla w,\nabla\varphi).
  \end{split}
\end{equation*}
At this point, \pier{the Young inequality and the fact that}
\begin{equation*}
l(\nabla w_t,\nabla\varphi)
\geq
-\cfrac{1}{2}
(\norm{\nabla w_t}^2_H+l^2\norm{\nabla\varphi}_H^2)
\end{equation*}
\pier{allow us} to deduce
\begin{equation*}
  \begin{split}
    &\cfrac{1}{2}
    \cfrac{\de}{\de t}
    \norm{w_t+l\varphi}_H^2
    +\cfrac{\kappa l^2}{2}
    \cfrac{\de}{\de t}
    \norm{\varphi}_H^2
    +\cfrac{\tau}{2}
    \cfrac{\de}{\de t}
    \norm{\nabla w}_H^2
    \\&
    +\frac{\kappa}{2}(\norm{\nabla w_t}_H^2
    +l^2\norm{\nabla\varphi}_H^2)
    \\&
    \leq
    C\norm{f(t)}_H^2
    +C(\norm{w_t}_H^2+\norm{\varphi}_H^2)
    +\frac{\kappa l^2}{4}\norm{\nabla\varphi}_H^2
    +\frac{\tau^2}{\kappa}\norm{\nabla w}_H^2.
  \end{split}
\end{equation*}
We integrate between $0$ and $t$
\begin{equation*}
  \begin{split}
    &\cfrac{1}{2}
    \norm{w_t(t)+l\varphi(t)}_H^2
    +\cfrac{\kappa l^2}{2}
    \norm{\varphi(t)}_H^2
    +\cfrac{\tau}{2}
    \norm{\nabla w(t)}_H^2
    \\&
    +\frac{\kappa}{2}\int_0^t\norm{\nabla w_t}_H^2
    +\cfrac{\kappa l^2}{4}\int_0^t \norm{\nabla\varphi}_H^2
    \\&
    \leq
    C\norm{f}_{L^2(0,T;H)}^2
    +\frac{1}{2}\norm{\vartheta_0+l\varphi_0}_H^2
    +\cfrac{\kappa l^2}{2}\norm{\varphi_0}_H^2
    +\cfrac{\tau}{2}\norm{\nabla w_0}_H^2
    \\&
    \quad +C\int_0^t(\norm{w_t}_H^2+\norm{\varphi}_H^2)
    +\frac{\tau^2}{\kappa}\int_0^t\norm{\nabla w}_H^2.
  \end{split}
\end{equation*}
Finally, we note that
\begin{align*}
  \frac \kappa{\kappa+2}\norm{w_t(t)}_H^2
  +\frac{\kappa l^2}{2}\norm{\varphi(t)}_H^2
  &\leq
  \norm{w_t(t)+l\varphi(t)}_H^2
  +\kappa l^2
  \norm{\varphi(t)}_H^2,
  \\
  \norm{\vartheta_0+l\varphi_0}_H^2
  +\kappa l^2
  \norm{\varphi_0}_H^2
  &\leq
  2\norm{\vartheta_0}_H^2+(\kappa+1)l^2\norm{\varphi_0}_H^2,
\end{align*}
and so we can apply the Gronwall Lemma finding
\begin{equation*}
  \begin{split}
  &\norm{w_t}_{L^\infty(0,T;H)}
  +\norm{\nabla w}_{L^\infty(0,T;H)}
  +\norm{\nabla w_t}_{L^2(0,T;H)}
  \\&
  +\norm{\varphi}_{L^\infty(0,T;H)}
  +\norm{\nabla\varphi}_{L^2(0,T;H)}
  \\&
  \leq
  C(\norm{f}_{L^2(0,T;H)}+
  \norm{\vartheta_0}_H+\norm{\varphi_0}_H
  +\norm{\nabla w_0}_H).
  \end{split}
\end{equation*}
\pier{As $w_0,\varphi_0\in V$, this implies} that
\begin{equation}
  \begin{split}
  &\norm{w}_{W^{1,\infty}(0,T;H)\cap H^1(0,T;V)}
  +\norm{\varphi}_{L^\infty(0,T;H)\cap L^2(0,T;V)}
  \\&
  \leq
  C(\norm{f}_{L^2(0,T;H)}
  +\norm{\vartheta_0}_H
  +\norm{w_0}_V
  +\norm{\varphi_0}_H
  ),
  \end{split}
\end{equation}
and the proof is complete.

We conclude this paragraph \pier{with the proof} of
Corollary~\ref{cor:uniqueness}.
Assume that $(w_i,\varphi_i,\xi_i,\sigma_i)$, $i=1, 2$, are two
solutions given by the existence Theorem~\ref{thrm:existence}.
Since $l=\alpha$ we can apply the just-proven
Theorem~\ref{thrm:continuous-dependence} with
$(\vartheta_{0,i},w_{0,i},\varphi_{0,i},f_i)=(\vartheta_{0},w_{0},\varphi_{0},f)$.
Hence, by the equation above we deduce that $w_1=w_2$ and
$\varphi_1=\varphi_2$.  By comparison, we conclude that
$\sigma_1=\sigma_2$ and $\xi_1=\xi_2$.

\paragraph{Continuous dependence of the solutions for \problem{B}.}
We do not give the proof of Theorem~\ref{thrm:continuous-dependenceB},
since \pier{it} goes along the same line of the previous one and no
further idea arises.
As before, continuous dependence implies uniqueness of the components
$w$ and $\varphi$.
If in addition $\beta$ is single-valued, $\xi$ is uniquely determined,
thus, by comparison in~\eqref{eq:phaseB}, the function
$\sigma$ is uniquely determined, as well.

\section{Existence of sliding modes}
This section is devoted to prove Theorems~\ref{thrm:sliding-mode}
and~\ref{thrm:sliding-modeB}.
In this \pier{section}, we deal with the approximated solutions
$(w_\varepsilon,\varphi_\varepsilon,\xi_\varepsilon,\sigma_\varepsilon)$
and we will take the limit as $\varepsilon\to 0$.
Before going through the proofs of the Theorems, we \pier{show} the following
Lemma.

\begin{lem}\label{lem:decreasing}
  Let $T, M>0$,\footnote{The first part of this lemma is still working
    if $M=0$.} $\psi_0\geq0$, and let $\psi:[0,T]\to\real$ be an a
  non-negative, absolutely continuous function with $\psi(0)=\psi_0$.
  Let $A$ be the set
  \begin{equation}
    A:=\{t\in[0,T]: \psi(t)>0\}.
  \end{equation}
  If $\psi'(t)\leq-M$ a.e.\ in $A$, then the following
  conclusions hold true.
  \begin{enumerate}
  \item
    If $\psi_0=0$, then $\psi\equiv0$.
  \item
    If $M>\psi_0/T$, then there exist a time $T^*\in(0,T)$ such that
    \begin{equation}
      \label{eq:lemma-on-T*}
      T^*\leq\frac{\psi_0}{M}<T,
    \end{equation}
    as well as the function $\psi$ is strictly decreasing in $[0,T^*)$
      and vanishes in $[T^*,T]$.
  \end{enumerate}
\end{lem}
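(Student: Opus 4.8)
The plan is to integrate the differential inequality $\psi'\le -M$ and to exploit the constraint $\psi\ge 0$, which prevents $\psi$ from continuing to decrease once it reaches the value $0$. I would first settle part~1. The crucial point is that $\psi'=0$ almost everywhere on the level set $Z:=\{t\in[0,T]:\psi(t)=0\}$: at a.e.\ point of $Z$ (namely, at every density point of $Z$ at which $\psi$ is differentiable, which is a.e.\ point of $Z$ by the Lebesgue density theorem and the a.e.\ differentiability of $\psi$) one may evaluate $\psi'$ along a sequence lying in $Z$, thereby obtaining the value $0$. Combined with $\psi'\le -M<0$ a.e.\ on $A=\{\psi>0\}$ and the decomposition $[0,T]=A\cup Z$, this yields $\psi'\le 0$ a.e.\ on $[0,T]$. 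Since $\psi$ is absolutely continuous with $\psi_0=0$, we get $\psi(t)=\int_0^t\psi'\le 0$, and non-negativity forces $\psi\equiv 0$. (Alternatively, one can argue on each connected component $(a,b)$ of the open set $A$: continuity and maximality give $\psi(a)=0$, whence $\psi(t)=\int_a^t\psi'\le -M(t-a)<0$ on $(a,b)$, a contradiction, so $A=\emptyset$.)

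For part~2 I assume $\psi_0>0$, which is implicitly forced by the requirement $T^*\in(0,T)$. I would define the hitting time $T^*:=\inf\,\{t\in[0,T]:\psi(t)=0\}$. Because $\psi$ is continuous the set $Z$ is closed, so either $Z=\emptyset$ or $T^*\in Z$, i.e.\ $\psi(T^*)=0$. On $[0,T^*)$ one has $\psi>0$ by definition of the infimum, hence $[0,T^*)\subseteq A$ and $\psi'\le -M$ a.e.\ there; integrating yields the linear bound $\psi(t)\le \psi_0-Mt$ on $[0,T^*)$, and by continuity also at $T^*$. If $Z$ were empty this bound would hold on all of $[0,T]$, giving $\psi(T)\le\psi_0-MT<0$ by the hypothesis $M>\psi_0/T$, contradicting $\psi\ge0$; hence $Z\neq\emptyset$ and $T^*$ is well defined. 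Evaluating the bound at $T^*$ with $\psi(T^*)=0$ gives $0\le\psi_0-MT^*$, that is $T^*\le\psi_0/M<T$, while $\psi_0>0$ and continuity at $0$ give $T^*>0$. Strict monotonicity on $[0,T^*)$ follows from $\psi(t)-\psi(s)=\int_s^t\psi'\le -M(t-s)<0$ for $0\le s<t<T^*$. Finally, the vanishing of $\psi$ on $[T^*,T]$ I would obtain by applying part~1 to the shifted function $s\mapsto\psi(T^*+s)$ on $[0,T-T^*]$, which is non-negative, absolutely continuous, starts at $0$, and still satisfies the differential inequality on its positivity set.

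I expect the main obstacle to be the measure-theoretic step in part~1, namely the justification that $\psi'=0$ a.e.\ on the level set $\{\psi=0\}$ (equivalently, the careful interplay between the open set $A$ and the closed set $Z$). Everything else reduces to integrating $\psi'\le -M$ via the fundamental theorem of calculus for absolutely continuous functions and reading off the constants.
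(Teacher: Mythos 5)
Your proof is correct and follows essentially the same route as the paper: the paper proves part~1 exactly by your parenthetical connected-component argument, and in part~2 it defines $T^*$ as the supremum of the initial interval of positivity, which coincides with your $\inf\{t\in[0,T]:\psi(t)=0\}$, then transfers the vanishing to $[T^*,T]$ by the same shift-and-apply-part-1 device. Your primary part-1 argument (that $\psi'=0$ a.e.\ on the level set $\{\psi=0\}$, hence $\psi'\le 0$ a.e.\ and $\psi(t)=\int_0^t\psi'\le 0$) is a correctly justified measure-theoretic variant, but it is not needed given the component argument you already supply.
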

\begin{proof}
  1)\quad Suppose on the contrary that $A$ is non empty. Let $B=(a,b)$
  be a connected component of $A$.  The function $\psi$ restricted to
  $B$ is strictly decreasing.  Indeed, if $a<t_0<t_1<b$, we have that
  \begin{equation*}
  \psi(t_1)-\psi(t_0)=\int_{t_0}^{t_1}\psi'(s)\de s\leq-M(t_1-t_0)<0.    
  \end{equation*}
  We now take the limit as $t_0\to a$ obtaining
  \begin{equation*}
    \psi(t_1)\leq\lim_{t_0\to a}\psi(t_0)=\psi(a)=0,
  \end{equation*}
  which is a contradiction for we assumed $\psi(t_1)>0$.

  2)\quad We may assume $\psi_0>0$, because the case $\psi_0=0$ follows
  directly from the former part with $T^*=0$.  We define $T^*$ as
  \begin{equation}
    \label{eq:sup-psi}
    T^*:=\sup\{t\in(0,T): \psi(s)>0\text{ for all }s\in(0,t)\}.
  \end{equation}
  By continuity of $\psi$, $T^*$ is well-defined and greater than $0$.
  Moreover, the interval $[0,T^*)$ is contained in $A$, hence
  we have that
  \begin{equation*}
    \psi(T^*)-\psi(0)=\int_0^{T^*}\psi'(t)\de t\leq-MT^*,
  \end{equation*}
  thus
  \begin{equation*}
    T^*\leq\frac{\psi_0-\psi(T^*)}{M}\leq\frac{\psi_0}{M}<T.
  \end{equation*}
  Note that $\psi$ is strictly decreasing in $[0,T^*)$ for what we
  have proven in 1).  It is clear that $\psi(T^*)=0$.  Indeed, if on
  the contrary $\psi(T^*)>0$, then $\psi>0$ in $[0,T^*+\varepsilon)$
  for a small $\varepsilon$ and the supremum in
  definition~\eqref{eq:sup-psi} fails.  Finally, we define
  $\delta:[0,T-T^*]\to[0,+\infty)$ as $\delta(t)=\psi(t+T^*)$ and we
  use the first part of the lemma, deducing $\delta=0$, thus
  $\psi(t)=0$ for all $t\in[T^*,T]$.
\end{proof}

\subsection{Existence of sliding modes for \problem{A}}

We define for $\varepsilon>0$, $\eta_\varepsilon,
g_{\varepsilon}:[0,T]\to H$ as
\begin{align*}
  \eta_\varepsilon&:=\partial_tw_\varepsilon
  +\alpha\varphi_\varepsilon-\eta^*,
  \\
  g_{\varepsilon}&:=\tau\Laplace w_\varepsilon
  -\kappa\alpha\Laplace\varphi_\varepsilon
  +(\alpha-l)\partial_t\varphi_\varepsilon-\kappa\Laplace\eta^*+f.
\end{align*}
Analogously, we define $\eta=\partial_tw+\alpha\varphi-\eta^*$.
Because of the
estimates~\eqref{eq:first-a-priori-estimate-without-epsilon},
\eqref{eq:laplace-varphi-l-infty},
\eqref{eq:partial-t-varphi-l-infty}, and the hypothesis on the target
function~\eqref{eq:target-function},\marginnote{forse migliorare} we
infer that $g_{\varepsilon}\in L^\infty(0,T;H)$ and
\begin{equation*}
\norm{g_{\varepsilon}}_{L^\infty(0,T;H)}\leq C_5(1+\rho^{1/2}),  
\end{equation*}
where the constant $C_5$ is defined in
equation~\eqref{eq:c5-definition}.
Recalling that
$\sigma_\varepsilon=\Sign_\varepsilon(\eta_\varepsilon)$ we rewrite
equation~\eqref{eq:heat-yosida} as
\begin{equation*}
  (\partial_t\eta_\varepsilon-\kappa\Laplace\eta_\varepsilon
  +\rho\sigma_\varepsilon,v)=(g_{\varepsilon},v)
  \quadd\forall v\in H,\text{ a.e.\ in }(0,T).
\end{equation*}
We take $v=\sigma_\varepsilon$ in the above equation and integrate
between $t$ and $t+h$ (with $h\in(0,T-t)$) obtaining
\begin{equation*}
    \begin{aligned}
      &
      \int_t^{t+h}(\partial_t\eta_\varepsilon,\sigma_\varepsilon)
      +\kappa\int_t^{t+h}(\nabla\eta_\varepsilon,\nabla\sigma_\varepsilon)
      +\rho\int_t^{t+h}\norm{\sigma_\varepsilon}_{H}^2
      =\int_t^{t+h}(g_{\varepsilon},\sigma_\varepsilon).
    \end{aligned}
\end{equation*}
Using the fact that $\Sign_\varepsilon$ is the Fr\'echet-differential
of $\norm{\cdot}_{H,\varepsilon}$, we \pier{deduce} that
\begin{equation}
  (\partial_t\eta_\varepsilon,\sigma_\varepsilon)
  =\frac{\de}{\de t}\norm{\eta_\varepsilon}_{H,\varepsilon}
  .\label{eq:norm-derivative}
\end{equation}
In view of~\eqref{eq:sign-def}, we have that
\begin{equation}
    \int_t^{t+h}(\nabla\eta_\varepsilon,\nabla\sigma_\varepsilon)=
    \int_t^{t+h}
    \frac{\norm{\nabla\eta_\varepsilon}_H^2}
         {\max\{\varepsilon,\norm{\eta_\varepsilon}_H\}}\geq 0.
    \label{eq:positive}
\end{equation}
Finally, as $\norm{\sigma_\varepsilon}_H\leq1$ a.e.\ in $(0,T)$, \pier{it follows} that
\begin{equation*}
    \int_t^{t+h}(g_{\varepsilon},\sigma_\varepsilon)\leq
    hC_5(1+\rho^{1/2}).
\end{equation*}
Putting everything together we obtain
\begin{equation}
  \label{eq:dav1}
  \norm{\eta_\varepsilon(t+h)}_{H,\varepsilon}
  -\norm{\eta_\varepsilon(t)}_{H,\varepsilon}
  +\rho\int_t^{t+h}\norm{\sigma_\varepsilon}_{H}^2
  \leq hC_5(1+\rho^{1/2}).
\end{equation}
We remark that $\eta_\varepsilon$ converges to $\eta$ in
$C^0([0,T];H)$ as $\varepsilon\to0$,
(cf.~\eqref{eq:w-convergence-c1}--\eqref{eq:phase-convergence-c0}\pier{)},
which can be replicated for the limit as $\varepsilon\to0$).
Moreover, the Yosida approximations $\norm{\cdot}_{H,\varepsilon}$
converge to $\norm{\cdot}_H$ uniformly in $H$,
by~\eqref{eq:quasi-norm-uniform}.
It follows that the first two terms of the previous inequality converge.
At this point, we take the inferior limit in~\eqref{eq:dav1} as
$\varepsilon\to 0$ and we use \pier{the} Lebesgue dominate convergence theorem,
\marginnote{non mi ricordo mai qual \`e quello
  della convergenza dominata e quale della convergenza monotona}
property~\eqref{eq:sigma-convergence}, and the weak lower
semicontinuity of norms obtaining
\begin{equation*}
  \norm{\chi(t+h)}_H
  -\norm{\chi(t)}_H
  +\rho\int_t^{t+h}\norm{\sigma}_{H}^2
  \leq hC_5(1+\rho^{1/2}).
\end{equation*}
We divide by $h$ and we take the limit as $h\to 0$
\begin{equation*}
    \frac{\de}{\de t}(\norm{\eta(t)}_H)+\rho\norm{\sigma(t)}_H^2
    \leq C_5(1+\rho^{1/2}) \quadd\text{ for a.a.\ }t\in(0,T).
\end{equation*}
We introduce the function $\psi(t)=\norm{\eta(t)}_H$ and the quantity
\begin{equation*}
  M(\rho)=\rho-C_5-C_5\rho^{1/2}.
\end{equation*}
We also set (see~\eqref{eq:psi0-def})
$\psi_0=\norm{\vartheta_0+\alpha-\eta^*}_H$.  The inequality above
implies
\begin{equation}
    \psi'(t)\leq-M(\rho),\quadd\text{ for a.a.\ $t$ in }\{t: \psi(t)>0\}.
\end{equation}
Using the Young inequality we obtain
\begin{equation}
  \label{eq:bound-on-M}
  M(\rho)\geq\frac{\rho}{2}-C_5-\frac{C_5^2}{2}.
\end{equation}
Thus, if we \pier{choose}
\begin{equation*}
\rho^*=2\left(\frac{\psi_0}{T}+C_5+\frac{C_5^2}{2}\right),
\end{equation*}
then for every $\rho>\rho^*$ \pier{it turns out that}  $M(\rho)>\psi_0/T$.

Finally we can use Lemma~\ref{lem:decreasing}, that guarantees the
existence of $T^*<T$ such that $\psi$ vanishes in $[T^*,T]$, i.e.\ the
thesis.  Moreover, the second part of the Lemma and
equation~\eqref{eq:bound-on-M} lead to
\begin{equation*}
  T^*\leq\frac{2\psi_0}{\rho-2C_5-C_5^2}<T.
\end{equation*}

\subsection{Existence of sliding modes for \problem{B}}

We define for $\varepsilon>0$, $\chi_\varepsilon,
g_{\varepsilon}:[0,T]\to H$ as
\begin{align*}
  \chi_\varepsilon&:=\varphi_\varepsilon-\varphi^*,
  \\
  g_{\varepsilon}&:=\gamma\partial_tw_\varepsilon
  -\beta_\varepsilon(\varphi^*)-\pi(\varphi_\varepsilon)+\Laplace\varphi^*.
\end{align*}
Analogously, we define $\chi=\varphi-\varphi^*$.
Like in the previous proof, we have that $g_{\varepsilon}\in
L^\infty(0,T;H)$ and
\begin{equation*}
\norm{g_{\varepsilon}}_{L^\infty(0,T;H)}\leq C_{10},
\end{equation*}
where the constant is \pier{specified in~\eqref{eq:c9-definition}}.
Recalling 
the definition of $\sigma_\varepsilon$,
we rewrite
equation~\eqref{eq:phaseB-yosida}~as
\begin{equation*}
  (\partial_t\chi_\varepsilon-\Laplace\chi_\varepsilon
  +\beta_\varepsilon(\varphi_\varepsilon)-\beta_\varepsilon(\varphi^*)
  +\rho\sigma_\varepsilon,v)=(g_{\varepsilon},v)
  \quadd\forall v\in H,\text{ a.e.\ in }(0,T).
\end{equation*}
We take $v=\sigma_\varepsilon$ in the equation above and integrate
between $t$ and $t+h$ (with $h\in(0,T-t)$) obtaining
\begin{equation*}
    \begin{aligned}
      &
      \int_t^{t+h}(\partial_t\chi_\varepsilon,\sigma_\varepsilon)
      +\kappa\int_t^{t+h}(\nabla\chi_\varepsilon,\nabla\sigma_\varepsilon)
      +\int_t^{t+h}(\beta_\varepsilon(\varphi_\varepsilon)-\beta_\varepsilon(\varphi^*),\sigma_\varepsilon)
      \\&
      +\rho\int_t^{t+h}\norm{\sigma_\varepsilon}_{H}^2
      =\int_t^{t+h}(g_{\varepsilon},\sigma_\varepsilon).
    \end{aligned}
\end{equation*}
Inequalities~\eqref{eq:norm-derivative} and~\eqref{eq:positive} are
still valid if we substitute $\eta_\varepsilon$ with
$\chi_\varepsilon$.
Moreover, using the monotonicity of $\beta_\varepsilon$, we have that
\begin{equation*}
  \int_t^{t+h}(\beta_\varepsilon(\varphi_\varepsilon)
  -\beta_\varepsilon(\varphi^*),
  \sigma_\varepsilon)=
  \int_t^{t+h}
  \frac{(\beta_\varepsilon(\varphi_\varepsilon)
    -\beta_\varepsilon(\varphi^*),\chi_\varepsilon)}
  {\max\{\varepsilon,\norm{\chi_\varepsilon}_H\}}\geq
  0.
\end{equation*}
Finally, as $\norm{\sigma_\varepsilon}_H\leq1$ a.e.\ in $(0,T)$, we
have that
\begin{equation*}
    \int_t^{t+h}(g_{\varepsilon},\sigma_\varepsilon)\leq h C_{10}.
\end{equation*}
Putting everything together we obtain
\marginnote{qui la dimostrazione \`e diversa dal caso
  precedente. penso di cambiarlo anche per il caso A}
\begin{equation}
  \label{eq:dav2}
  \norm{\chi_\varepsilon(t+h)}_{H,\varepsilon}
  -\norm{\chi_\varepsilon(t)}_{H,\varepsilon}
  +\rho\int_t^{t+h}\norm{\sigma_\varepsilon}_{H}^2
  \leq hC_{10}.
\end{equation}
For the same considerations made in the previous proof, we may take
the limit as $\varepsilon\to0$, obtaining
\begin{equation*}
  \norm{\chi(t+h)}_H
  -\norm{\chi(t)}_H
  +\rho\int_t^{t+h}\norm{\sigma}_{H}^2
  \leq hC_{10}.
\end{equation*}
We divide by $h$ and, using the Lebesgue differentiation theorem, we
take the limit as $h\to 0$
\begin{equation*}
    \frac{\de}{\de t}\norm{\chi(t)}_H+\rho\norm{\sigma(t)}_H^2
    \leq C_{10} \quadd\text{ for a.a.\ }t\in(0,T).
\end{equation*}
We introduce the function $\psi(t)=\norm{\chi(t)}_H$ and the quantity
\begin{equation*}
  M(\rho)=\rho-C_{10}.
\end{equation*}
We also set $\psi_0=\norm{\varphi_0-\varphi^*}_H$
(see~\eqref{eq:psi0-definitionB}).
Clearly, \pier{it holds that}
\begin{equation}
  \psi'(t)\leq-M(\rho),\quadd\text{ for a.a.\ $t$ in }\{t: \psi(t)>0\},
\end{equation}
so we can \pier{choose}
\begin{equation*}
\rho^*=\frac{\psi_0}{T}+C_{10},
\end{equation*}
\pier{and consequently} $M(\rho)>\psi_0/T$, for every $\rho>\rho^*$.

Finally we can use Lemma~\ref{lem:decreasing}, that guarantees the
existence of $T^*<T$ such that $\psi$ vanishes in $[T^*,T]$, i.e.\ the
thesis.  Moreover, the second part of the Lemma leads to
\begin{equation*}
  T^*\leq\frac{\psi_0}{\rho-C_{10}}<T.
\end{equation*}

\pcol{\subsection*{Acknowledgments}
The current contribution originated from the work done by
Davide Manini for the preparation of his master thesis, which 
has been discussed at the University of Pavia on July 2019.
Actually, the paper turns out to offer some extension to the 
results there contained.
The research of Pierluigi Colli is supported by the Italian Ministry of Education,
University and Research~(MIUR): Dipartimenti di Eccellenza Program (2018--2022)
-- Dept.~of Mathematics ``F.~Casorati'', University of Pavia.
In addition, PC gratefully acknowledges some other
support from the GNAMPA (Gruppo Nazionale per l'Analisi Matematica,
la Probabilit\`a e le loro Applicazioni) of INdAM (Istituto
Nazionale di Alta Matematica) and the IMATI -- C.N.R. Pavia, Italy.}

\vspace{3truemm}


\begin{thebibliography}{99}

\bibitem{barbu10}
{\sc V.~Barbu}, {\em Nonlinear differential equations of monotone types in
  {B}anach spaces}, Springer Monographs in Mathematics, Springer, New York,
  2010.

\bibitem{bcgmr}
{\sc V.~Barbu, P.~Colli, G.~Gilardi, G.~Marinoschi, and E.~Rocca}, {\em
  Sli\-ding mode control for a nonlinear phase-field system}, SIAM J. Control
  Optim., 55 (2017), pp. 2108--2133.

\bibitem{bfpu08}
{\sc G.~Bartolini, L.~Fridman, A.~Pisano, and E.~Usai, eds.}, {\em Modern
  sliding mode control theory new perspectives and applications}, Lecture Notes
  in Control and Inform. Sci.  375, Springer, Berlin, 2008.

\bibitem{brezis73}
{\sc H.~Br\'{e}zis}, {\em Op\'{e}rateurs maximaux monotones et semi-groupes de
  contractions dans les espaces de {H}ilbert}, North-Holland, Amsterdam, 1973.

\bibitem{brezis11}
{\sc H.~Br\'{e}zis}, {\em Functional analysis, {S}obolev spaces and partial
  differential equations}, Universitext, Springer, New York, 2011.

\bibitem{caginalp86}
{\sc G.~Caginalp}, {\em An analysis of a phase field model of a free boundary},
  Arch. Rational Mech. Anal., 92 (1986), pp. 205--245.

\bibitem{canevari-colli12}
{\sc G.~Canevari and P.~Colli}, {\em Solvability and asymptotic analysis of a
  ge\-ne\-ra\-li\-za\-tion of the {C}aginalp phase field system}, Commun. Pure
  Appl. Anal., 11 (2012), pp. 1959--1982.

\bibitem{canevari-colli13}
{\sc G.~Canevari and P.~Colli}, {\em Convergence properties for a
  ge\-ne\-ra\-li\-za\-tion of the {C}aginalp phase field system}, Asymptot.
  Anal., 82 (2013), pp. 139--162.

\bibitem{cem}
{\sc C.~Cavaterra, D.~En\u{a}chescu, and G.~Marinoschi}, {\em Sliding mode
  control of the {H}odgkin-{H}uxley mathematical model}, Evol. Equ. Control
  Theory, 8 (2019), pp. 883--902.

\bibitem{crs11}
{\sc M.-B. Cheng, V.~Radisavljevic, and W.-C. Su}, {\em Sliding mode boundary
  control of a parabolic {PDE} system with parameter variations and boundary
  uncertainties}, Automatica J. IFAC, 47 (2011), pp. 381--387.

\bibitem{cgmr}
{\sc P.~Colli, G.~Gilardi, G.~Marinoschi, and E.~Rocca}, {\em Sliding mode
  control for a phase field system related to tumor growth}, Appl. Math.
  Optim., 79 (2019), pp. 647--670.

\bibitem{colturato}
{\sc M.~Colturato}, {\em On a class of conserved phase field systems with a
  maximal monotone perturbation}, Appl. Math. Optim., 78 (2018), pp. 545--585.

\bibitem{dalmaso93}
{\sc G.~Dal~Maso}, {\em An introduction to {$\Gamma$}-convergence}, Progress in
  Nonlinear Differential Equations and their Applications  8, Birkh\"{a}user
  Boston, Inc., Boston, MA, 1993.

\bibitem{eff06}
{\sc C.~Edwards, E.~Fossas~Colet, and L.~Fridman, eds.}, {\em Advances in
  variable structure and sliding mode control}, Lecture Notes in Control and
  Inform. Sci.  334, Springer, Berlin, 2006.

\bibitem{evans98}
{\sc L.~C. Evans}, {\em Partial differential equations}, Grad. Stud. Math.  19,
  American Mathematical Society, Providence, 1998.

\bibitem{fji11}
{\sc L.~Fridman, J.~Moreno, and R.~Iriarte, eds.}, {\em Sliding modes after the
  first decade of the 21st century: state of the art}, Lecture Notes in Control
  and Inform. Sci.  412, Springer, Berlin, 2011.

\bibitem{green-naghdi91}
{\sc A.~E. Green and P.~M. Naghdi}, {\em A re-examination of the basic
  postulates of thermomechanics}, Proc. Roy. Soc. London Ser. A, 432 (1991),
  pp. 171--194.

\bibitem{green-naghdi92}
{\sc A.~E. Green and P.~M. Naghdi}, {\em On undamped heat waves in an elastic
  solid}, J. Thermal Stresses, 15 (1992), pp. 253--264.

\bibitem{green-naghdi93}
{\sc A.~E. Green and P.~M. Naghdi}, {\em Thermoelasticity without energy
  dissipation}, J. Elasticity, 31 (1993), pp. 189--208.

\bibitem{levaggi02}
{\sc L.~Levaggi}, {\em {Infinite dimensional systems' sliding motions}}, {Eur.
  J. Control}, {8} ({2002}), pp. {508--516}.

\bibitem{levaggi13}
{\sc L.~Levaggi}, {\em Existence of sliding motions for nonlinear evolution
  equations in {B}anach spaces}, Discrete Contin. Dyn. Syst., 8 (2013), pp.
  477--487.

\bibitem{orlov-utkin98}
{\sc Y.~Orlov and V.~I. Utkin}, {\em Unit sliding mode control in
  infinite-dimensional systems}, Appl. Math. Comput. Sci., 8 (1998), pp. 7--20.
\newblock Adaptive learning and control using sliding modes.

\bibitem{orlov00}
{\sc Y.~V. Orlov}, {\em Discontinuous unit feedback control of uncertain
  infinite-dimensional systems}, IEEE Trans. Automat. Control, 45 (2000), pp.
  834--843.

\bibitem{orlov-utkin83}
{\sc Y.~V. Orlov and V.~I. Utkin}, {\em Use of sliding modes in distributed
  system control problems}, Automat. Remote Control, 43 (1983), pp. 1127--1135.

\bibitem{orlov-utkin87}
{\sc Y.~V. Orlov and V.~I. Utkin}, {\em Sliding mode control in
  indefinite-dimensional systems}, Automatica J. IFAC, 23 (1987), pp. 753--757.

\bibitem{podio-guidugli09}
{\sc P.~Podio-Guidugli}, {\em A virtual power format for thermomechanics},
  Contin. Mech. Thermodyn., 20 (2009), pp. 479--487.

\bibitem{rockafellar70}
{\sc R.~T. Rockafellar}, {\em Convex analysis}, Princeton Mathematical Series,
  Princeton University Press, Princeton, 1970.

\bibitem{simon87}
{\sc J.~Simon}, {\em Compact sets in the space {$L^p(0,T;B)$}}, Ann. Mat. Pura
  Appl. (4), 146 (1987), pp. 65--96.

\bibitem{utkin92}
{\sc V.~I. Utkin}, {\em Sliding modes in control and optimization}, Comm.
  Control Engrg. Ser., Springer, Berlin, 1992.

\bibitem{xlgk13}
{\sc H.~Xing, D.~Li, C.~Gao, and Y.~Kao}, {\em Delay-independent sliding mode
  control for a class of quasi-linear parabolic distributed parameter systems
  with time-varying delay}, J. Franklin Inst., 350 (2013), pp. 397--418.

\bibitem{young-ozguner93}
{\sc K.~D. Young and U.~\"{O}zg\"{u}ner, eds.}, {\em Variable structure
  systems, sliding mode and nonlinear control}, Lecture Notes in Control and
  Inform. Sci.  247, Springer, London, 1999.

\end{thebibliography}
\end{document}